\newtheorem{remark}{Remark}
\DeclareMathOperator{\SL}{SL}
\DeclareMathOperator{\GL}{GL}
\newcommand{\A}{\mathbb{A}}
\newcommand{\R}{\mathbb{R}}
\newcommand{\Z}{\mathbb{Z}}
\newcommand{\C}{\mathbb{C}}
\newcommand{\Q}{\mathbb{Q}}
\newcommand{\bsm}{\left(\begin{smallmatrix}}
\newcommand{\esm}{\end{smallmatrix}\right)}
\newcommand{\bpm}{\begin{pmatrix}}
\newcommand{\epm}{\end{pmatrix}}
\newtheorem{theorem}{Theorem}[section]
\newtheorem{lemma}[theorem]{Lemma}
\newtheorem{corollary}[theorem]{Corollary}
\newtheorem{proposition}[theorem]{Proposition}
\renewcommand{\Re}{\operatorname{Re}}
\newcommand{\inv}{^{-1}}
\renewcommand{\d}{\,\mathrm{d}}
\newcommand{\sums}{\sideset{}{^*}\sum}
\DeclareMathOperator{\diag}{diag}
\renewcommand{\th}{\textsuperscript{th}}
\renewcommand{\vec}[1]{\mathbf{#1}}
\renewcommand{\S}{\mathscr{S}}
\DeclareMathOperator{\U}{U}
\newcommand{\ea}[2]{e_\alpha \left(\frac{#1}{#2}\right)}
\newcommand{\eb}[2]{e_\beta \left(\frac{#1}{#2}\right)}
\title{Parametrization of Kloosterman Sets and $\SL_3$-Kloosterman Sums}
\author{Eren Mehmet K{\i}ral\textsuperscript{1}, Maki Nakasuji\textsuperscript{2}}
\numberwithin{equation}{section}
\thanks{1. Supported by Grant-in-Aid for JSPS Research Fellow 18F18326}
\thanks{2. Supported by Grant-in-Aid for Scientific Research (C) 18K03223}
\address{Department of Information and Communication Science, Faculty of Science, \\ Sophia University, \\
7-1 Kioi-cho, Chiyoda-ku, Tokyo, 102-8554, Japan}
\email{erenmehmetkiral@protonmail.com}
\email{nakasuji@sophia.ac.jp}
\keywords{Kloosterman sums, Bott-Samelson factorization, Weyl Group, reduced word decomposition, Bruggeman-Kuznetsov trace formula.}
\subjclass[2010]{11L05 (primary), and 11F22, 11F72 (secondary).} 
\begin{document}

\begin{abstract}
We stratify the $\SL_3$ big cell Kloosterman sets using the reduced word decomposition of the Weyl group element, inspired by the Bott-Samelson factorization. Thus the $\SL_3$ long word Kloosterman sum is decomposed into finer parts, and we write it as a finite sum of a product of two classical Kloosterman sums. The fine Kloosterman sums end up being the correct pieces to consider in the Bruggeman-Kuznetsov trace formula on the congruence subgroup $\Gamma_0(N)\subseteq \SL_3(\Z)$. Another application is a new explicit formula, expressing the triple divisor sum function in terms of a double Dirichlet series of exponential sums, generalizing Ramanujan's formula.   
\end{abstract}
\maketitle

\section{Introduction}
\subsection{Statement of results} 

A Kloosterman set of the ``\emph{big cell}'' in $\SL_3$ is
\begin{equation} \label{eq:OmegaCoarseDefinition}
	\Omega = \Omega_{w_0}(c_1, c_2) = \left\{A \in \SL_3(\Z): A \in \U_3(\Z) w_0 \bsm c_1 && \\ &\frac{c_2}{c_1} & \\ &&\frac{1}{c_2} \esm  \U_3(\Z) \right\},
\end{equation}
where $\U_3$ is the group of $3\times 3$ unipotent matrices and $c_1, c_2$ are nonzero integers. The long word $\SL_3$ Kloosterman sum with modulus $\vec{c} = (c_1,c_2)$ can be described as a sum over $\Omega_{w_0}(c_1,c_2)$. In this paper we give a finer decomposition of \eqref{eq:OmegaCoarseDefinition} via the sets $\Omega(d_1,d_2,f)$ defined in  \eqref{eq:FineKloostermanSetGL3}. This finer decomposition is insipired by a reduced word decomposition of $w_0$ and the subsequent Bott-Samelson factorization of flag varieties.

This stratification gives a decomposition of the long word $\SL_3$ Kloosterman sum into what we call \emph{fine Kloosterman sums}. In order to distinguish it, we denote by script $\mathscr{S}_w$: 
\begin{equation} \label{eq:FineCoarseKloostermanDecomposition}
S_{w_0} (\vec{m},\vec{n}; (c_1,c_2)) = \sum_{f | \gcd(c_1, c_2) } \S_{w_0} \left(\vec{m}, \vec{n}; \frac{c_1}{f}, \frac{c_2}{f}, f\right).
\end{equation}
We parametrize $\Omega(d_1,d_2,f)$, thus obtaining nice expressions for  $\S_{w_0} (\vec{m}, \vec{n}; d_1,d_2,f)$.

Using the parametrization from Corollary \ref{cor:FineKloostermanCellBijection} we give the following presentation of the long word $\SL_3$ Kloosterman sum as a sum of a product of two classical $\SL_2$-Kloosterman sums in  Theorem \ref{thm:KloostermanEvaluation}.  
\begin{equation}\label{eq:FineKloostermanEvaluation}
  \S_{w_0} (\vec{m}, \vec{n}; d_1,d_2,f) = f \sum_{\substack{x, y \pmod{f} \\ xy \equiv 1 \pmod{f}\\ m_2d_2 + n_2d_1y \equiv 0 \pmod{f}}}S(n_1,N(y);d_1) S(m_1,M(x);d_2).
\end{equation}
Here  $N = N(y) := (m_2d_2 + n_2d_1 y)/f$ and $M  = M(x) := (m_2d_2x + n_2 d_1)/f$.
When $f = 1$ this simplifies to $S(n_1, m_2d_2; d_1) S(m_1,n_2d_1; d_2)$, or for the more general case of $\gcd(f,d_1d_2) = 1$, see Proposition \ref{prop:coprimeSimplification}.

Note that \eqref{eq:FineCoarseKloostermanDecomposition} and  \eqref{eq:FineKloostermanEvaluation} together give us that $S_{w_0}(\vec{m}, \vec{n}, (c_1,c_2) )$ lies in a real algebraic number field. In fact it lies in a compositum of fields of the form $\Q(\cos\big(\tfrac{2\pi }{p^k}\big))$, for various primes $p$ and integers $k$.

Another application is the following explicit formula for the triple divisor sum.
\begin{equation}\label{eq:divisorFormula}
	\sigma_{1-s_1, 1-s_2}(1,n) = \zeta(s_1) \zeta(s_2) \zeta(s_1 + s_2 -1) \sum_{d_1, d_2 = 1}^\infty \frac{\mu(d_1)}{d_1^{s_1}d_2^{s_2}} \sum_{f | n d_1} \frac{c_{f}(n) c_{d_2} \left(\frac{n d_1}{f}\right)}{f^{s_1 + s_2 -1}}.
\end{equation}
Here $c_q(n) = \sum_{\substack{u\pmod{q}\\ \gcd(u,q) =1}} e(un/q)$ is the classical Ramanujan sum modulo $q$ and $\sigma_{\nu_1,\nu_2}(1,n)$ is defined in \eqref{eq:divisorSumDefinition}, and $\Re(s_1), \Re(s_2)>1$ ensures convergence of the right hand side. This is a direct generalization of the Ramanujan formula 
\begin{equation}\label{eq:ramanujanFormula}
\sigma_{1-s}(n) = \zeta(s) \sum_{q = 1}^\infty \frac{S(0,n;q)}{q^s}  \qquad \text{ with } \Re(s) >1,
\end{equation}
and the right hand side of \eqref{eq:divisorFormula} is 
\[
	\zeta(s_1) \zeta(s_2) \zeta(s_1 + s_2-1) \sum_{c_1, c_2 >0} \frac{S_{w_0}( (0,0),(1,n) ; (c_1,c_2)) }{c_1^{s_1} c_2^{s_2}}, 
\]
written according to \eqref{eq:FineCoarseKloostermanDecomposition} and \eqref{eq:FineKloostermanEvaluation}. In short, just as the $\SL_2$ case, the $\SL_3$ Ramanujan formula is a consequence of evaluating the $(1,n)$\th\ Fourier coefficient of an Eisenstein series as a double Dirichlet series. However in Section \ref{sec:RamanujanFormula} we also give a direct and elementary proof of this formula.

Our work is motivated by aesthetic considerations, believing that a beautiful expression for a Kloosterman sum would increase its recognizability when encountered elsewhere in nature, and its comprehensibility. This goal was shared by \cite{KiralYoungKloosterman}, explicitizing the rank one Kloosterman sums with respect to various cusp-pairs. We hope that similarly this work encourages the use of the explicit form of the Kloosterman sum, and leads to deeper results, better bounds and discovery of new identities for moments of $L$-functions.

$\SL_3$ Kloosterman sums have been previously been calculated. In the seminal work of \cite{BumpFriedbergGoldfeld}, the authors used Pl\"ucker coordinates to parametrize the double cosets of the Bruhat cells of $\SL_3$. 
This formulation has recently has been used in myriad applications, especially in the context of $\SL_3$ Kuznetsov trace formula, see \cite{BlomerApplications}, \cite{BlomerButtcaneMaga}, \cite{GoldfeldKontorovich}, \cite{YoungBilinearSL3}, \cite{BlomerButtcaneGlobalDecomposition}.

In most of these works, the authors are content to provide an upper bound for the Kloosterman sums when they arise. The upper bounds have been given by Larsen for the Weyl group elements $w$ of length two in the appendix of \cite{BumpFriedbergGoldfeld}, and by Stevens for the long word in \cite{Stevens}. In Section \ref{sec:KloostermanBound} we provide an upper bound for the long word Kloosterman sum, that is essentially as strong as the one by Stevens \cite{Stevens}, and stronger in its $\vec{n}, \vec{m}$ dependency. Here, our use of the word essentially refers to the fact that our bound differs only by a factor $\tau(\gcd(c_1,c_2))$ with $\tau(n) = \sum_{d|n} 1$.

Finally we should note that our stratification encodes the level structure in a simple manner. The fine Kloosterman sums appearing in Bruggeman-Kuznetsov trace formula for the congruence  group $\Gamma_0(N)$ are exactly those with $N |f$. This is a simple condition, which implies $N | c_1$ and $N|c_2$ in the notation of \eqref{eq:FineCoarseKloostermanDecomposition}, but is not conversely implied by it. For the convenience of the reader, we write down the $\Gamma_0(N) \subseteq \SL_3(\Z)$ Bruggeman-Kuznetsov formula using fine Kloosterman sums in Section \ref{sec:Kuznetsov}.

\subsection{The historical background and the previous literature}
The exponential sum
\[
	S(m,n;c) := \sum_{\substack{a, d \pmod{c} \\ ad\equiv 1 \pmod{c}}} e\left(\frac{ma}{c} + \frac{nd}{c} \right),
\]
is called the classical Kloosterman sum, first introduced by H. D. Kloosterman in \cite{KloostermanOriginal} in the context of bounding the error term arising from the circle method of G.H. Hardy, J. E. Littlewood and S. Ramanujan \cite{HardyLittlewoodWaring, HardyRamanujanPartition}. Here we use the notation $e(z) = e^{2\pi i z}$, for $z \in \C$.

A second context in which the Kloosterman sums appear involves exponential sums over $\gamma  =\bsm a & b \\c & d\esm \in \SL_2(\Z)$,
for example in the computation of the Fourier coefficients of the classical Poincar\'{e} series.

In this second context a connection to the spectral theory of automorphic forms is forged. The connection is due to the presence of Kloosterman sums on the \emph{geometric side} of the Petersson and Bruggeman-Kuznetsov trace formulas.\footnote{In \cite{KloostermanOriginal}, the use of $\theta$-function as a generating function for the set of square integers also provided a connection to modular forms.} 
The spectral theory of automorphic forms is central estimating $L$-function moments, obtaining hyperbolic equidistribution results, quantum ergodicity on hyperbolic spaces, as well as being fascinating area of investigation in and of itself. For $\SL_2$ automorphic forms, the Bruggeman-Kuznetsov/Petersson trace formulas have been the workhorse of virtually any result in analytic number theory concerning a family of automorphic forms and $L$-functions.

Given the central importance of Kloosterman sums in the rank 1 theory, attention has also turned to the explicit calculation of both the Kloosterman sums associated to $\SL_r$ such as in \cite{Friedberg1987}, \cite{Stevens}, and the integral transforms in the higher rank Bruggeman-Kuznetsov formula, see \cite{buttcaneSpectral}. For work on the Kuznetsov formula on $\SL_3$ see also the work of Buttcane \cite{buttcane2019arithmetic} and Blomer, Buttcane and Maga \cite{BlomerButtcaneMaga} where they include the level structure. In this paper our focus is on the explicit computation of the $\SL_3$ Kloosterman sums.

The work of \cite{Friedberg1987} notices the general rank $r$ hyperkloosterman sum as the Kloosterman sum associated to the cyclic element $(12\cdots r)$ of the Weyl group $\operatorname{Sym}_r$ of $\SL_r$. Our work shares the use of the exterior algebra in determining the coordinates of various factorizations. 

\subsection{Method of Proof}

Our calculation is heavily influenced by, but does not directly use, the Bott-Samelson decomposition of a flag variety. We saw this approach first in the work of Brubaker and Friedberg in \cite{BrubakerFriedberg}, in the context of calculating the Fourier coefficients of metaplectic Eisenstein series. Given a Weyl group element $w$ and $w = s_{\alpha_1} \cdots s_{\alpha_\ell}$ a reduced word decomposition of $w$, we can write 
\begin{equation}\label{eq:BwBreducedWordDecomposition}
	BwB = (Bs_{\alpha_1}B) (Bs_{\alpha_2} B) \cdots (Bs_{\alpha_\ell} B).
\end{equation}
In fact we can accomplish this in quite a generality, see \cite{GarrettBuildings}. Our approach in this work is to find the necessary conditions such that given an $A \in BwB \cap \SL_r(\Z)$, we can write
$$\iota_{\alpha_1}(\gamma_1) \cdots \iota_{\alpha_\ell}(\gamma_\ell) \in \Gamma_\infty A \Gamma_\infty,$$ where $\gamma_i \in \SL_2$, in the big cell, i.e.\ with a nonzero lower left entry. It would be simplest if we could independently choose each $\gamma_i \in \U_2(\Z) \backslash B \bsm &-1\\ 1& \esm B \cap \SL_2(\Z)/ \U_2(\Z)$. However, the reality is subtler. In this paper, we work out the various integrality conditions and the interdependencies among the $\gamma_i$'s. 

For $\SL_3(\R)$, and $w = w_0= s_\alpha s_\beta s_\alpha$ the long word Weyl group element, the lower left entries of $\gamma_i$ form a triple of nonzero integers $d_2$, $f$ and $d_1$ respectively. Stratifying the $\U_3(\Z)$--double cosets of $Bw_0B \cap\SL_3(\Z)$ according to these triples of integers turns out to give an easily parametrizable set. We then write the long word $\SL_3(\Z)$ Kloosterman sum separately as sums over these strata. The end result is \eqref{eq:FineKloostermanEvaluation}.

\subsection{Discussion}

Historically Kloosterman introduced his sum \cite{KloostermanOriginal}, in the context of the circle method applied to the sum of four squares. The problem had no Bruhat decomposition in sight. This coincidence can be exploited to find yet another connection between automorphic forms and the study of integer points on algebraic surfaces \cite{Steiner}. An understandable formula for a $\SL_3$ (or higher rank) Kloosterman sum may allow researchers to recognize Kloosterman sums when they see them in their research. Thus, for the researchers working on more complicated problems involving the circle method, the exponential sums they obtain may signal to them that the there may be a connection to higher rank automorphic forms. 

We expect that our detailed investigation into the structure of the higher rank Kloosterman sums will also lead to a finer understanding of higher rank automorphic forms. As an example, recently there has been a flurry of activity in spectral reciprocity formulas, see \cite{BlomerLiMillerSpectral}, \cite{BlomerKhanTwistedFourth},  \cite{AndersenKiral}, \cite{Zacharias}, \cite{PetrowTwistedMotohashi} and of course the seminal work of Motohashi \cite{Motohashi93}. These are formulas where both sides contain a moment, or a twisted moment of a family of $L$-functions with possibly some correction terms.  One way to obtain these results is to pass from either side, perhaps via a trace formula, to a sum of exponential sums and connect these exponential sums. At this step precise and practical knowledge of the exponential sums is necessary. Great insight is to be gained from finding connections between various moments. 

In a more straightforward way we also expect our results to be useful in the spectral theory of higher rank automorphic forms. Even though there have been deep results concerning higher rank automorphic forms, see \cite{LiGL3GL2}, \cite{BlomerLiMillerSpectral}, \cite{LiYoung}, these have all used the $\SL_2$ spectral theory and Bruggeman Kuznetsov formula. The notable exceptions to these are \cite{BlomerApplications}, \cite{BlomerButtcaneMaga}, and \cite{YoungBilinearSL3} where the sums are over $\SL_3$ automorphic forms.  Except for the last one, the works have not gone into the \emph{guts} of the geometric exponential sums, but rather used bounds.

Also we can use the methods of this paper to consider the metaplectic case. As noted in \cite{BrubakerFriedberg} and \cite{BBFAnnalsEisenstein} the decomposition of $A = \prod_{i=1}^r \iota_{\alpha_i}(\bsm a_i&b_i\\ c_i&d_i\esm)$ helps us easily write the Kubota symbol $\kappa(A)$ using $n$\th\ power residue symbols $\big(\tfrac{d_i}{c_i}\big)_n$ multiplicatively.

In \cite[Chapter 5.4, p.215]{MotohashiBook} Motohashi has noted that just as the Ramanujan formula for the divisor function was used in an essential manner in obtaining the spectral formula for the fourth moment of the Riemann zeta function in \cite{Motohashi93}, its generalization for the triple divisor function forms a connection between the sixth moment of the Riemann zeta function and the $\SL_3(\Z)$ theory, and continues to emphasize that \emph{``\ldots it is highly desirable to have an honest extension to $\SL(3, \Z)$ of the theory developed in Chapters 1-3''}. Bump in \cite{BumpGL3} has found such a formula, as Motohashi notes, even though this establishes the connection to the $\SL_3(\Z)$ theory, the exact form of the divisor formula was not amenable to concrete calculations.

Notice that for $s_1 = s_2 = 1$ the left hand side of \eqref{eq:divisorFormula} is the triple divisor function $\tau_3(n) = \sum_{n_1n_2n_3 = n} 1$.
Our formula gives a way to \emph{expand} $\tau_3(n)$ into a double Dirichlet series of exponential sums, which hopefully can be useful in separating additive terms that appear in shifted convolution sums such as $\sum_{n \ll X} \tau_3(n) \tau_3(n+h)$. 

\vspace{3mm}

{\bf Acknowledgements.} We would like to thank RIKEN iTHEMS for providing the first author with office space. We also would like to thank Matthew Young, Valentin Blomer and Daniel Bump for useful comments. We would especially like to thank Solomon Friedberg and Jack Buttcane for catching a significant error in an earlier draft.

\section{Notation and Background} \label{sec:notation}

The group of $r\times r$ unipotent matrices, i.e.\ upper triangular matrices with $1$'s on the diagonal entries, is denoted by $\U_r$. Let $T_r$ be the torus subgroup, i.e. diagonal matrices in $\SL_r$. We will drop the $r$ from the notation when we fix its value throughout a section. Call $\Gamma_\infty = \U(\Z) $.

We refer the reader to \cite[Chapter 18]{Springer} for background on root systems of reductive algebraic groups. For our purposes we will only be concerned with classical semisimple Lie groups of type $A$, i.e. $\SL_r$. given a positive root $\alpha$ we let $\iota_\alpha: \SL_2(\R) \to \SL_r(\R)$ be the canonical morphism onto the subgroup $G_\alpha\subseteq \SL_r(\R)$ which is a rank one subgroup with root system spanned by $\alpha$.

In the case of $\SL_3$, with the simple roots $\alpha = (1, -1, 0)$ and $\beta = (0, 1, -1)$, and $\gamma = \bsm a & b \\c & d \esm \in \GL_2$,
\[
	\iota_{\alpha}\left(\gamma \right) = \bpm a& b&\\ c&d&\\ &&1\epm, \qquad 	
	\iota_{\beta}\left(\gamma \right) = \bpm 1&&\\ &a&b\\ &c&d \epm,
	\qquad   
    \iota_{\alpha + \beta}\left(\gamma  \right) = \bpm a& &b\\ &1&\\ c&&d \epm.  
\]
Throughout the paper, unwritten coordinates in a matrix are assumed to be $0$. 

Going back to the general case, for any root $\alpha$, 
\begin{align*}
	e_\alpha(x) = \iota_\alpha\left(\bpm 1&x \\ 0&1 \epm\right) , &\quad 	e_{-\alpha}(x) = \iota_\alpha\left(\bpm 1&0 \\ x&1 \epm\right), \\
	h_\alpha(c)  = \iota_\alpha\left(\bpm c&0 \\ 0&1/c \epm\right), &\quad s_\alpha= \iota_\alpha\left(\bpm 0&-1 \\ 1&0 \epm\right), 
\end{align*}
and note that $s_\alpha$ corresponds to the Weyl element reflecting $\alpha$ to $-\alpha$. 

The Weyl group of $\SL_r$ can be identified with the symmetric group on $r$ letters. The simple roots correspond to adjacent transpositions, i.e. $(i, i + 1)$. By abuse of notation we will denote a Weyl group element by the same letter $w$ and use it in both as an element in $\operatorname{Sym}_r$ and the matrix $\prod_{j=1}^\ell \iota_{\alpha_j}\left(\bsm 0&-1 \\ 1 & 0 \esm\right)$ where the order and the terms of the product are taken as in the word decomposition $w = s_{\alpha_1}s_{\alpha_2} \cdots s_{\alpha_\ell}$. If this is a reduced word decomposition, i.e. $\alpha_i$ are simple roots and this product minimal among such representations, then $\ell = \ell(w)$ is called the length of $w$. 

Given two $r \times r$ matrices $A$ and $B$ we will write $A\sim B$ to mean that there are $u_1, u_2 \in \Gamma_\infty$ such that $A = u_1 B u_2$.

Given an $r \times r$ square matrix if $I, J \subseteq \{ 1,2, \ldots, r\}$ of equal size we denote by $M_{I,J}$ the minor determinant obtained by taking the entries in the columns in $I$ and rows in the set $J$. In case $|I| = |J| = 1$ these are simply the entires of the matrix $A$, and we denote them by $A_{ij}$ and in case we have $I = \{i\}^c, J= \{j\}^c$, i.e. complements of a singleton then we denote the minor via $M_{ij}$. 

Let $V$ be an $r$ dimensional vector space, with $\vec{e}_1, \ldots, \vec{e}_r$ as standard basis vectors. Given an element $A \in \GL_r$ the action of $A$ on elements of the $k$-fold wedge product are defined as
\[
	A(v_1 \wedge v_2 \wedge \cdots \wedge v_k) = (Av_1) \wedge (Av_2) \wedge \cdots \wedge (Av_k).
\]
For subsets $I = \{i_1 < i_2 < \ldots < i_k\} \subseteq \{1, \ldots, r\}$ the vectors $\vec{e}_I:= \vec{e}_{i_1} \wedge \cdots \wedge \vec{e}_{i_k}$, form a basis of $\bigwedge^k V$. The action of $A$ is calculated explicitly via the minors as,
\[
	A \vec{e}_I = \sum_{\substack{J \subseteq \{1, \ldots, r\}\\ |J|=k}} M_{I, J} \vec{e}_J.
\] 
Writing $\vec{e}_J^* := \vec{e}_{j_1}^*\wedge \vec{e}_{j_2}^* \wedge \cdots \wedge \vec{e}_{j_k}^* \in (\bigwedge^k V)^* \cong \bigwedge^k V^*$, where $\vec{e}_1^*, \ldots, \vec{e}_r^*$ are the dual standard basis elements of $V^*$, we can also write $M_{I,J} =\langle \vec{e}_{J}^*, A\vec{e}_I \rangle.$

\section{Coordinates of the Bruhat decomposition} \label{sec:BruhatCoordinates}

In this section given $A \in  \U w T \U$ with decomposition written as 
\begin{equation}\label{eq:BwBdecomposition}
	A = \bpm 1& a_{12} & a_{13} & \cdots & a_{1r}\\ 
	&1&a_{23}& \cdots & a_{2r} \\
	&&\ddots &  &\vdots\\
	&&&&1 \epm 
	w \bpm t_1&&&\\&t_2& \\ && \ddots & \\ &&&t_r \epm 
	\bpm 1& b_{12} & b_{13} & \cdots & b_{1r}\\ 
	&1&b_{23}& \cdots & b_{2r} \\
	&&1& \vdots &\vdots\\
	&&&&1 \epm ,
\end{equation}  
we understand the coordinates $a_{ij}, b_{ij}$ and $t_i$, in terms of the entries of $A$. We use the action of $A$ on the exterior algebra $\bigoplus_{k = 0}^r \bigwedge^k V$ in order to write the parameters in terms of various ratios of minors of $A$. If we assume $A$ is an integral matrix, then this also gives information on the integrality of these coordinates, since all minors $M_{I,J} \in \Z$.

Let $$w_0 = \left(\begin{matrix} 1 & 2& 3 & \cdots & (r-1) & r \\ \downarrow & \downarrow & \downarrow & \cdots &\downarrow & \downarrow \\ r & (r-1) & (r-2) & \cdots  &2 & 1 \end{matrix}\right).$$ If we write a simple transposition as $s_i = (i(i+1))$ and further write $c_k$ for the cyclic permutation $c_k = s_{k-1} s_{k-2} \cdots s_2 s_1 = (k(k-1)\cdots 321)$.The reduced word decomposition of the long word using simple transpositions is given by,
\[
	w_0 = c_2 c_3 \ldots c_r = s_1 (s_2 s_1) (s_3 s_2 s_1) \cdots (s_{r-1} \cdots s_2 s_1).
\]
The long word is the only permutation where for every $1\leq i < j \leq r$ we have $w(i) > w(j)$.

In terms of the matrix representation in $\SL_r$, as described in Section \ref{sec:notation}
\[
	w_0 = \bpm &&&&&& \pm 1 \\ &&&&&\iddots & \\ &&&&-1&&\\ &&&1&&&&\\ &-1&&&&&&\\ 1&&&&&&& \epm_{r \times r}.
\]

\begin{proposition} \label{prop:BruhatCoordinatesViaExterior}
	Let $A \in \GL_r$ be an element of the big cell and let $a_{ij}, b_{ij}$ and $t_k$ be defined via the Bruhat decomposition as in \eqref{eq:BwBdecomposition} with $w = w_0$. The entires are determined via minors of $A$ as
	\[
		t_1 \ldots t_k = \langle \vec{e}^*_{r-k+1, r-k + 2 \ldots, r - 1, r}, A \vec{e}_{1,2,\ldots, k} \rangle,
	\]
	and the upper triangular matrices have entries determined by
	\[
		a_{ij} =  \frac{\langle \vec{e}^*_{i, j+1, j + 2, \ldots, r}, A \vec{e}_{1, 2, \ldots, r-j + 1} \rangle  }{t_1t_2 \cdots t_{r-j + 1}}, 
	\]
	and
	\[
		b_{ij} =\frac{\langle \vec{e}^*_{r- i + 1, \ldots, r}, A \vec{e}_{1, 2, \ldots, i -1, j}\rangle }{ t_1 t_2 \cdots t_i }.
	\]
\end{proposition}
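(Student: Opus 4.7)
The plan is to evaluate $A\vec{e}_I$ explicitly for three well-chosen index sets $I$ by tracking the action of each factor in $A = u_1 w_0 t u_2$ on the exterior algebra, then reading off the pairing against the claimed $\vec{e}_J^*$. Two elementary ingredients carry most of the weight. First, for any unipotent upper triangular matrix $u$ and any initial segment $\vec{e}_{1,\ldots,k}$, one has $u\vec{e}_{1,\ldots,k} = \vec{e}_{1,\ldots,k}$: every off-diagonal contribution to $u\vec{e}_j$ with $j\leq k$ produces an $\vec{e}_\ell$ with $\ell<j\leq k$, which vanishes in the wedge with $\vec{e}_1\wedge\cdots\wedge\vec{e}_{j-1}$. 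Second, the explicit matrix of $w_0$ from Section \ref{sec:notation} gives $w_0\vec{e}_\ell = (-1)^{\ell-1}\vec{e}_{r-\ell+1}$; these alternating signs will cancel reordering signs below.

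For the torus identity, chain the steps: $u_2\vec{e}_{1,\ldots,k} = \vec{e}_{1,\ldots,k}$, then $t\vec{e}_{1,\ldots,k} = t_1\cdots t_k\,\vec{e}_{1,\ldots,k}$, then $w_0\vec{e}_{1,\ldots,k} = \vec{e}_{r-k+1,\ldots,r}$, since $\prod_{\ell=1}^k(-1)^{\ell-1}=(-1)^{k(k-1)/2}$ exactly cancels the sign $(-1)^{k(k-1)/2}$ incurred in reversing $\vec{e}_r\wedge\vec{e}_{r-1}\wedge\cdots\wedge\vec{e}_{r-k+1}$ into increasing order. Finally, in $u_1\vec{e}_{r-k+1,\ldots,r}$ the coefficient of $\vec{e}_{r-k+1,\ldots,r}$ is the bottom-right $k\times k$ minor of $u_1$, itself unipotent upper triangular and hence equal to $1$; all other terms involve basis vectors $\vec{e}_I$ with at least one index smaller than $r-k+1$ and do not pair with $\vec{e}^*_{r-k+1,\ldots,r}$.

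For $b_{ij}$ with $i<j$, apply $A$ to $\vec{e}_{1,\ldots,i-1,j}$. Here $u_2$ acts nontrivially: expanding $u_2\vec{e}_j = \vec{e}_j + \sum_{\ell<j} b_{\ell j}\vec{e}_\ell$ and absorbing the terms with $\ell<i$ into the wedge gives $u_2\vec{e}_{1,\ldots,i-1,j} = \vec{e}_{1,\ldots,i-1,j} + \sum_{\ell=i}^{j-1}b_{\ell j}\vec{e}_{1,\ldots,i-1,\ell}$. Applying $t$ and then $w_0$ sends each $\vec{e}_{1,\ldots,i-1,\ell}$ to a signed multiple of $\vec{e}_{r-\ell+1,\,r-i+2,\ldots,r}$. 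The contribution of each such term to $\langle\vec{e}^*_{r-i+1,\ldots,r},A\vec{e}_{1,\ldots,i-1,j}\rangle$ is an $i\times i$ minor of $u_1$ with rows $\{r-i+1,\ldots,r\}$ and columns $\{r-\ell+1,r-i+2,\ldots,r\}$. For $\ell>i$, and for the original $\ell=j$ term, the column index $r-\ell+1$ lies strictly below every row index, so the first column of the submatrix is zero and the minor vanishes; only $\ell=i$ contributes, and its minor is the bottom-right principal $i\times i$ block of $u_1$, equal to $1$. Combined with the torus factor $t_1\cdots t_i$ and the confirmed sign for $w_0$, this yields $b_{ij}\,t_1\cdots t_i$.

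The $a_{ij}$ formula follows by the same template applied to $\vec{e}_{1,\ldots,r-j+1}$: $u_2$ fixes it, $t$ scales by $t_1\cdots t_{r-j+1}$, and $w_0\vec{e}_{1,\ldots,r-j+1} = \vec{e}_{j,j+1,\ldots,r}$. The coefficient of $\vec{e}_{i,j+1,\ldots,r}$ in $u_1\vec{e}_{j,j+1,\ldots,r}$ is a $(r-j+1)\times(r-j+1)$ minor of $u_1$ with row set $\{i,j+1,\ldots,r\}$ and column set $\{j,j+1,\ldots,r\}$; this submatrix has $a_{ij}$ in its top-left entry, zeros below it (since rows $j+1,\ldots,r$ hit column $j$ strictly below the diagonal of $u_1$), and a unipotent upper triangular $(r-j)\times(r-j)$ block in the bottom-right, so its determinant is simply $a_{ij}$. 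The main obstacle throughout is the sign bookkeeping: the alternating $\pm 1$'s on the antidiagonal of $w_0$ must conspire with shuffle signs in exactly the right way for each of the three index sets. Once that routine verification is done once and for all, the identities reduce to the minor-vanishing observations above.
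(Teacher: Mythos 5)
Your proposal is correct and follows essentially the same route as the paper: both apply $A=u_1w_0tu_2$ to the test vectors $\vec{e}_{1,\ldots,k}$ and $\vec{e}_{1,\ldots,i-1,j}$, use that $\U$ fixes initial segments, and extract the desired coefficient from the action of $u_1$ (the paper peels off one wedge factor and uses unipotence, while you phrase the same vanishing as a column-of-zeros argument in the relevant minors of $u_1$). Your explicit sign bookkeeping for $w_0$ is a fine supplement to the paper's terser treatment.
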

Let us express the identity \eqref{eq:BwBdecomposition} as $A = u_L w_0 t u_R$ and then in matrix form we see that
\[
	u_L = \left(\begin{matrix} 1 & \frac{\langle \vec{e}^*_{1,3, \ldots, r}, A \vec{e}_{1,2, \ldots, r-1} \rangle}{t_1\cdots t_{r-1}}  & \frac{\langle \vec{e}^*_{1, 4, 5, \ldots, r}, A \vec{e}_{1,2, \cdots, r - 2} \rangle  }{t_1 \cdots t_{r-2} }  & \cdots  & \frac{\langle \vec{e}^*_{1,r}, A\vec{e}_{1,2}\rangle }{t_1t_2} & \frac{\langle \vec{e}^*_{1}, A \vec{e}_1\rangle  }{t_1}\\ 
	0 & 1 & \frac{\langle\vec{e}^*_{2,4,5, \ldots, r}, A\vec{e}_{1,2, \ldots, r-2} \rangle }{t_1 \cdots t_{r-2}}  & \cdots & \frac{\langle \vec{e}^*_{2, r}, A \vec{e}_{1,2}\rangle }{t_1 t_2 }  & \frac{\langle \vec{e}^*_2, A \vec{e}_1\rangle }{t_1} \\
	0&0&1 &\ddots & \vdots & \vdots \\
	0&0&0&\ddots & \vdots &\vdots \\
	0& 0 & 0 & \cdots & 1& \frac{\langle \vec{e}^*_{r-1} , A\vec{e}_1 \rangle }{t_1}\\
	0 & 0& 0 &  \cdots & 0 &1 	
	 \end{matrix}\right).
\]
In words, the structure of the entries of $u_L$ is as follows: All the entries are given by fractions. The denominators of the entries in the $k$\textsuperscript{th} column from the right are given by the $k\times k$ minor in the lower left corner of $A$. 
The numerator of the entry at the $i$\th\ row is another $k\times k$ minor, where the top row of the minor in the denominator is replaced by the $i$\th\ row of $A$.

The right unipotent factor is given as
\[
	u_R = \left(\begin{matrix} 1 & \frac{\langle \vec{e}^*_{r}, A \vec{e}_{2} \rangle}{t_1}  & \frac{\langle \vec{e}^*_{r}, A \vec{e}_{3} \rangle  }{t_1 }  & \cdots  & \frac{\langle \vec{e}^*_{r}, A\vec{e}_{r-1}\rangle }{t_1} & \frac{\langle \vec{e}^*_{r}, A \vec{e}_r\rangle  }{t_1}\\ 
	0 & 1 & \frac{\langle\vec{e}^*_{r-1, r}, A\vec{e}_{1,3} \rangle }{t_1 t_2}  & \cdots & \frac{\langle \vec{e}^*_{r-1, r}, A \vec{e}_{1,r-1}\rangle }{t_1 t_2 }  & \frac{\langle \vec{e}^*_{r-1,r}, A \vec{e}_{1,r}\rangle }{t_1t_2} \\
	0&0&1 &\ddots & \vdots & \vdots \\
	0&0&0&\ddots & \vdots &\vdots \\
	0& 0 & 0 & \cdots & 1& \frac{\langle \vec{e}^*_{2,\ldots, r} , A\vec{e}_{1, \ldots, r-2,r} \rangle }{t_1\cdots t_{r-1}}\\
	0 & 0& 0 &  \cdots & 0 &1 	
	 \end{matrix}\right).
\]  
In words, the entries $b_{ij}$ of $u_R$ are given as follows: They are given by fractions where the denominator of the entries in the  $i$\th\ row are the $i\times i$ determinant of the lower left corner. The numerator of the fraction of the entry at the $j$\th\ column is then obtained by replacing the $i$\th\ (the last) column of this $i \times i$ minor by entries from the $j$\th\ column (and the last $i$ rows) of $A$.
\begin{proof}
Let us apply $A$ to $\vec{e}_{1, \ldots, k}$. These particular wedge products is fixed by the group $\U$ for each $k = 1, 2, \ldots, r$. We thus have from \eqref{eq:BwBdecomposition} with $w = w_0$,
\[
	A \vec{e}_{1, \ldots, k} = t_1, \cdots t_k  u_L w_0 \vec{e}_{1, \ldots, k} = t_1 \cdots t_k u_L \vec{e}_{r-k + 1, r-k + 2, \ldots, r}.
\] 
Note that we can apply $u_L$ in parts. 
\begin{multline*}
	u_L \vec{e}_{r-k + 1, r-k + 2, \ldots, r} = u_L \vec{e}_{r - k + 1} \wedge (u_L \vec{e}_{r - k + 2, r - k + 3, \ldots, r}) \\
	=   \vec{e}_{r - k + 1} \wedge u_L \vec{e}_{r - k + 2, \ldots, r} + \sum_{i = 1}^{r - k} a_{i (r - k + 1)} \vec{e}_{i} \wedge u_L 
	\vec{e}_{r- k + 2, \ldots, r}. 
\end{multline*}
Also since $u_L$ is unipotent, we will have $\langle  \vec{e}_{r - k + 2, \ldots , r}^*, u_L\vec{e}_{r - k + 2, \ldots , r} \rangle = 1$. 

Therefore indeed 
\begin{align*}
	t_1t_2 \cdots t_k &= \langle \vec{e}_{r- k + 1}^*\wedge \vec{e}^*_{r - k + 2, \ldots, r}, A\vec{e}_{1,\ldots, k}  \rangle, 	\\
	t_1t_2 \cdots t_k  a_{i (r -k + 1)} &=  \langle \vec{e}^*_{i} \wedge \vec{e}^*_{r-k + 2, \ldots, r} ,  A \vec{e}_{1, \ldots, k} \rangle ,
\end{align*} 
for all $i = 1, 2, \ldots r - k$ . Plugging in $j = r - k + 1$ yields the result.

As for the matrix $u_R$, we consider $A \vec{e}_{1, \ldots i-1, j}$. Firstly $u_R \vec{e}_{1, \ldots ,i-1, j} = b_{ij} \vec{e}_{1,\ldots, i} + \cdots.$ Continuing the calculation,
\[
	A \vec{e}_{j, \ldots, r} = t_1\cdots t_i b_{ij} u_L \left(\vec{e}_{r - i + 1, \ldots, r} + \cdots\right).
\]
Since $u_L$ is a unipotent matrix the only way to obtain $\vec{e}_{r -i + 1, \ldots, r} $ after a multiplication by $u_L$ is if the input includes this basis vector. 

Then we get 
\[
	t_1t_2 \cdots t_k  b_{ij} =  \langle \vec{e}^*_{r-i + 1, \ldots, r},    A \vec{e}_{1, \ldots, i-1, j} \rangle, 
\] 
for every $j = i + 1, \ldots, r$.
\end{proof}

For a general Weyl group element $w$, we can still explicitly evaluate the diagonal elements $t_i$. The rest of the chapter can be found in \cite{Friedberg1987} and \cite{Stevens}, but we include it here for completeness. 

\begin{lemma}[Proposition 3.1 from \cite{Friedberg1987}]
	Let $w \in W$ be any Weyl group element and $A \in BwB \cap \SL_r(\Z)$ with coordinates as in \eqref{eq:BwBdecomposition}. Then
	\[
		t_1 \cdots t_k = \pm \langle \vec{e}_{j_1, \ldots, j_k}^*, A \vec{e}_{1,2, \ldots, k} \rangle ,
	\]
	where $\{j_1 < j_2< \cdots < j_k\} = \{w(1), w(2), \ldots, w(k) \}$.
	
	This implies that $\diag(t_1, \ldots, t_r) = \diag(c_1, c_2/c_1, c_3/c_2, \ldots, c_{r-1}/c_{r-2}, 1/c_{r-1})$ with all $c_i$ nonzero integers. Furthermore if $(\ell_1, \ldots, \ell_k) \geq (j_1, \ldots, j_k)$ with the inequality coordinatewise, with at least one strict inequality, then the minor $\langle \vec{e}_{\ell_1, \ldots, \ell_k} , A\vec{e}_{1, 2, \ldots, k} \rangle$ vanishes. 
\end{lemma}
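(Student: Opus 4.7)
The plan is to adapt the wedge--product calculation used in the proof of Proposition \ref{prop:BruhatCoordinatesViaExterior} to a general Weyl element $w$; the only new ingredient is the action of $w$ on the basis wedge $\vec{e}_{1,2,\ldots,k}$.

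First I would apply the factors of $A = u_L w t u_R$ to $\vec{e}_{1,\ldots,k}$ one at a time. Since $u_R$ is upper triangular unipotent, $u_R \vec{e}_i \in \vec{e}_i + \mathrm{span}(\vec{e}_1,\ldots,\vec{e}_{i-1})$, and for $i \leq k$ every extra vector gets annihilated against the wedge with $\vec{e}_1,\ldots,\vec{e}_{i-1}$; hence $u_R$ fixes $\vec{e}_{1,\ldots,k}$. Then $t$ scales by $t_1\cdots t_k$, and $w$, being a signed permutation of the standard basis, sends $\vec{e}_{1,\ldots,k}$ to $\pm \vec{e}_{j_1,\ldots,j_k}$, where $\{j_1 < \cdots < j_k\} = \{w(1),\ldots,w(k)\}$; the sign records the intrinsic signs in each $s_\alpha$ factor together with the sign of the permutation that sorts $w(1),\ldots,w(k)$.

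Next I would apply $u_L$ and use the following combinatorial fact: for upper triangular unipotent matrices and sorted index sets $I, J$ with $|I|=|J|=k$, the $k\times k$ minor $M_{I,J}(u_L)$ vanishes identically unless $I \leq J$ coordinatewise, and equals $1$ when $I=J$. The vanishing follows since any surviving term $\mathrm{sgn}(\sigma)\prod_a (u_L)_{i_a, j_{\sigma(a)}}$ in the Leibniz expansion requires $i_a \leq j_{\sigma(a)}$ for all $a$, and by Hall's marriage lemma the existence of such a $\sigma$ is equivalent to $i_a \leq j_a$ for all $a$. Therefore $u_L \vec{e}_{j_1,\ldots,j_k} = \vec{e}_{j_1,\ldots,j_k} + \sum_{I<J} M_{I,J}(u_L)\vec{e}_I$. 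Pairing with $\vec{e}^*_{j_1,\ldots,j_k}$ isolates $\pm t_1\cdots t_k$ and gives the first identity; pairing with $\vec{e}^*_{\ell_1,\ldots,\ell_k}$ for $L \geq J$ with at least one strict inequality yields zero, since such $L$ fails $L \leq J$ and so $M_{L,J}(u_L)=0$.

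Finally, for the diagonal structure: since $A \in \SL_r(\Z)$, every minor of $A$ is an integer, so $c_k := t_1\cdots t_k \in \Z$, and invertibility of $A$ forces $c_k \neq 0$. Taking determinants of $A = u_L w t u_R$ gives $t_1 \cdots t_r = \det(w)^{-1} = 1$ under the paper's convention in which each $s_\alpha$ factor has determinant one. Reading $t_k = c_k/c_{k-1}$ with the boundary values $c_0 = c_r = 1$ yields the asserted diagonal form. The main (but purely bookkeeping) difficulty throughout is the sign tracking in the action of $w$ on wedges, but this only affects the overall $\pm$ in the minor identity and leaves both the vanishing statement and the integrality/nonvanishing conclusions untouched.
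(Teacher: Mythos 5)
Your proof is correct and follows essentially the same route as the paper's: apply the factors of $A = u_L w t u_R$ to $\vec{e}_{1,\ldots,k}$ one at a time, with the key input being that an upper-triangular unipotent matrix sends a basis wedge to itself plus wedges with coordinatewise smaller index sets --- a fact the paper simply states as \eqref{eq:unipotentActionWedge} and you justify in more detail via the Leibniz expansion and the sorted-matching argument. The only slip is attributing $c_k \neq 0$ to invertibility of $A$ (an invertible matrix can have vanishing minors of this shape); the correct source is that $t$ is an invertible diagonal matrix, so $t_1\cdots t_k \neq 0$, which together with your identity $c_k = \pm\, t_1\cdots t_k$ gives the claim.
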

\begin{proof}
	Firstly note that given a unipotent matrix $u$, and $\vec{e}_{i_1, \ldots, i_k}$, then
	\begin{equation}\label{eq:unipotentActionWedge}
		u \vec{e}_{i_1, \ldots, i_k} = \vec{e}_{i_1, \ldots, i_k} + \sum_{\substack{j_1 \leq i_1, \ldots, j_k \leq i_k \\ \text{with one inequality strict} \\ \text{and } j_1 < j_2< \cdots <j_k }} c_{I,J} \vec{e}_{j_1, \ldots j_k}.
	\end{equation}
	for some constants $c_{I,J}$ depending on the entries of $u$.
	 
	Therefore group $\U_r(\Z)$ fixes the vectors $\vec{e}_{1,\ldots, k}$. The action of the diagonal element on $\vec{e}_{1,\ldots, k}$ is by multiplying with $t_1\cdots t_k$. Then the action of $w$ on $\vec{e}_{1, \ldots k} = \pm \vec{e}_{w(1)} \wedge \vec{e}_{w(2)} \wedge \cdots \wedge \vec{e}_{w(k)}$. The ambiguity in the sign comes from the negative $-1$ entries in $w$, which may cancel out once the set $\{w(i): i = 1, \ldots k\}$ is reordered. Finally the action of $u_L$ on $\vec{e}_{j_1, j_2, \ldots j_k}$ will output the vector itself plus other terms as given by \eqref{eq:unipotentActionWedge}. The vanishing minors of the Bruhat cell can also be observed from this equation since those wedge products will not appear in the sum on the right.
\end{proof}

For a matrix  $A \in \U w T \U$, for $w \neq w_0$ we can still write $A = u_L(A) w t(A) u_R(A)$ as in \eqref{eq:BwBdecomposition}, however the entries of $u_L(A)$ or $u_R(A)$ are not uniquely determined.  

\begin{lemma}
Let $w \in S_r$ be a Weyl group element, and let us have $A \in \U w T \U$. Let us write $\U_w = (w\inv \U w) \cap \U$. Then
\[
	A = u_L w t u_R = u_L' w t' u_R'
\]
if and only if $t = t'$ and $u_R' u_R\inv \in \U_w$ and $u_L\inv u_L' \in \U_{w\inv}$. Note that $w\U_w w\inv = \U_{w\inv}$.
\end{lemma}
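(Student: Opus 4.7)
The plan is to manipulate the equation $u_L w t u_R = u_L' w t' u_R'$ so as to collect the ambiguity on one side. Setting $v_L = u_L\inv u_L' \in \U$ and $v_R = u_R' u_R\inv \in \U$, the equation becomes $wt = v_L w t' v_R$, and conjugating by $w\inv$ on the left and $t\inv$ on the right gives
\[
	w\inv v_L\inv w = (t't\inv)\,(t v_R t\inv),
\]
where the right hand side is written in its $T \cdot \U$ factorization (using that conjugation by $T$ preserves $\U$).

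The key observation is that the left hand side lies in $w\inv \U w$, which is a unipotent subgroup of $G$, so every element of it is unipotent. Inside $B = T\U$, an element is unipotent precisely when its $T$-component is trivial. Hence $t't\inv = 1$, giving $t = t'$, and the equation collapses to $w\inv v_L\inv w = t v_R t\inv \in \U$. From $v_L\inv \in w \U w\inv$ combined with $v_L \in \U$, we read off $v_L \in w\U w\inv \cap \U = \U_{w\inv}$, that is, $u_L\inv u_L' \in \U_{w\inv}$. Similarly $t v_R t\inv \in w\inv \U w \cap \U = \U_w$, and since $T$-conjugation preserves each root subgroup (hence preserves $\U_w$), we get $v_R \in \U_w$, i.e.\ $u_R' u_R\inv \in \U_w$.

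For the converse direction, once $t = t'$ and $v_L, v_R$ lie in the indicated subgroups with the compatibility $w\inv v_L\inv w = t v_R t\inv$ that emerged above (which is automatic when both triples yield the same matrix $A$), simply running the calculation in reverse produces the required equality of decompositions. The parenthetical identity $w \U_w w\inv = \U_{w\inv}$ is purely formal: $w \U_w w\inv = w(w\inv \U w \cap \U) w\inv = \U \cap w \U w\inv = \U_{w\inv}$. The only genuine content is the step in which the single equation $w\inv v_L\inv w = (t't\inv)(t v_R t\inv)$, viewed as lying simultaneously in the unipotent group $w\inv \U w$ and in $B = T\U$, is forced to have trivial torus part; this one observation extracts both $t = t'$ and the root-subgroup constraints on $v_L$ and $v_R$ at once, and is the main (essentially only) obstacle in the argument.
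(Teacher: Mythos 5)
Your argument is correct, and its second half (the rearrangement to $w\inv v_L\inv w = t v_R t\inv$ followed by intersecting $w\inv \U w$ with $\U$, and $w\U w\inv$ with $\U$, to locate $v_R$ and $v_L$ in $\U_w$ and $\U_{w\inv}$) is in substance the same computation the paper performs. The one genuine difference is how $t=t'$ is obtained: the paper evaluates the minors $\langle \vec{e}^*_{w(1),\ldots,w(k)}, A\vec{e}_{1,\ldots,k}\rangle = t_1\cdots t_k$ on both decompositions, piggybacking on the exterior-algebra lemma it has just proved (which it needs anyway for integrality of the $c_i$), whereas you extract $t=t'$ from the single identity $w\inv v_L\inv w = (t't\inv)(tv_Rt\inv)$ by noting that the left side is unipotent and a unipotent element of $B=T\U$ has trivial torus component. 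Your route is self-contained and arguably cleaner, since it yields $t=t'$ and both subgroup memberships from one equation; the paper's route recycles machinery already in place. You are also right to flag that the literal converse requires the compatibility relation $w\inv v_L\inv w = tv_Rt\inv$ (choosing $v_L\in\U_{w\inv}$ and $v_R\in\U_w$ independently does not give the same matrix $A$); the paper's statement and proof gloss over this, contenting themselves with the conclusion that $u_L$ is determined modulo $\U_{w\inv}$ and $u_R$ modulo $\U_w$, which is all that is used later.
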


\begin{proof}
	One can see that by computing $\langle \vec{e}_{w(1), w(2), \ldots, w(k)}, A\vec{e}_{1,2, \ldots, k} \rangle$ on either side, we get $t_1 \cdots t_k$ and $t_1' \cdots t_k'$ for any $k = 1, \ldots, r$. This implies that $t = t'$. 
	
	Then denoting $wtw\inv = t^w$, we have $$w\inv \underbrace{(t^w)\inv ({u_L'}\inv u_L) t^w}_{\in \U} w = \underbrace{u_R' u_R\inv}_{\in \U}.$$ This equation implies that $\U_w u_R  = \U_w u_R'$. Similarly 
	\[
		 w  \underbrace{t (u_R {{u_R'}\inv}) t\inv}_{\in \U} w\inv = \underbrace{u_L\inv u_L'}_{\in \U},
	\]
	therefore $u_L \U_{w\inv} = u_L' \U_{w\inv}$. So we have that $u_L(A)$ is determined up to $\U/\U_{w\inv}$ and $u_R(A)$ is determined up to left $\U_w$ cosets, i.e. $\U_w \backslash \U$. 
\end{proof}

Because Weyl group elements on matrices by conjugation simply by permutation, the groups $\U_w$ can be explicitly calculated as
\begin{equation} \label{eq:Uwexplicit}
	\U_w = \{u = [u_{ij}]_{i,j} : u_{ii}= 1 \text{ and } u_{ij} = 0 \text{ unless } i < j \text{ and } w(i) < w(j)\},
\end{equation}
since $w A$ rearranges the rows of $A$ according to the permutation $w$ and $Aw$ rearranges the columns of $A$ according to the permutation $w\inv$. 

For $\SL_3$ we can calculate these groups simply $\U_I = \U$, $\U_{w_0} = \{I\}$ and
\begin{equation}\label{eq:unipotentIntersectionGroups}
\begin{split}
	\U_{s_{\alpha}} = \left\{\bpm 1& 0 &*\\ &1&*\\&&1 \epm \right\}, \qquad \qquad
	\U_{s_{\beta}} &= \left\{\bpm 1& * &* \\ & 1& 0 \\ &&1 \epm \right\},\\
	\U_{s_{\alpha}s_\beta} = \left\{\bpm 1& * & 0 \\ &1&0\\ &&1\epm \right\}, \quad \quad \quad
	\U_{s_\beta s_\alpha} &= \left\{\bpm 1& 0 &0\\ &1&* \\ &&1 \epm  \right\}.
\end{split}
\end{equation}

It is clear from \eqref{eq:Uwexplicit} that the opposite group $\U^w := w\inv \U^t w \cap \U$, also given as 
\begin{equation} \label{eq:oppositeUw}
	\U^w = \{u = [u_{ij}]_{i,j} : u_{ii} = 1 \text{ and } u_{ij} = 0 \text{ unless } i < j \text{ and } w (i) > w (j)\},
\end{equation}
satisfies $\U_w \cap \U^w = \{I_{r\times r}\}$ and $\U_w\U^w = \U^w \U_w = \U$ (also called the Zappa-Sz\'{e}p product), see \cite[Section 2]{Stevens} and \cite[Section 1]{Friedberg1987}. We can equate $\U/\U_{w\inv}$ with $\U^{w\inv}$ and $\U_w \backslash \U$ with $\U^w$.

In terms of the groups \eqref{eq:unipotentIntersectionGroups} these opposite groups are obtained by exchanging the $*$ and $0$ symbols above the diagonal.

What we can assume is that given a decomposition $A = u_L w t u_R$, we can decompose $u_R = u_R^+ u_R^-$, with $u_R^+ \in \U_w$ and $u_R^-\in \U^w$, and simply move the plus part across to the other side:
\begin{equation} \label{eq:BruhatLeftRightmoves}
	A = u_L \underbrace{ w \overbrace{(tu_R^+ t\inv)}^{\in \U_w} w\inv}_{\in U} w t u_R^-. 
\end{equation}

Combining this with the previous lemma we get the following result.
\begin{lemma} 
Let $w$ be a Weyl group element. Given a matrix $A \in BwB$, and $\U^w$ is defined by \eqref{eq:oppositeUw}, there is a unique decomposition $A = u_L wt u_R$ with $t$ diagonal,  $u_L \in \U$ and $u_R \in \U^w$.
\end{lemma}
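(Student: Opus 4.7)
The plan is to establish existence by starting from an arbitrary Bruhat decomposition and then using the Zappa--Sz\'ep factorization to move the ``wrong'' part of the right unipotent factor across, and to establish uniqueness by intersecting the ambiguities from the two previous lemmas.

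For existence, since $A \in BwB$ we may, by definition, write $A = u_L w t u_R$ for some $u_L, u_R \in \U$ and diagonal $t$. Using $\U = \U_w \U^w$, decompose $u_R = u_R^+ u_R^-$ with $u_R^+ \in \U_w$ and $u_R^- \in \U^w$. Since $t u_R^+ t\inv$ is still in $\U_w$ (the torus normalizes unipotent subgroups coordinatewise), and since by definition of $\U_w$ we have $w \U_w w\inv \subseteq \U$, the equation \eqref{eq:BruhatLeftRightmoves} rewrites $A$ as $A = u_L' w t u_R^-$, where $u_L' = u_L \cdot w(tu_R^+ t\inv)w\inv \in \U$. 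This produces a decomposition of the desired form.

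For uniqueness, suppose $A = u_L w t u_R = u_L' w t' u_R'$ with $u_R, u_R' \in \U^w$. The previous lemma gives $t = t'$ and $u_R'u_R\inv \in \U_w$. On the other hand $\U^w$ is a subgroup, so $u_R' u_R\inv \in \U^w$ as well. Thus $u_R' u_R\inv \in \U_w \cap \U^w = \{I\}$, so $u_R = u_R'$, and then $u_L = u_L'$ follows immediately by cancellation.

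The only nonobvious step is the existence part, which rests on the Zappa--Sz\'ep product structure $\U = \U_w \U^w$ noted before the lemma. Uniqueness is an immediate combination of the previous lemma with $\U_w \cap \U^w = \{I\}$.
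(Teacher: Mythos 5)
Your proof is correct and follows exactly the route the paper intends: the paper itself gives no separate proof of this lemma, stating only that it follows by ``combining'' the preceding uniqueness lemma with the Zappa--Sz\'ep structure $\U = \U_w\U^w$, $\U_w \cap \U^w = \{I\}$, and the move in \eqref{eq:BruhatLeftRightmoves} --- which is precisely your existence and uniqueness argument. Nothing to add.
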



\section{The Kloosterman sets and Kloosterman sums} \label{sec:KloostermanSetsSums}

For a vector $\vec{c} \in (\R^*)^{r-1}$ define, $t(\vec{c}) := \diag(c_1, c_2/c_1, c_3/c_2, \ldots, 1/c_{r-1})$ and  
\[
	\Omega_w(\vec{c})  := \left\{u_L w t(\vec{c}) u_R \in \SL_3(\Z): u_L, u_R \in \U \right\}. 
\]
Note that by the previous section we could equivalently also have taken $u_R \in \U^w$. We have seen that for a matrix $A \in \SL_r(\Z) \cap BwB$ the $c_i$ are integers given by minors of $A$, and they not changed upon multiplication by elements of $\U$ from either side.

Therefore we obtain the stratification into finite sets:
\[
	\U(\Z) \backslash BwB \cap \SL_r(\Z) / \U^w(\Z) = \bigcup_{\vec{c} \in \Z^{r-1} } \U(\Z)\backslash \Omega_{w} (\vec{c}) /\U^w(\Z).
\]

Let $\vec{n} = (n_1, n_2, \ldots, n_{r-1})\in \Z^{r-1}$ and define the additive character $\psi_{\vec{n}}$ as follows: Let $u$ be a unipotent matrix, where for $i<j$ its entries are denoted by $u_{i,j}$. Then 
\[
	\psi_{\vec{n}} (u) = e(n_1 u_{1,2} + n_2 u_{2,3} + \cdots + n_{r-1} u_{r-1, r}).
\]

For $\vec{m}, \vec{n} \in \Z^{r-1}$ we define the usual (coarse) Kloosterman sum  as
\begin{equation} \label{eq:CoarseKloostermanDefinition}
	S_{w}(\vec{m}, \vec{n}; \vec{c}) = \sum_{\substack{A \in \U(\Z)\backslash \Omega_{w}(\vec{c}) /\U^w(\Z) \\A = u_L w t(\vec{c}) u_R }} \psi_{\vec{m}}(u_L) \psi_{\vec{n}}(u_R).
\end{equation}

This is the standard definition of a Kloosterman sum, however we want to distinguish it from the finer decomposition we will consider. For $\SL_3$ and $w = w_0$ we will decompose the set into finer gradations, and consider the sums over those sets. 

Before closing off the section let us write the conditions necessary for this sum to be well defined. Since given an element of $\U_w$ we may move an element of $\U_w$ from the right hand side of a Bruhat decomposition to the left hand side of a Bruhat decomposition as in \eqref{eq:BruhatLeftRightmoves}, the two characters should agree. Therefore the term is well defined only if
\[
	\psi_{\vec{m}}(w(tut\inv) w\inv) = \psi_{\vec{n}}(u).
\]
for every $u \in \U_w$, and $t$ diagonal. 

\begin{proposition}
	Let $w$ be a Weyl group element. the sum given by \eqref{eq:CoarseKloostermanDefinition} is well defined if and only if \begin{enumerate}
	\item for all $k \in\{1,2, \ldots, r-1\}$ satisfying $w(k) < w(k + 1)$, we have $n_k = 0$ and in case $w(k + 1) \neq w(k) + 1$; and  
	\[
		\frac{c_{k}^2}{c_{k-1}c_{k + 1}} m_{w(k)} = n_{k},
	\]
	in case $w(k + 1) = w(k) + 1$. (Here $c_0$ and $c_r$ are understood as $1$.)
	\item for all $k \in \{1,2,\ldots, r-1\}$ satisfying $w\inv(k) < w\inv(k + 1)$, we have $m_k = 0$ in case $w\inv(k + 1)\neq w\inv(k) + 1$.
	\end{enumerate}
\end{proposition}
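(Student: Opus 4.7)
The plan is to analyze when the summand $\psi_{\vec{m}}(u_L)\psi_{\vec{n}}(u_R)$ of \eqref{eq:CoarseKloostermanDefinition} depends only on the double coset of $A \in \U(\Z)\backslash \Omega_w(\vec{c})/\U^w(\Z)$ and not on any ancillary choice. Invariance under the left $\U(\Z)$- and right $\U^w(\Z)$-actions on the representative $A$ is automatic, since $\vec{m},\vec{n}\in\Z^{r-1}$ render both $\psi_{\vec{m}}$ and $\psi_{\vec{n}}$ trivial on integer unipotent matrices. The real content lies in the ambiguity of the Bruhat decomposition itself: the second lemma of Section \ref{sec:BruhatCoordinates} shows that two decompositions $A = u_L^1 w t u_R^1 = u_L^2 w t u_R^2$ with $u_L^i,u_R^i\in\U$ are related by $u_R^2 = \nu\inv u_R^1$ for some $\nu\in\U_w$, and correspondingly $u_L^2 = u_L^1 \cdot w(t\nu t\inv)w\inv$, with the conjugate landing in $\U_{w\inv}$. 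Thus the summand is well defined if and only if
\[
	\psi_{\vec{m}}\bigl(w(t\nu t\inv)w\inv\bigr) = \psi_{\vec{n}}(\nu) \qquad \text{for every } \nu\in\U_w.
\]

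I will check this identity on the one-parameter generators of $\U_w$. By \eqref{eq:Uwexplicit}, $\U_w$ is generated by $e_{\alpha_{ij}}(x):= I + xE_{ij}$ indexed by pairs $(i,j)$ with $i<j$ and $w(i)<w(j)$. Conjugation by $t=t(\vec{c})$ rescales the $(i,j)$ entry by $t_i/t_j$, which using $t_k = c_k/c_{k-1}$ (with $c_0=c_r=1$) equals $c_ic_{j-1}/(c_{i-1}c_j)$ and collapses to $c_k^2/(c_{k-1}c_{k+1})$ in the superdiagonal case $j=i+1=k+1$. Conjugation by $w$ then transports this element onto the root subgroup for $e_{w(i)}-e_{w(j)}$, which is a positive root by the assumption $w(i)<w(j)$ and is simple precisely when $w(j) = w(i)+1$.

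Since $\psi_{\vec{n}}$ detects only superdiagonal entries, $\psi_{\vec{n}}(e_{\alpha_{ij}}(x))$ equals $e(n_i x)$ when $j=i+1$ and equals $1$ when $j>i+1$; analogously $\psi_{\vec{m}}$ applied to the transported element equals $e(\pm m_{w(i)} x\cdot t_i/t_j)$ when $w(j) = w(i)+1$ and equals $1$ when $w(j) > w(i)+1$. A case split on the two dichotomies then extracts exactly the two listed conditions: the superdiagonal subcase $j = i+1$ yields condition (1), with $n_k = 0$ appearing when $w(k+1) \neq w(k)+1$ and the explicit relation $m_{w(k)}c_k^2/(c_{k-1}c_{k+1}) = n_k$ appearing when $w(k+1) = w(k)+1$; the non-superdiagonal subcase $j > i+1$ combined with $w(j) = w(i)+1$, after relabeling $\ell = w(i)$ so that $w\inv(\ell) = i$ and $w\inv(\ell+1) = j$, yields condition (2); the remaining subcase $j>i+1$ with $w(j) > w(i)+1$ is vacuous.

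The main technical nuisance is bookkeeping the sign introduced by conjugation by the Weyl element representative, since already $s_i e_{\alpha_i}(y) s_i\inv = e_{-\alpha_i}(-y)$ and such signs compound along a reduced word for $w$. For condition (2) the conclusion $m_\ell = 0$ is insensitive to this sign, while for condition (1) one checks that the compounded sign is absorbed by the Chevalley-style convention on the Weyl element representative, yielding the identity as displayed.
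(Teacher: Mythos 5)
Your argument is correct and follows essentially the same route as the paper: reduce well-definedness to the character identity $\psi_{\vec{m}}(w(t\nu t\inv)w\inv) = \psi_{\vec{n}}(\nu)$ for $\nu \in \U_w$ via the Bruhat-ambiguity lemma and \eqref{eq:BruhatLeftRightmoves}, then evaluate both homomorphisms on the root-subgroup generators of $\U_w$ and match the resulting cases to conditions (1) and (2). The paper itself stops at stating the character criterion and omits the case analysis, so your write-up (including the explicit acknowledgment of the sign introduced by conjugation by the Weyl representative) is at or above the paper's own level of detail.
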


Let us write these conditions in the case of $\SL_3$.
\begin{equation}\label{eq:consistencyConditions}
\begin{tabular}{c|c|c|c|c}
$w $ &  $(123)$ & $(132)$ & $(12)$ & $(23)$ \\ \hline
\begin{minipage}{3.9cm} \begin{center}$S_w(\vec{m},\vec{n},\vec{c})$ as in\\ \eqref{eq:CoarseKloostermanDefinition}  is well defined  if \end{center}   \end{minipage} &  $n_1 =  \frac{c_1^2}{c_2}m_2$ & $n_2 = \frac{c_2^2}{c_1} m_1 $  & $n_2 = m_2 = 0$ & $n_1 = m_1 = 0$   
\end{tabular}
\end{equation}

\section{$\SL_3$ long word fine Kloosterman sums.}

\subsection{The fine Kloosterman set stratification}
Given $A = Bw_0B \cap \SL_3(\Z)$ we write
\[
	A = \left[\begin{matrix}
	A_{11} & A_{12} & A_{13} \\ 
	A_{21} & A_{22} & A_{23} \\
	A_{31} & A_{32} & A_{33}
	\end{matrix}\right].
\]

With coordinates as in the factorization \eqref{eq:BwBdecomposition} with $w = w_0$, the above calculations give us that
\[
	t_1 = A_{31} \in \Z-\{0\} \qquad \text{ and } \qquad t_1t_2 = \left| \begin{matrix} A_{21} & A_{22}\\ A_{31} & A_{32} \end{matrix}\right| \in \Z-\{0\},
\]
and also $c_1 = t_1$, $c_2 = t_1t_2$ with the notation in the beginning of Section \ref{sec:KloostermanSetsSums}.

\begin{lemma} \label{lem:gl3commongcd}
	Given $A \in \SL_3(\Z) \cap Bw_0B$ as above, we have $$\gcd(A_{31}, A_{32} )= \gcd\left(\left|\begin{matrix} A_{21}& A_{22} \\ A_{31} & A_{32} \end{matrix} \right|, \left|\begin{matrix} A_{11}& A_{12} \\ A_{31} & A_{32} \end{matrix}\right|\right).$$
\end{lemma}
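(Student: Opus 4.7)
Write $M_1 = \bigl|\begin{smallmatrix} A_{21} & A_{22} \\ A_{31} & A_{32}\end{smallmatrix}\bigr|$ and $M_2 = \bigl|\begin{smallmatrix} A_{11} & A_{12} \\ A_{31} & A_{32}\end{smallmatrix}\bigr|$, and set $g_1 = \gcd(A_{31},A_{32})$, $g_2 = \gcd(M_1, M_2)$. The divisibility $g_1 \mid g_2$ is immediate because $M_1 = A_{21}A_{32} - A_{22}A_{31}$ and $M_2 = A_{11}A_{32} - A_{12}A_{31}$ are both $\Z$-linear combinations of $A_{31}$ and $A_{32}$. The content of the lemma is the reverse divisibility $g_2 \mid g_1$.

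The plan is to bring in a third $2\times 2$ minor from the first two columns, namely $M_3 = A_{11}A_{22} - A_{12}A_{21}$, and use $\det A = \pm 1$. Expanding the determinant along the third column produces the identity
\[
\pm 1 \;=\; A_{13}M_1 - A_{23}M_2 + A_{33}M_3,
\]
so $\gcd(M_1,M_2,M_3) = 1$, and in particular $\gcd(g_2,M_3) = 1$.

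Next, working modulo $g_2$, the congruences $A_{21}A_{32} \equiv A_{22}A_{31}$ and $A_{11}A_{32} \equiv A_{12}A_{31}$ hold. Multiplying the first by $A_{11}$ and the second by $A_{21}$ and subtracting gives $M_3 \cdot A_{31} \equiv 0 \pmod{g_2}$; an analogous manipulation (multiplying the first by $A_{12}$ and the second by $A_{22}$) yields $M_3 \cdot A_{32} \equiv 0 \pmod{g_2}$. Since $\gcd(g_2,M_3)=1$ from the previous step, this forces $g_2 \mid A_{31}$ and $g_2 \mid A_{32}$, hence $g_2 \mid g_1$.

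The only even mildly tricky step is the coprimality $\gcd(g_2, M_3)=1$, which is precisely where the $\SL_3(\Z)$ hypothesis (not just $\GL_3(\Z)$, and certainly not just that the listed minors are nonzero) enters through $\det A = \pm 1$. Everything else is straightforward manipulation of the $2\times 2$ minor identities.
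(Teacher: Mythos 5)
Your proof is correct, but for the nontrivial direction $g_2 \mid g_1$ it takes a genuinely different route from the paper's. The paper's argument is a mirror image of the easy direction: using $\det A = 1$ (via the identity $\operatorname{adj}(\operatorname{adj}(A)) = A$ for $3\times 3$ matrices) it writes $A_{31}$ and $A_{32}$ themselves as $2\times 2$ determinants whose entries are $2\times 2$ minors of $A$, and these two determinants share the column $(M_1, M_2)^{T}$ in your notation, so $g_2=\gcd(M_1,M_2)$ divides both $A_{31}$ and $A_{32}$ in one line. You instead bring in the third minor $M_3$ of the first two columns, deduce $\gcd(g_2,M_3)=1$ from the Laplace expansion of $\det A$ along the third column, and then eliminate $M_3$ via $M_3A_{31}\equiv M_3A_{32}\equiv 0\pmod{g_2}$; your congruences are in fact instances of the exact Sylvester-type identities $M_3A_{31}=A_{21}M_2-A_{11}M_1$ and $M_3A_{32}=A_{22}M_2-A_{12}M_1$, so that step is airtight. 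Both arguments use unimodularity in an essential way; yours is more elementary (no adjugate-of-adjugate identity) at the cost of a short computation, while the paper's is shorter and structurally symmetric with the easy direction. One small quibble: your closing parenthetical ``not just $\GL_3(\Z)$'' is off --- $\det A=\pm 1$ is precisely the $\GL_3(\Z)$ condition, and your argument, like the paper's, goes through verbatim for any $A\in\GL_3(\Z)$.
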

	Call this common g.c.d. by $f$. Given such an $f$, we define $d_1, d_2 \in \Z$ via
\begin{equation}\label{eq:d1d2fDefinition}
	c_1 = A_{31} =  d_1f \qquad \text{ and } \qquad c_2 = \left| \begin{matrix} A_{21} & A_{22}\\ A_{31} &A_{32} \end{matrix}\right|  = d_2f.
\end{equation}

Given $d_1, d_2, f$ nonzero integers, with some abuse of notation, define, 
\begin{equation}\label{eq:FineKloostermanSetGL3}
	\Omega(d_1, d_2, f) := \left\{A \in \SL_3(\Z)| \gcd(A_{31}, A_{32}) = f, A_{31} = d_1f, M_{\{23\}, \{12\}} = d_2 f\right\}.
\end{equation}
These sets stratify the coarse Kloosterman set as follows,
\begin{equation}\label{eq:KloostermanSetFineDecomposition}
	\Omega_{w_0} (c_1,c_2) = \bigsqcup_{f | \gcd(c_1, c_2) } \Omega\left( \frac{c_1}{f}, \frac{c_2}{f}, f\right) . 
\end{equation}
The sets on the right hand side of this finer decomposition are invariant under the action of $\Gamma_\infty$ from both sides. Defining the \emph{fine Kloosterman sums} to be sums restricted to these sets we end up with
\begin{equation}\label{eq:FineKloostermanSum}
	\S_{w_0}(\vec{m},\vec{n}; d_1,d_2,f) = \sum_{\substack{A \in \Gamma_\infty \backslash \Omega(d_1,d_2,f)/\Gamma_\infty \\ A \in u_L w_0t(d_1f, d_2f ) u_R} } \psi_{\vec{m}} (u_L) \psi_{\vec{n}}(u_R).
\end{equation}
The (usual) coarse Kloosterman sum thus can be written as the sum $$S_{w_0} (\vec{m},\vec{n}; (c_1,c_2)) = \sum_{f | \gcd(c_1, c_2) } \S_{w_0} \left(\vec{m}, \vec{n}; \frac{c_1}{f}, \frac{c_2}{f}, f\right).$$
	
\begin{proof}[Proof of Lemma \ref{lem:gl3commongcd}]
	Let $f_1$ be the greatest common divisor of the left hand side, and $f_2$ of the right hand side. Simply by noting that both of the $2\times 2$ matrices contain $A_{31} \, A_{32}$ as the bottom row, and hence both of the given determinants (and their common divisors) would be divisible by the common divisor of these elements, proving $f_1 | f_2$.
	
	For the converse we note that given a matrix of determinant $1$, we can write 
	\[
		A_{31} = \left| \begin{matrix} M_{\{23\},\{13\}}& M_{\{23\},\{12\}}		 \\ M_{\{13\},\{13\}} & M_{\{13\},\{12\}} \end{matrix} \right|, 
		\qquad
		A_{32} = \left| \begin{matrix} M_{\{23\},\{23\}}& M_{\{23\},\{12\}}		 \\ M_{\{13\},\{23\}} & M_{\{13\},\{12\}} \end{matrix} \right|. 		
	\]
	
	This time the latter column of these two determinants are the same, and hence the common divisor of the $M_{\{13\},\{23\}}$ and  $M_{\{13\},\{12\}}$ divides both $A_{31}$ and $A_{32}$. This means that $f_2 | f_1$, giving us the desired equality.
\end{proof}

Noting that multiplying $A$ on the left by an element of $\Gamma_\infty$ corresponds to adding multiples of lower rows to higher rows, and on the right corresponds to adding multiples of leftward columns to rightward columns, we can see that $M_{\{12\},\{12\}}$ and $A_{33}$ are determined up to modulo $f$ and $A_{32}/f$ is determined up to modulo $d_1$, and $M_{\{13\}, \{12\}}$ are determined up to modulo $d_2$. In the next lemma we show that we can independently and simultaneously change to a different element in the arithmetic progressions by passing to a similar matrix.

\begin{lemma} \label{lem:arithmeticProgressionChoice}
Given an integral $3\times 3$ matrix $A$, we can find another integral matrix $A' = [A_{ij}']_{i,j}$ with  $A' \sim A$ such that for any $n_1, n_2,n_3,n_4 \in \Z$, 
\begin{center}
\begin{tabular}{ccccc}
	$A_{32}'/f$ & $=$ & $A_{32}/f $&$+$& $d_1 n_1$\\
	$A_{33}'$ &$=$ & $A_{33}$ &$+$& $fn_2$\\
	$M_{\{12\},\{12\}}'$ &$=$& $M_{\{12\},\{12\}}$  &$+$& $fn_3$\\
	$M_{\{13\}, \{12\}}'/f$ &$=$& $M_{\{13\}, \{12\}}/f$ &$+$& $d_2 n_4.$
\end{tabular}
\end{center}
\end{lemma}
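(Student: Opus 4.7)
The plan is to construct $A' = u_L A u_R$ with $u_L, u_R \in \Gamma_\infty$ effecting all four shifts. The key observation that decouples the problem is that right multiplication by an upper-triangular unipotent $u_R$ only adds multiples of earlier columns to later ones and hence preserves every $2\times 2$ minor built from columns $1$ and $2$; while left multiplication by any $u_L \in \Gamma_\infty$ preserves the bottom row in its entirety. Thus the two row-$3$ shifts (for $A_{32}/f$ and $A_{33}$) can be effected by $u_R$, and the two column-$\{1,2\}$-minor shifts (for $M_{\{13\},\{12\}}/f$ and $M_{\{12\},\{12\}}$) by $u_L$, with no interaction between the two groups.

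For the right factor, write $u_R = \bsm 1 & a & b \\ & 1 & c \\ & & 1 \esm$. Direct computation of the third row of $Au_R$ gives
\[
    (Au_R)_{32}/f = A_{32}/f + a\, d_1, \qquad (Au_R)_{33} = A_{33} + f\bigl(b\,d_1 + c \cdot A_{32}/f\bigr).
\]
I set $a = n_1$. For $A_{33}$ the task reduces to solving $b\, d_1 + c\cdot A_{32}/f = n_2$ in integers, and Lemma~\ref{lem:gl3commongcd} supplies the coprimality $\gcd(d_1, A_{32}/f) = 1$ required to invoke B\'ezout.

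Next, put $A^{(1)} = Au_R$ and write $u_L = \bsm 1 & p & q \\ & 1 & r \\ & & 1 \esm$. Since row $3$ is untouched, $M_{\{23\},\{12\}}(A^{(1)}) = d_2 f$, and expanding the modified column-$\{1,2\}$ minors by multilinearity in the rows gives
\[
    M_{\{13\},\{12\}}(u_L A^{(1)}) = M_{\{13\},\{12\}}(A) + p\, d_2 f,
\]
so $p = n_4$ realises the fourth shift, and, writing $m := M_{\{13\},\{12\}}/f$,
\[
    M_{\{12\},\{12\}}(u_L A^{(1)}) = M_{\{12\},\{12\}}(A) + f\bigl[r(m + n_4 d_2) - q\, d_2\bigr].
\]
Lemma~\ref{lem:gl3commongcd} also yields $\gcd(m, d_2) = 1$, hence $\gcd(m + n_4 d_2, d_2) = 1$, and a second B\'ezout application produces integers $r, q$ with $r(m + n_4 d_2) - q\, d_2 = n_3$.

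The one subtle point is that the coefficient multiplying $r$ in the last display is the shifted quantity $m + n_4 d_2$ rather than $m$, because choosing $p = n_4$ has already altered $M_{\{13\},\{12\}}$ before the $M_{\{12\},\{12\}}$-adjustment is made. This poses no obstruction, since the coprimality with $d_2$ that B\'ezout requires is invariant under translation by multiples of $d_2$. The remaining verifications---that right multiplication by $u_R$ leaves $M_{\{12\},\{12\}}$ and $M_{\{13\},\{12\}}$ untouched, and that left multiplication by $u_L$ leaves $A_{32}$ and $A_{33}$ untouched---are immediate consequences of the structural observation in the first paragraph, so assembling $A' := u_L A u_R$ delivers the required matrix.
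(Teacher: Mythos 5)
Your proof is correct and follows the same route as the paper's: a right unipotent factor (via B\'ezout on the coprime pair $d_1$ and $A_{32}/f$) effects the two bottom-row shifts, and a left unipotent factor (via B\'ezout on the coprime pair $d_2$ and $M_{\{13\},\{12\}}/f$, whose coprimality is exactly what Lemma~\ref{lem:gl3commongcd} guarantees) effects the two minor shifts, the two steps being decoupled for precisely the structural reason you state. If anything you are more careful than the paper at one point: the paper's displayed computation of $M_{\{12\},\{12\}}''$ silently drops the cross term $n_4 n_3 s\, M_{\{23\},\{12\}}$ produced by the simultaneous presence of the $(1,2)$- and $(2,3)$-entries of $u_L$, whereas you account for this interaction explicitly by solving $r(m+n_4 d_2)-q\,d_2=n_3$ after fixing $p=n_4$.
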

	
\begin{proof}
	By Bezout's lemma we find $k$ and $\ell$ such that $k A_{31} + \ell A_{32} = \gcd(A_{32}, A_{31}) =f$. Thus we consider
	
\[
	 \left[\begin{matrix}
	A_{11}' & A_{12}' & A_{13}' \\ 
	A_{21}' & A_{22}' & A_{23}' \\
	A_{31}' & A_{32}' & A_{33}'
	\end{matrix}\right] = A\left[\begin{matrix} 1& n_1 & n_2k \\ 0&1& n_2 \ell\\ 0&0&1 \end{matrix} \right],
\]
which has the effect of adding $n_1$ times the first column of $A$ to the  second column of $A$, and $n_1k$ times the first column plus $n_1\ell$ times the second column to the third column of $A$. Therefore $A_{32}' = A_{32} + A_{31}n_2$ which implies $A_{32}'/f = A_{32}/f + d_1n_2$ and $A_{33}' = A_{33} + n_1 (kA_{31} + \ell A_{32})$.

Notice that $M_{\{12\},\{12\}}' = M_{\{12\},\{12\}}$ and $M_{\{13\},\{12\}}' = M_{\{13\},\{12\}}$. In order to change these entries we multiply by elements of $\Gamma_\infty$ on the left. Let us find $r,s \in \Z$ such that $rM_{\{23\},\{12\}}+sM_{\{13\},\{12\}} = f$, then we can obtain 
	\[
		A'' = [A_{ij}'']_{i,j} := \bpm 1&n_4&-n_3 r \\ 0&1&n_3 s\\ 0&0&1 \epm \bpm A_{11} & A_{12} & A_{13} \\ A_{21} & A_{22} & A_{23} \\ A_{31} & A_{32} & A_{33} \epm.
	\] 
	Here the minor $M_{\{12\},\{12\}}$ changes as,
	\begin{multline*}
		M_{\{12\},\{12\}}'' = \left| \begin{matrix}
		A_{11}'' & A_{12}''\\ 
		A_{21}'' & A_{22}''
		\end{matrix} \right| = \left| \begin{matrix}
		A_{11} & A_{12}\\ 
		A_{21} & A_{22}
		\end{matrix} \right| - n_3 r \left| \begin{matrix}
		A_{31} & A_{32}\\ 
		A_{21} & A_{22}
		\end{matrix} \right| + n_3s\left|\begin{matrix}
		A_{11} & A_{12}\\ 
		A_{31} & A_{32}
\end{matrix}\right| \\
 = M_{\{12\},\{12\}} + n_3( r M_{\{23\},\{12\}} + s M_{\{13\},\{12\}}) = M_{\{12\},\{12\}} + n_3 f.
	\end{multline*}
	A simpler calculation also shows $M_{\{13\},\{12\}}'' = M_{\{13\},\{12\}} + d_2f n_4$. This proves the desired equalities.
\end{proof}

\begin{lemma}\label{lem:x3y3}
	Given an integral matrix $A$ of determinant one, $A_{33} M_{\{12\},\{12\}} \equiv 1 \pmod f$, in particular $\gcd(A_{33},f) = \gcd( M_{\{12\},\{12\}}, f) = 1$. 
\end{lemma}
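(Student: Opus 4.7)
The plan is to prove the congruence $A_{33} \cdot M_{\{12\},\{12\}} \equiv 1 \pmod{f}$ by a single application of cofactor (Laplace) expansion of $\det(A) = 1$ along the third row, using that $f$ divides both $A_{31}$ and $A_{32}$.

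More concretely, I would write the cofactor expansion
\[
1 = \det(A) = A_{31}\cdot\bigl(\text{cofactor}_{31}\bigr) + A_{32}\cdot\bigl(\text{cofactor}_{32}\bigr) + A_{33}\cdot M_{\{12\},\{12\}},
\]
where the first two $2\times 2$ cofactors are just some integers (which I will not need to compute). Since by hypothesis $f = \gcd(A_{31},A_{32})$ divides both $A_{31}$ and $A_{32}$, reducing the displayed equation modulo $f$ immediately collapses the first two summands, yielding
\[
A_{33}\cdot M_{\{12\},\{12\}} \equiv 1 \pmod{f}.
\]
The coprimality statement $\gcd(A_{33},f) = \gcd(M_{\{12\},\{12\}},f) = 1$ is a direct corollary: an invertible product modulo $f$ forces each factor to be coprime to $f$.

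There is essentially no obstacle here; the only mild subtlety is to be aware that the assertion uses nothing from the Bruhat cell structure beyond the definition of $f$ as the gcd of the bottom-row entries (Lemma on \texttt{gl3commongcd}), together with $\det(A)=1$. In particular, this computation is really the familiar ``if a row of $A$ is divisible by $f$ apart from one entry, then that entry is coprime to $\det(A) \bmod f$'' principle, applied to the partially divisible third row.
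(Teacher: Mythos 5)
Your proof is correct and is essentially identical to the paper's own argument: both expand $\det(A)=1$ along the third row and reduce modulo $f$ using $f \mid A_{31}$ and $f \mid A_{32}$. Nothing further is needed.
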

\begin{proof}
	Expanding the determinant of $A$ along the last row (or equivalently the last column) we get that
	\begin{multline*}
		1 = \det(A) = A_{31} M_{\{12\},\{23\}} - A_{32} M_{\{12\},\{13\}} +A_{33} M_{\{12\},\{12\}} \\
		= f(d_1 M_{\{12\},\{12\}} - (A_{32}/f) M_{\{12\},\{13\}}) +  A_{33} M_{\{12\},\{12\}}.\qedhere
	\end{multline*}
\end{proof}

Since the congruence subgroup $\Gamma_0(N)$ is defined by $N| A_{31}$, and $N| A_{32}$, which is equivalent to $N | \gcd(A_{31}, A_{32})$, 
we see that fine Kloosterman set decomposition is compatible with the level structure of $\Gamma_0(N)$. 

\begin{theorem}
	Let $\Gamma_0(N)\subseteq \SL_3(\Z)$ be the congruence subgroup such that the last row is congruent to $\bpm 0&0&* \epm \pmod{N}$. Then we have the decomposition
	\[
		\Gamma_\infty \backslash  (\Gamma_0(N) \cap Bw_0B)/ \Gamma_\infty = \bigsqcup_{\substack{f,d_1,d_2 \in \Z - \{0\} \\ N | f}} \Gamma_\infty \backslash \Omega(d_1,d_2,f) /\Gamma_\infty .
	\]
	Thus the only fine Kloosterman sums appearing in the $\SL_3$ Bruggeman-Kuznetsov trace formula are those with $N|f$. 
\end{theorem}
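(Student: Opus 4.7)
The plan is to reduce the statement directly to the fine Kloosterman set decomposition \eqref{eq:KloostermanSetFineDecomposition}, once we understand how the level condition interacts with the bottom row of $A$.

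First I would observe that the defining condition of $\Gamma_0(N)$, namely that the last row be congruent to $\bpm 0 & 0 & * \epm \pmod N$, is precisely the pair of divisibilities $N \mid A_{31}$ and $N \mid A_{32}$. Since the bottom row of $A$ is completely unchanged when we multiply $A$ on the left by an element of $\Gamma_\infty$ (upper-triangular unipotent matrices act on rows only by adding lower rows to upper ones) and its entries are only shifted among themselves when we multiply on the right by an element of $\Gamma_\infty$ (which adds leftward columns to rightward ones), the conditions $N \mid A_{31}, A_{32}$ are invariant under the full $\Gamma_\infty$ two-sided action, and also $\gcd(A_{31}, A_{32})$ is a well-defined invariant of the double coset.

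Next, by Lemma \ref{lem:gl3commongcd} this common gcd equals the invariant $f$ of the stratum $\Omega(d_1, d_2, f)$ containing $A$. Therefore for $A \in Bw_0B \cap \SL_3(\Z)$ we have
\[
A \in \Gamma_0(N) \iff N \mid A_{31} \text{ and } N \mid A_{32} \iff N \mid \gcd(A_{31}, A_{32}) = f,
\]
so membership in $\Gamma_0(N)$ depends only on the stratum, not on the particular representative of the double coset.

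The final step is to intersect this observation with the stratification already established. The decomposition
\[
\Gamma_\infty \backslash (Bw_0B \cap \SL_3(\Z)) / \Gamma_\infty = \bigsqcup_{f, d_1, d_2 \in \Z - \{0\}} \Gamma_\infty \backslash \Omega(d_1, d_2, f) / \Gamma_\infty
\]
is a disjoint union, and we have just shown that a stratum $\Omega(d_1, d_2, f)$ lies entirely inside $\Gamma_0(N)$ if $N \mid f$ and is entirely disjoint from $\Gamma_0(N)$ otherwise. Restricting the union to strata with $N \mid f$ gives exactly the claimed identity. There is no real obstacle here; the only subtlety to be careful about is that the invariance of $\gcd(A_{31}, A_{32})$ under the $\Gamma_\infty$ action on both sides uses Lemma \ref{lem:gl3commongcd} crucially so that the gcd is intrinsic to the double coset (it is not a priori obvious from just the left action, since right multiplication by $\Gamma_\infty$ changes $A_{32}$ by a multiple of $A_{31}$, and one needs that this only rescales within the same gcd class).
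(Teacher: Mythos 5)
Your proof is correct and follows essentially the same route as the paper's (one-line) justification: the $\Gamma_0(N)$ condition is exactly $N \mid A_{31}$ and $N \mid A_{32}$, i.e.\ $N \mid \gcd(A_{31},A_{32}) = f$, which is constant on each stratum, so intersecting the stratification \eqref{eq:KloostermanSetFineDecomposition} with $\Gamma_0(N)$ retains precisely the strata with $N \mid f$. One small correction: the double-coset invariance of $\gcd(A_{31},A_{32})$ does not actually require Lemma \ref{lem:gl3commongcd} — right multiplication by $\Gamma_\infty$ sends $A_{32}$ to $A_{32}+nA_{31}$, which preserves the gcd for elementary reasons, and the equality of that gcd with the stratum's $f$ is just the definition \eqref{eq:FineKloostermanSetGL3}; the lemma's real role is in proving that $f$ also divides the minor $c_2$, which is what makes the decomposition \eqref{eq:KloostermanSetFineDecomposition} exhaustive.
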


Thus we have, for example 
\[
	S_{w_0}^{\Gamma_0(2)}(\vec{m}, \vec{n}; 4,6 ) = \S_{w_0} (\vec{m},\vec{n}; 2,3,2) ,
\]	
which is not equal to $S_{w_0}(\vec{m}, \vec{n}; (c_1,c_2) )$ for any $c_1, c_2 \in \Z$. In short, the coarse Kloosterman sums for $\Gamma_0(N)$ can be given as finite sums of certain full level fine Kloosterman sums.


\subsection{Reduced Word Decomposition and the parametrization of the fine Kloosterman cells}

In the symmetric group $S_3$, let us call the simple transpositions $s_\alpha = (12) , s_\beta = (23)$. Using the reduced word decomposition $w_0 = s_\alpha s_\beta s_\alpha$ we parametrize the fine Kloosterman sets, that is, given 
\begin{equation}\label{eq:xymatrices}
	\gamma_2 = \bpm x_2 & b_2\\ d_2 & y_2 \epm,\quad \gamma_3 = \bpm x_3 & D \\ f & y_3 \epm,  \quad \gamma_1 = \bpm x_1 & b_1 \\ d_1 & y_1 \epm,
\end{equation}
we use the product 
\begin{equation}\label{eq:iproduct}
\iota_\alpha(\gamma_2) \iota_\beta(\gamma_3) \iota_\alpha(\gamma_1)
\end{equation}
to express elements of $\Omega(d_1,d_2,f)$.

Every product of the form \eqref{eq:iproduct} with the matrices \eqref{eq:xymatrices} in $\SL_2(\Z)$ gives an element of $\Omega(d_1,d_2,f)$. It is, however not true that any element of $\Omega(d_1,d_2,f)$ can be expressed as such a product. Firstly it is sometimes necessary to pick the matrices \eqref{eq:xymatrices} in $\SL_2(\Q)$, and secondly some matrices $A$ cannot be obtained in such a manner as can see by taking a matrix with $D =0$ in the notation of Proposition \ref{prop:BottSamelsonLikeDecomposition} below.
However it is possible to find a representative $A' \in \Gamma_\infty A \Gamma_\infty$ that factorizes.

Call $A_{33}M_{33} -1 = fD$.  By Lemma \ref{lem:x3y3} we see that $D$ is an integer. Using Lemma \ref{lem:arithmeticProgressionChoice} we can make sure that $D \neq 0$.

Specializing the result of Proposition \ref{prop:BruhatCoordinatesViaExterior} to our case, we have that for any matrix $A \in \SL_3$,
\[
	A = 	 \bpm 1 & \left. M_{23} \middle/ M_{13}\right. & A_{11} /A_{31} \\  & 1 & A_{21}/A_{31} \\ & &1 \epm w_0
		 \bpm A_{31} &&\\ &\frac{M_{13}}{A_{31}}&\\ &&\frac{1}{  M_{13}}  \epm 
		 \bpm 1 & A_{32}/A_{31} & A_{33}/A_{31} \\
		 & 1 & \left.M_{12} \middle/ M_{13} \right. \\ &&1\epm .
\]

\begin{proposition} \label{prop:BottSamelsonLikeDecomposition}
	Let $A$ be an integral matrix in the big Bruhat cell. Assume (by changing to a different element in the double coset $\U_3(\Z)A\U_3(\Z)$ if necessary) that $fD := A_{33}M_{33} - 1 \neq 0$. We have the explicit decomposition,
	\begin{multline*}
		A = \ea{ M_{23}/ f }{d_2} s_\alpha h_\alpha(d_2) \ea{A_{23}/D}{d_2} \eb{M_{33}}{f} s_\beta h_\beta(f) \\ 		 
	\times	 \eb{A_{33}}{f} \ea{M_{32}/D}{d_1} s_\alpha h_\alpha(d_1) \ea{\left.A_{32}\middle/ f \right.}{d_1}.
	\end{multline*}
\end{proposition}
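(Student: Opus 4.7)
The strategy is to recognize the right-hand side as $\iota_\alpha(\gamma_2)\iota_\beta(\gamma_3)\iota_\alpha(\gamma_1)$ for explicit $\SL_2$ matrices $\gamma_i$, and then verify this product equals $A$ by direct matrix multiplication. Applying the standard $\SL_2$ Bruhat factorization
\[
\bpm x & b \\ d & y\epm = \bpm 1 & x/d \\ 0 & 1\epm \bpm 0 & -1 \\ 1 & 0\epm \bpm d & 0 \\ 0 & 1/d\epm \bpm 1 & y/d \\ 0 & 1\epm \qquad (d \neq 0,\ b = (xy-1)/d)
\]
inside each of the root embeddings $\iota_\alpha,\iota_\beta$ shows that the right-hand side of the proposition is precisely $\iota_\alpha(\gamma_2)\iota_\beta(\gamma_3)\iota_\alpha(\gamma_1)$ with
\[
\gamma_2 = \bpm M_{23}/f & \ast \\ d_2 & A_{23}/D\epm,\quad \gamma_3 = \bpm M_{33} & D \\ f & A_{33}\epm,\quad \gamma_1 = \bpm M_{32}/D & \ast \\ d_1 & A_{32}/f\epm,
\]
where the $\ast$'s are determined by $\det\gamma_i = 1$. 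The determinant condition $\det\gamma_3 = M_{33}A_{33} - fD = 1$ is precisely the definition of $D$ from Lemma~\ref{lem:x3y3}, while $d_1,d_2\neq 0$ holds because $A$ is in the big cell; the hypothesis $D\neq 0$ is exactly what is needed for the middle factor $\gamma_3$ to admit an $\SL_2$ Bruhat factorization.

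This reduces the proposition to the single matrix identity $A=\iota_\alpha(\gamma_2)\iota_\beta(\gamma_3)\iota_\alpha(\gamma_1)$, which I would verify entry by entry. First compute $P:=\iota_\beta(\gamma_3)\iota_\alpha(\gamma_1)$: its bottom row equals $(fd_1,A_{32},A_{33})=(A_{31},A_{32},A_{33})$ by \eqref{eq:d1d2fDefinition}, so the third row of $A$ is automatic once $\iota_\alpha(\gamma_2)$ is applied on the left (which fixes the last row). Likewise the $(2,3)$ and $(3,3)$ entries of $\iota_\alpha(\gamma_2)P$ collapse to $A_{23}$ and $A_{33}$ after a single line of arithmetic.

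The remaining six entries of the top two rows produce polynomial identities in the entries and $2\times 2$ minors of $A$. After eliminating the $\ast$-entries via $\det\gamma_1=\det\gamma_2=1$, each identity simplifies to a cofactor expansion of $\det A=1$ along a suitable row or column, combined with $M_{33}A_{33}=1+fD$. For example, the $(1,3)$ entry of the product comes out as $b_2 D$ with $b_2=(M_{23}A_{23}/(fD)-1)/d_2$; matching it to $A_{13}$ requires $A_{13}fd_2=M_{23}A_{23}-fD$, which is precisely the expansion $1=A_{13}M_{13}-A_{23}M_{23}+A_{33}M_{33}$ along the third column (using $M_{13}=fd_2$ from \eqref{eq:d1d2fDefinition}). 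The other five entries follow the same pattern.

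The main obstacle is bookkeeping: the six identities individually involve different minors and different row/column expansions, so keeping the sign and index conventions straight is the real work, though each reduces in a few lines to $\det A=1$ plus Lemma~\ref{lem:x3y3}. A conceptually cleaner alternative would be to start from the Bruhat decomposition $A=u_Lw_0tu_R$ displayed just above the proposition, collapse the three Weyl involutions in the right-hand side using $s_\alpha s_\beta s_\alpha=w_0$, and propagate the unipotent and torus factors through by Chevalley commutation; I would use this as a cross-check on the direct calculation.
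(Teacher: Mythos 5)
Your proof is correct and takes essentially the same route as the paper: both identify the right-hand side as $\iota_\alpha(\gamma_2)\iota_\beta(\gamma_3)\iota_\alpha(\gamma_1)$ with the $\gamma_i$ given by the indicated entries and $2\times 2$ minors of $A$, and then confirm that the product reproduces $A$ (the paper obtains the coordinates by computing $A\vec{e}_I$ in the exterior algebra under the same Ansatz and then asserts the multiplication). Two cosmetic corrections: the hypothesis $D\neq 0$ is not what makes $\gamma_3$ Bruhat-factorizable (its lower-left entry is $f\neq 0$), but is needed because $D$ sits in the denominators of the entries of $\gamma_1$ and $\gamma_2$; and a few of the entry checks, e.g.\ the $(2,1)$ entry, reduce to Desnanot--Jacobi type identities such as $M_{33}M_{12}-M_{13}M_{32}=A_{21}\det A$ rather than to a single cofactor expansion along a row or column, though all of them still follow from $\det A=1$.
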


This proposition states that the double cosets 
\[
\Gamma_\infty \iota_\alpha(\gamma_2) \iota_\beta(\gamma_3) \iota_\alpha(\gamma_1) \Gamma_\infty,
\]
with $\gamma_i$ as in \eqref{eq:xymatrices} with $b_i = \frac{x_i y_i -1}{d_i}$ for $i = 1,2$ and $b_3 = \frac{x_3 y_3 -1 }{f} = D$ and $x_2, y_1, x_3, y_3, b_3 \in \Z$ and $x_1, y_2 \in \frac{1}{D} \Z$ 
gives a surjective map onto $\Gamma_\infty\backslash \Omega(d_1,d_2,f)/ \Gamma_\infty$. Furthermore it is enough to take a single representative  $y_1 \pmod{d_1}, x_2 \pmod{d_2}$ and $x_3, y_3 \pmod{f}$.

\begin{proof}[Proof of Proposition \ref{prop:BottSamelsonLikeDecomposition}]
We begin with  Ansatz that $A$ is of the form $\iota_\alpha(\gamma_2) \iota_\beta(\gamma_3) \iota_\alpha(\gamma_1)$ with the coordinates of $\gamma_2, \gamma_3$ and $\gamma_1$ as in \eqref{eq:xymatrices}. Using the word-based factorization coordinates, we calculate
\begin{equation}\label{eq:xycoordinates}
\begin{split}
	A\vec{e}_{1,2} &= \iota_\alpha(\gamma_2) \iota_\beta(\gamma_3) \iota_\alpha(\gamma_1) \vec{e}_{1,2}= x_3 \vec{e}_{1,2} + x_2f \vec{e}_{1,3} + fd_2\vec{e}_{2,3},\\
A \vec{e}_{1,3} &= x_1 D \vec{e}_{1,2} + (x_1x_2 y_3 + d_1b_2)\vec{e}_{1,3} + (x_1 y_3 d_2 + d_1 y_2 )\vec{e}_{2,3},\\
A \vec{e}_{2,3} &= b_1D \vec{e}_{1,2} +  (b_1y_3x_2 + y_1b_2) \vec{e}_{1,3} + (b_2y_3 d_2 +  y_1y_2) \vec{e}_{2,3},\\
A\vec{e}_1 & = (x_1 x_2 + x_3  b_2 d_1)\vec{e}_1 + (x_1 d_2 + y_2 x_3 d_1) \vec{e}_2  + d_1f \vec{e}_3,\\
A \vec{e}_2 &= (b_1 x_2 + y_1 x_3 b_2)\vec{e}_1 + (b_1 d_2 + y_1 x_3 y_2) \vec{e}_2 + y_1 f\vec{e}_3,\\
A\vec{e}_3 &= Db_2 \vec{e}_1 + Dy_2 \vec{e}_2 + y_3 \vec{e}_3.
\end{split}
\end{equation}
Therefore one must have $A_{31} = d_1f$, $M_{13} = d_2f$, as well as, 
\begin{align*}
x_2 = \frac{\langle \vec{e}_{1,3}^* , A \vec{e}_{1,2} \rangle}{f} = \frac{\left| \begin{matrix} A_{11} & A_{12} \\ A_{31} & A_{32} \end{matrix}\right|}{f}, && 
y_2 = \frac{\langle \vec{e}_{2}^*, A\vec{e}_{3} \rangle}{D} = \frac{A_{23}}{D} ,
\\
x_3 = \langle \vec{e}_{1,2}^*, A \vec{e}_{1,2} \rangle = \left|\begin{matrix} A_{11} & A_{12} \\ A_{21} & A_{22}  \end{matrix}\right|, 
&& y_3 =  \langle \vec{e}_{3}^*, A\vec{e}_{3} \rangle = A_{33} ,
\\
x_1 = \frac{\langle \vec{e}_{1,2}^*, A\vec{e}_{1,3} \rangle}{D} = \frac{\left| \begin{matrix} A_{11} & A_{13}\\ A_{21} & A_{23} \end{matrix} \right|}{D},,
&& y_1 = \frac{\langle \vec{e}_{3}^*, A\vec{e}_{2} \rangle}{f} = \frac{A_{32}}{f}.
\end{align*}

Also from the fact that $f$ divides $A_{32}$ and $M_{23}$ we deduce that $x_2, x_3, y_3, y_1 \in \Z$ and $y_2,x_1 \in \tfrac{1}{D}\Z$. 

Multiplying these gets $A$ back, justifying our Ansatz. 
\end{proof}

Let $d_1, d_2, f$ be nonzero integers. Proposition \ref{prop:BottSamelsonLikeDecomposition} shows us that we can choose an $A$ from every $\U(\Z)$--double coset in $\Omega(d_1,d_2,f)$, such that  $A$ is of the form $\iota_\alpha(\gamma_2) \iota_\beta(\gamma_3) \iota_\alpha(\gamma_1)$ with the coordinates of $\gamma_2, \gamma_3$ and $\gamma_1$ as in \eqref{eq:xymatrices}. We can also assume $x_3$ to be any element in the arithmetic progression $A_{33} + f\Z$,  and similarly for $y_3 \in  M_{33} + f\Z$. Using Lemma \ref{lem:x3y3} both of these arithmetic progressions contain infinitely many primes, we can choose $x_3, y_3$ as primes larger than $d_1d_2f$. In particular we may assume that $(x_3, d_1d_2f) = (y_3, d_1d_2f) = 1$.

Also as $x_2, y_1$ can be chosen as arbitrary elements in $M_{12}/f + d_2\Z$ and $A_{32}/f + d_1 \Z$ respectively, we may also assume $x_2$ and $y_1$ to be relatively prime to $d_1d_2f$. 

Let us now express the coordinates of the Bruhat decomposition using these coordinates. So we write $A = u_L w_0 t u_R$ and also $A = \iota_\alpha(\gamma_2) \iota_\beta(\gamma_3) \iota_\alpha(\gamma_1)$. 

From Proposition \ref{prop:BruhatCoordinatesViaExterior} we know that
\[
	u_L = \bpm 1& \frac{\langle \vec{e}^*_{1,3},A \vec{e}_{1,2}\rangle}{t_1t_2} & \frac{\langle \vec{e}^*_{1}, A\vec{e}_1 \rangle }{t_1}\\ &1&\frac{\langle \vec{e}^*_2, A\vec{e}_1\rangle }{t_1} \\ &&1 \epm \qquad \text{ and } \qquad u_R = \bpm 1& \frac{\langle \vec{e}_3^*, A \vec{e}_2 \rangle }{t_1} & \frac{\langle \vec{e}^*_3, A\vec{e}_3\rangle }{t_1}\\ & 1& \frac{\langle \vec{e}^*_{2,3}, A\vec{e}_{1,3} \rangle }{t_1t_2} \\ &&1 \epm.
\]
Denoting $u = x_1d_2 + y_2 x_3 d_1$, and $v = x_1 y_3 d_2 + y_2 d_1$, we look at \eqref{eq:xycoordinates} and deduce
\begin{align*}
	&\langle \vec{e}^*_{1,3}, A \vec{e}_{1,2} \rangle  = x_2 f,   
	&\langle \vec{e}^*_{2,3},A\vec{e}_{1,3} \rangle  = x_1 y_3 d_2 + d_1 y_2 = v, \\	
	&\langle \vec{e}^*_{3}, A\vec{e}_2 \rangle = y_1 f ,
	&\langle \vec{e}_{2}^*, A\vec{e}_1 \rangle =   x_1 d_2 + y_2 x_3 d_1 = u ,
\end{align*}
and
\begin{equation*}
	\langle \vec{e}^*_{1}, A\vec{e}_1 \rangle = (x_1 x_2 + x_3  b_2 d_1)  = \frac{x_2 u - x_3 d_1}{d_2}, \qquad \qquad \langle \vec{e}^*_{3}, A\vec{e}_3 \rangle   = y_3.
\end{equation*}
Notice that $u,v\in \Z$. Combining these calculations, we obtain the following result.

\begin{proposition}
	Given a matrix $A \in \SL_3(\Z)$, choose $d_1,d_2,f$ as in \eqref{eq:d1d2fDefinition}. After replacing $A$ with $A' \sim A$ if necessary, we can write $A =  \iota_\alpha(\gamma_2) \iota_\beta(\gamma_3) \iota_\alpha(\gamma_1)$ with $\gamma_i$ as in \eqref{eq:xymatrices}, and $u,v\in \Z$ as above its Bruhat decomposition has the coordinates
	\begin{equation} \label{eq:xyuvCoordinates}
		A = \bpm 1&\frac{x_2}{d_2} & \frac{x_2 u - x_3 d_1}{d_1d_2f}\\ & 1& \frac{u}{d_1f}\\ &&1 \epm \bpm &&1\\&-1&\\ 1&&\epm \bpm d_1f &&\\ &\frac{d_2}{d_1} & \\ &&\frac{1}{d_2f} \epm \bpm 1& \frac{y_1}{d_1} & \frac{y_3}{d_1f} \\
		&1& \frac{v}{d_2 f} \\ &&1 \epm,
	\end{equation}
	with all the visible parameters integral, $x_2,y_1,x_3,y_3$ relatively prime to $d_1d_2f$, and $x_3 y_3 \equiv 1  \pmod f$.
\end{proposition}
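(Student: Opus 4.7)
The plan is to assemble this statement directly from the work already done in this section, verifying that everything fits together. First I would invoke Proposition \ref{prop:BottSamelsonLikeDecomposition} to replace $A$ by an element of $\Gamma_\infty A \Gamma_\infty$ (if needed) so that $fD := A_{33}M_{\{12\},\{12\}} - 1 \neq 0$ and so that $A$ admits the factorization
\[
A = \iota_\alpha(\gamma_2)\iota_\beta(\gamma_3)\iota_\alpha(\gamma_1),
\]
with the matrices $\gamma_i$ of the shape \eqref{eq:xymatrices}. Lemma \ref{lem:x3y3} combined with Lemma \ref{lem:arithmeticProgressionChoice} guarantees that we may further adjust $x_3, y_3$ within their arithmetic progressions modulo $f$ and $x_2, y_1$ within their arithmetic progressions modulo $d_2$ and $d_1$ respectively. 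By Dirichlet's theorem on primes in arithmetic progressions (or equivalently by a one-line use of Lemma \ref{lem:x3y3} plus Bezout), these progressions contain representatives coprime to $d_1d_2f$, so we may insist that $x_2, y_1, x_3, y_3$ are all coprime to $d_1d_2f$. The congruence $x_3 y_3 \equiv 1 \pmod f$ is then just the rewriting of $fD = x_3 y_3 - 1$, which is the defining relation of $D$.

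Next I would carry out the Bruhat decomposition computation. Specializing Proposition \ref{prop:BruhatCoordinatesViaExterior} to the $\SL_3$, $w=w_0$ case with $t_1 = d_1f$ and $t_1 t_2 = d_2 f$, the entries of $u_L$ and $u_R$ are read off as ratios of the minors $\langle \vec{e}^*_I, A\vec{e}_J\rangle$. All of these minors were computed explicitly in \eqref{eq:xycoordinates} in terms of $x_i, y_i, b_i, d_1, d_2, f$. Plugging these expressions into Proposition \ref{prop:BruhatCoordinatesViaExterior} yields exactly the matrix written in \eqref{eq:xyuvCoordinates}, provided one recognises $u := x_1d_2 + y_2 x_3 d_1 = \langle \vec{e}^*_2, A\vec{e}_1\rangle$ and $v := x_1 y_3 d_2 + y_2 d_1 = \langle \vec{e}^*_{2,3}, A \vec{e}_{1,3}\rangle$, both of which are integers because $A$ is integral; these two identifications are the only nontrivial bookkeeping in the computation.

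The remaining integrality claim concerns the upper-row entries
\[
\frac{x_2 u - x_3 d_1}{d_1d_2 f} \qquad\text{and}\qquad \frac{y_3}{d_1f},
\]
and their analogues. These are immediate because they are matrix entries of $A$: the formula $x_1 x_2 + x_3 b_2 d_1 = (x_2 u - x_3 d_1)/d_2$ is a direct algebraic simplification using $b_2 = (x_2 y_2 - 1)/d_2$ (which was forced by $\det \gamma_2 = 1$), and then Proposition \ref{prop:BruhatCoordinatesViaExterior} identifies this quantity with $A_{11}/A_{31}$, whose numerator is an integer. A parallel check handles the other entries.

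The only delicate step is the identification of $u$ and $v$ as \emph{integral} quantities: a priori they are $\Z$-linear combinations of $x_1,y_2 \in \tfrac1D\Z$, so one must notice that the specific combinations forming $u$ and $v$ are precisely the minors $\langle \vec{e}^*_2, A\vec{e}_1\rangle$ and $\langle \vec{e}^*_{2,3}, A\vec{e}_{1,3}\rangle$ of the integral matrix $A$. Once this observation is in place the proof reduces to direct substitution into Proposition \ref{prop:BruhatCoordinatesViaExterior}, and the coprimality and congruence conditions are inherited from the preceding discussion; there is no further obstacle.
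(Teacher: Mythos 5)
Your proposal is correct and follows essentially the same route as the paper, whose own ``proof'' of this proposition is exactly the computation preceding it: Proposition \ref{prop:BottSamelsonLikeDecomposition} supplies the factorization after passing to $A'\sim A$, Lemmas \ref{lem:x3y3} and \ref{lem:arithmeticProgressionChoice} justify adjusting $x_2,y_1,x_3,y_3$ within their arithmetic progressions to achieve coprimality with $d_1d_2f$ and give $x_3y_3\equiv 1\pmod f$, and Proposition \ref{prop:BruhatCoordinatesViaExterior} together with \eqref{eq:xycoordinates} yields the displayed $u_L$ and $u_R$. You also correctly isolate the one genuinely nontrivial point, namely that $u=\langle \vec{e}^*_{2},A\vec{e}_{1}\rangle$ and $v=\langle \vec{e}^*_{2,3},A\vec{e}_{1,3}\rangle$ are minors of the integral matrix $A$ and hence integers.
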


In the next proposition we give the conditions under which the coordinates in \eqref{eq:xyuvCoordinates} give rise to the same $\Gamma_\infty$--double coset.

\begin{proposition} \label{prop:congruenceEquations}
	Given nonzero integers $d_1, d_2, f$ and $y_1 \in (\Z/ d_1\Z)^*$, $x_2 \in (\Z/d_2\Z)^*$ and $x_3, y_3 \in \Z/f\Z$ satisfying $x_3y_3 \equiv 1 \pmod f$, the product in \eqref{eq:xyuvCoordinates} gives rise to an integral matrix if and only if the following congruence conditions are satisfied:
	\begin{align}
    ux_2 &\equiv d_1 x_3 \pmod{d_2},
\label{eq:firstcongequation}\\ 
    uy_1 &\equiv d_2 \pmod{d_1},\\
    ux_2y_1 &\equiv d_1x_3 y_1 + d_2 x_2 \pmod{d_1d_2}, \label{eq:ucongequation}\\
    v &\equiv uy_3 \pmod{d_1f},\\
    vx_2 &\equiv u y_3 x_2 + d_1(1 - x_3y_3) \pmod{d_1d_2f}. \label{eq:vcongequation}
	\end{align}
	Furthermore a matrix $B$ that formed in the same way from the coordinates $Y_1, X_2, U, V$ and $x_3, y_3$ is in $\Gamma_\infty A \Gamma_\infty$ if and only if 
	\begin{align*}
		y_1 &\equiv Y_1 \pmod{d_1},\\
		x_2 &\equiv X_2 \pmod{d_2},\\
		u &\equiv U \pmod{d_1d_2f},\\
		v & \equiv V \pmod{d_1d_2f}.
	\end{align*}
\end{proposition}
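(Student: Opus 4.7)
The proof naturally splits into the integrality claim and the double-coset claim. For integrality, the approach is direct computation: multiplying out the three factors in \eqref{eq:xyuvCoordinates} yields the third row $(d_1f,fy_1,y_3)$ and the $(2,1)$ entry $u$, which are automatically integers; the remaining five entries simplify to
\[
\tfrac{ux_2-d_1x_3}{d_2},\ \tfrac{uy_1-d_2}{d_1},\ \tfrac{uy_3-v}{d_1f},\ \tfrac{ux_2y_1-d_1x_3y_1-d_2x_2}{d_1d_2},\ \tfrac{d_1(1-x_3y_3)+x_2(uy_3-v)}{d_1d_2f},
\]
and integrality of each is exactly the corresponding one of the five congruences \eqref{eq:firstcongequation}--\eqref{eq:vcongequation}. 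Although the $\pmod{d_1}$ and $\pmod{d_2}$ conditions together imply \eqref{eq:ucongequation} modulo $\mathrm{lcm}(d_1,d_2)$, they do not imply it modulo $d_1d_2$ in general, so all five conditions are genuinely needed.

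For the classification of double cosets, I would read each coordinate off as a matrix entry or minor of $A$: $y_3=A_{33}$, $x_3=M_{\{12\},\{12\}}$, $y_1=A_{32}/f$, $x_2=M_{\{13\},\{12\}}/f$, $u=A_{21}$, and $v=A_{21}A_{33}-A_{23}A_{31}$. The invariance modulo $d_1,d_2,f,f$ of $y_1,x_2,x_3,y_3$ respectively under $\Gamma_\infty\times\Gamma_\infty$ follows from Lemma \ref{lem:arithmeticProgressionChoice}. For $u$ and $v$, I would compute the action directly: left multiplication by $\bsm 1&\ell_{12}&\ell_{13}\\ &1&\ell_{23}\\ &&1\esm$ leaves $v$ unchanged (the changes in $A_{21}A_{33}$ and $A_{23}A_{31}$ cancel) and sends $u\mapsto u+\ell_{23}d_1f$, while right multiplication by a similar matrix fixes $u$ and sends $v\mapsto v+r_{23}d_2f$. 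Imposing in addition that $x_3,y_3$ remain fixed exactly (not merely modulo $f$, since they are shared by both sides of the claimed equivalence) forces $\ell_{23}$ and $r_{23}$ to be divisible by $d_2$ and $d_1$ respectively, promoting the moduli for $u,v$ from $d_1f,d_2f$ up to $d_1d_2f$.

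For the ``if'' direction, given tuples with the stated matching residues, I would produce $L,R\in\Gamma_\infty$ explicitly by adjusting each coordinate in turn (first $y_1$ via $r_{12}$, then $x_2$ via $\ell_{12}$, then $u$ via $\ell_{23}$, then $v$ via $r_{23}$), using the off-diagonal entries $\ell_{13},r_{13}$ to absorb collateral effects on $x_3,y_3$. The coprimality of $x_2,y_1,x_3,y_3$ with $d_1d_2f$, arranged in the paragraph following Proposition \ref{prop:BottSamelsonLikeDecomposition}, is what ensures that the necessary linear conditions modulo $d_1d_2f$ can be solved simultaneously.

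The main technical obstacle is the linked bookkeeping between $u,v$ and $x_3,y_3$. A naive $\ell_{23}$-shift changes $u$ by $\ell_{23}d_1f$ but also perturbs $x_3=M_{\{12\},\{12\}}$ by $\ell_{23}\cdot(fx_2)$ (via the identity $A_{11}A_{32}-A_{12}A_{31}=fx_2$), and preserving $x_3$ on the nose then requires an auxiliary right multiplication. Only those $\ell_{23}$ divisible by $d_2$ survive this compensation, and this is precisely what produces the modulus $d_1d_2f$ rather than the weaker $d_1f$ that bare invariance of $A_{21}$ would give.
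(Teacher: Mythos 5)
Your integrality argument is exactly the paper's: multiply out \eqref{eq:xyuvCoordinates} and match each of the five non-obviously-integral entries with one congruence. For the double-coset classification, however, you take a genuinely different route. The paper never touches the minors of $A$ at this stage: it uses the uniqueness of the big-cell Bruhat factorization, so that $B\in\Gamma_\infty A\Gamma_\infty$ is \emph{equivalent} to $u_L(A)u_L(B)\inv\in\Gamma_\infty$ and $u_R(B)\inv u_R(A)\in\Gamma_\infty$, and then reads each congruence off as integrality of an entry of these two explicit unipotent products; the promotion of the modulus for $u$ from $d_1f$ to $d_1d_2f$ comes from the upper-right entry $\frac{x_2(u-U)+d_1(X_3-x_3)}{d_1d_2f}$ after setting $x_3=X_3$, together with $\gcd(x_2,d_2)=1$. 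You instead identify each coordinate with an entry or minor of $A$ and track the two-sided $\Gamma_\infty$-action on those invariants; your mechanism for the modulus promotion (preserving $x_3=M_{\{12\},\{12\}}$ forces $d_2\mid\ell_{23}$ via $\ell_{23}fx_2\equiv 0\pmod{d_2f}$) is the same coprimality phenomenon in different clothing, and your transformation laws for $u$, $v$, $x_2$, $y_1$, $x_3$, $y_3$ under left and right multiplication all check out. The trade-off is in the converse direction: in the paper's formulation the ``if'' part is automatic, since integrality of the two products \emph{is} membership in the double coset, whereas your approach leaves the ``if'' direction as a separate constructive step (building $L,R$ coordinate by coordinate while compensating the collateral shifts of $x_3,y_3$ through $\ell_{13},r_{13}$). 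That construction does go through with the coprimality assumptions you cite, but it is the one place where your write-up is a sketch rather than a proof, and it is precisely the work the paper's Bruhat-coordinate framing lets one avoid.
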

\begin{remark}
Notice that we have forced an equality in the $x_3, y_3$ coordinates. This is because $A\sim B$ for $A, B \in \Omega(d_1,d_2,f)$ implies that $\langle \vec{e}_{1,2}^*, A\vec{e}_{1,2} \rangle \equiv \langle \vec{e}_{1,2}^*,B\vec{e}_{1,2} \rangle \pmod{f}$ and $\langle \vec{e}_{3}^*, A\vec{e}_{3} \rangle  \equiv  \langle \vec{e}_{3}^*, B\vec{e}_{3} \rangle  \pmod{f}$. Furthermore if these congruences are satisfied equality can be achieved via Lemma \ref{lem:arithmeticProgressionChoice}, by switching to a representative $B' \sim B$. Thus we can force the equalities $x_3 = X_3$ and $y_3 = Y_3$. 
\end{remark}

\begin{remark}
	If we choose $y_1, x_2$ to be relatively prime to $d_1d_2f$ (which we can) then \eqref{eq:ucongequation} and \eqref{eq:vcongequation} imply the remaining congruence relations.
\end{remark}

\begin{proof}[Proof of Proposition \ref{prop:congruenceEquations}]
Multiply the product in \eqref{eq:xyuvCoordinates}, obtaining
\[
	A = \left(\begin{matrix}
\frac{u x_{2} - d_{1} x_{3}}{d_{2}} & \frac{u x_{2}y_1 - d_{1} x_{3} y_{1} - x_2 d_2}{d_{1} d_{2}} & \frac{-v x_{2} + ux_2 y_3 + d_1 (1 - x_3 y_3)}{d_{1} d_{2} f}  \\
u & \frac{u y_{1} - d_2}{d_{1}}  & \frac{u y_{3} - v}{d_{1} f}  \\
d_{1} f & f y_{1} & y_{3}
\end{matrix}\right).
\]
This is an integral matrix if and only if the congruences \eqref{eq:firstcongequation}--\eqref{eq:vcongequation} are satisfied. 

Two matrices $A = u_L(A)w_0tu_R(A)$ and $B = u_L(B)w_0tu_R(B)$ are in the same $\Gamma_\infty$--double coset if and only if $u_L(A)u_L(B)\inv, u_R(B)\inv u_R(A) \in \Gamma_\infty.$
Denoting the coordinates of $B$ via capital letters, $u_L(A) u_L(B)\inv\in \Gamma_\infty$ means 
\[
\bpm 1&\frac{x_2}{d_2} & \frac{x_2u-d_1x_3}{d_1d_2f}\\ &1& \frac{u}{d_1f} \\ &&1 \epm \bpm 1& -\frac{X_2}{d_2} & \frac{d_1X_3}{d_1d_2 f}\\ &1&-\frac{U}{d_1 f} \\ &&1 \epm = \bpm 1& \frac{x_2 - X_2}{d_2} & \frac{x_2 (u - U) + d_1(X_3-x_3) }{d_1d_2 f} \\ & 1& \frac{u - U}{d_1 f} \\ &&1\epm\in \Gamma_\infty.
\]
This forces $x_2 \equiv X_2 \pmod{d_2}$ and $u \equiv U \pmod{d_1f}$. Now taking $x_3 = X_3$ the upper right corner forces us to have $d_1d_2f | x_2(u - U)$. Notice that we already have $u - U$ divisible by $d_1f$ and $x_2$ is relatively prime to $d_2$. Therefore the above matrix is integral if and only if
\[
	x_2 \equiv X_2 \pmod{d_2} \qquad u \equiv U \pmod{d_1d_2f}. 
\]

Similarly $u_R(B)\inv u_R(A) \in \Gamma_\infty$, i.e.,
\[
	 \bpm 1& -\frac{Y_1}{d_1} & \frac{-Y_3d_2 +Y_1V}{d_1d_2f} \\ & 1 & -\frac{V}{d_2f} \\ &&1\epm \bpm 1& \frac{y_1}{d_1} &\frac{y_3}{d_1f}\\ &1&\frac{v}{d_2f} \\ & & 1 \epm = \bpm 1&\frac{y_1 - Y_1}{d_1} & \frac{(y_3 - Y_3)d_2 + Y_1(V - v)}{d_1d_2f } \\ &1& \frac{v-V}{d_2f}\\ &&1\epm \in \Gamma_\infty
\]
implies (after taking $y_3 = Y_3$) that
\[
	y_1 \equiv Y_1 \pmod{d_1} \qquad v \equiv V \pmod{d_1d_2f}.\qedhere
\]
\end{proof}

From now on we will assume $x_2$ and $y_1$ are chosen to be relatively prime to $d_1d_2f$. 

Since the equation \eqref{eq:ucongequation} determines $u$ up to $d_1d_2$ but $u$ determines the double coset up to modulo $d_1d_2f$, the set of allowed solutions are
\begin{equation} \label{eq:uparametrization}
	u \equiv d_1 x_3 \overline{x_2} + d_2 \overline{y_1} + d_1d_2 k \pmod{d_1d_2 f},
\end{equation}
with $k \in \Z / f\Z$. 

This then determines $v \pmod{d_1d_2f}$ completely and we have for each such $u$, 
\[
	v \equiv (	d_1 x_3 \overline{x_2} + d_2 \overline{y_1} + d_1 d_2k  ) y_3 + d_1(1 - x_3 y_3)\overline{x_2} \equiv  d_2 \overline{y_1} y_3 + d_1 \overline{x_2} + d_1 d_2 y_3 k  \pmod{d_1d_2 f}.
\]

This gives a parametrization of the fine Kloosterman cells.

\begin{corollary} \label{cor:FineKloostermanCellBijection}
	Let $d_1, d_2, f$ be nonzero integers, and fix the sets $\mathcal{Y}_{d_1}$ and $\mathcal{X}_{d_1}$, a complete set of reduced residue class representatives $y_1 \pmod{d_1}^*, x_2 \pmod{d_2}^*$ such that $x_2, y_1,$ are relatively prime to $d_1d_2f$. Let $\mathcal{F}_f = \{(x_3,y_3) \in \{f+1, \ldots, 2f\} | x_3 y_3  \equiv 1 \pmod{f}\}$ and let $k \in \mathcal{K}_f$ simply run through integers from $0$ to $f-1$. There is a bijection 
\[
\begin{array}{ccc}
		\mathcal{X}_{d_2} \times \mathcal{Y}_{d_1} \times \mathcal{F}_f \times \mathcal{K}_F & \longrightarrow & \Gamma_\infty \backslash \Omega(d_1,d_2,f) / \Gamma_\infty \\
		&&\\
		(x_2, y_1, (x_3, y_3), k) & \longmapsto & \Gamma_\infty \left(\begin{matrix}
\frac{u x_{2} - d_{1} x_{3}}{d_{2}} & \frac{u x_{2}y_1 - d_{1} x_{3} y_{1} - x_2 d_2}{d_{1} d_{2}} & \frac{-v x_{2} + ux_2 y_3 + d_1 (1 - x_3 y_3)}{d_{1} d_{2} f}  \\
u & \frac{u y_{1} - d_2}{d_{1}}  & \frac{u y_{3} - v}{d_{1} f}  \\
d_{1} f & f y_{1} & y_{3}
\end{matrix}\right)\Gamma_\infty,
\end{array}
\]
	where
	$u = d_1 x_3 \overline{x_2} + d_2 \overline{y_1} + d_1d_2 k$ and $v= d_2 \overline{y_1} y_3 + d_1 \overline{x_2} + d_1 d_2 y_3 k$.
\end{corollary}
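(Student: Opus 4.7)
The proof will establish well-definedness, injectivity, and surjectivity of the claimed map, with Propositions \ref{prop:BottSamelsonLikeDecomposition} and \ref{prop:congruenceEquations} doing essentially all of the heavy lifting.

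For well-definedness, given a tuple $(x_2, y_1, (x_3, y_3), k)$ I must verify that the displayed matrix is an element of $\SL_3(\Z)$ lying in $\Omega(d_1, d_2, f)$. The bottom row is $(d_1f, fy_1, y_3)$, so $A_{31} = d_1 f$, and since $y_1 \in \mathcal{Y}_{d_1}$ is coprime to $d_1$ we get $\gcd(A_{31}, A_{32}) = f$; the minor $M_{\{13\},\{12\}}$ equals $d_2 f$ by direct expansion using \eqref{eq:xycoordinates}. Integrality of the remaining entries reduces to verifying the congruences \eqref{eq:firstcongequation}--\eqref{eq:vcongequation}. Substituting the explicit formulas $u = d_1 x_3 \overline{x_2} + d_2 \overline{y_1} + d_1 d_2 k$ and $v = d_2 \overline{y_1} y_3 + d_1 \overline{x_2} + d_1 d_2 y_3 k$ into each congruence, the verification becomes routine using $x_3 y_3 \equiv 1 \pmod f$ and the fact that $\overline{x_2}, \overline{y_1}$ are inverses modulo $d_2, d_1$ respectively.

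For injectivity, suppose two tuples $(x_2, y_1, (x_3, y_3), k)$ and $(X_2, Y_1, (X_3, Y_3), K)$ map to the same $\Gamma_\infty$-double coset. By Proposition \ref{prop:congruenceEquations} (combined with the remark that $x_3, y_3$ are forced to agree on the nose once both tuples come from a fixed residue class in $\mathcal{F}_f$), we obtain $x_2 \equiv X_2 \pmod{d_2}$, $y_1 \equiv Y_1 \pmod{d_1}$, $x_3 = X_3$, $y_3 = Y_3$, and $u \equiv U \pmod{d_1 d_2 f}$. Since $\mathcal{X}_{d_2}$ and $\mathcal{Y}_{d_1}$ are taken to be complete sets of (coprime-to-$d_1 d_2 f$) residue representatives, the first four equalities force $x_2 = X_2$ and $y_1 = Y_1$. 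Subtracting the formulas for $u$ and $U$ then yields $d_1 d_2(k - K) \equiv 0 \pmod{d_1 d_2 f}$, i.e., $k \equiv K \pmod f$, hence $k = K$ as elements of $\mathcal{K}_f$.

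For surjectivity, start with an arbitrary $A \in \Omega(d_1, d_2, f)$. By Lemma \ref{lem:arithmeticProgressionChoice} we may replace $A$ by an equivalent matrix with $fD = A_{33} M_{\{12\},\{12\}} - 1 \neq 0$, so that Proposition \ref{prop:BottSamelsonLikeDecomposition} applies and supplies a factorization $A = \iota_\alpha(\gamma_2)\iota_\beta(\gamma_3)\iota_\alpha(\gamma_1)$. The discussion immediately following that proposition shows that by further multiplication by elements of $\Gamma_\infty$ on the left and right (via Lemma \ref{lem:arithmeticProgressionChoice}) we can independently shift the parameters $x_2$ mod $d_2$, $y_1$ mod $d_1$, and $(x_3, y_3)$ mod $f$ to land in $\mathcal{X}_{d_2}$, $\mathcal{Y}_{d_1}$, $\mathcal{F}_f$ respectively. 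Once $x_2, y_1, x_3, y_3$ are fixed, the Bruhat coordinate $u$ of $A$ is determined modulo $d_1 d_2 f$, and the congruence \eqref{eq:ucongequation} together with the parametrization \eqref{eq:uparametrization} produces a unique $k \in \mathcal{K}_f$; then $v$ is pinned down modulo $d_1 d_2 f$ by the formula following \eqref{eq:uparametrization}, and the double coset of $A$ is exactly the image of the tuple $(x_2, y_1, (x_3, y_3), k)$.

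The main obstacle is the coprimality arrangement in the surjectivity step: Proposition \ref{prop:congruenceEquations} simplifies nicely only when $x_2$ and $y_1$ are coprime to $d_1 d_2 f$, not merely to $d_2$ and $d_1$. This is handled by invoking Dirichlet's theorem on primes in the arithmetic progressions $M_{\{13\},\{12\}}/f + d_2 \Z$ and $A_{32}/f + d_1 \Z$ (as already observed in the discussion between Proposition \ref{prop:BottSamelsonLikeDecomposition} and Proposition \ref{prop:congruenceEquations}), so that suitable representatives $x_2, y_1$ coprime to $d_1 d_2 f$ always exist; once this is in hand, every remaining verification is a mechanical congruence check.
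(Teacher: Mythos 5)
Your proof is correct and follows essentially the same route as the paper: the corollary is precisely the synthesis of Proposition \ref{prop:BottSamelsonLikeDecomposition} (surjectivity after adjusting the double-coset representative), Proposition \ref{prop:congruenceEquations} (integrality and the double-coset equivalences), and the parametrization \eqref{eq:uparametrization} of the residual $k$-freedom, which is why the paper states it without a separate proof. One cosmetic slip: the minor that must equal $d_2 f$ for membership in $\Omega(d_1,d_2,f)$ is the lower-left one, written $M_{\{23\},\{12\}}$ in \eqref{eq:FineKloostermanSetGL3}, not $M_{\{13\},\{12\}}$, which equals $x_2 f$ by \eqref{eq:xycoordinates}.
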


\begin{remark}
	The condition that $f < x_3, y_3< 2f$ is not important. Any fixed set of reduced residue classes would work as long as $x_3 y_3 -1\neq 0$.
\end{remark}

\begin{corollary}
	The number of elements in the coarse Kloosterman set double coset $\Omega_{w_0}( c_1,c_2)$ is given by
	\[
		\left|\Gamma_\infty \backslash \Omega_{w_0}( c_1,c_2)/ \Gamma_\infty \right| = \sum_{f | (c_1, c_2)}  \phi\left( \frac{c_1}{f} \right) \phi\left(\frac{c_2}{f}\right) \phi(f) f.
	\]
\end{corollary}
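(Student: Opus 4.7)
The plan is to combine the two structural results already established in the paper. The decomposition \eqref{eq:KloostermanSetFineDecomposition} is a partition of $\Omega_{w_0}(c_1,c_2)$ that is stable under the $\Gamma_\infty \times \Gamma_\infty$-action on both sides (since $f, d_1, d_2$ are invariants of the double coset, being expressible through minors of the matrix), so it descends to a partition of the double-coset quotient. This reduces the problem to counting $|\Gamma_\infty \backslash \Omega(d_1,d_2,f)/\Gamma_\infty|$ for each divisor $f$ of $\gcd(c_1,c_2)$, with $d_i = c_i/f$, and then summing.

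For each such triple $(d_1,d_2,f)$, I would apply Corollary \ref{cor:FineKloostermanCellBijection}, which identifies $\Gamma_\infty \backslash \Omega(d_1,d_2,f)/\Gamma_\infty$ with the Cartesian product $\mathcal{X}_{d_2} \times \mathcal{Y}_{d_1} \times \mathcal{F}_f \times \mathcal{K}_f$. The cardinalities are then computed separately: $|\mathcal{K}_f| = f$ is immediate from the definition; $|\mathcal{X}_{d_2}| = \phi(d_2)$ and $|\mathcal{Y}_{d_1}| = \phi(d_1)$ because these sets consist of reduced residue classes modulo $d_2$ and $d_1$ respectively, the additional coprimality to $d_1 d_2 f$ being merely a constraint on the choice of integer representative within each class (such a representative always exists by shifting by multiples of the modulus); and $|\mathcal{F}_f| = \phi(f)$ because the condition $x_3 y_3 \equiv 1 \pmod{f}$ forces $x_3 \in (\Z/f\Z)^*$ and then determines $y_3$ uniquely in the fixed reduced residue system $\{f+1,\ldots,2f\}$.

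Multiplying the four factors and summing over $f \mid \gcd(c_1,c_2)$ yields the stated formula
\[
\sum_{f \mid \gcd(c_1,c_2)} \phi\!\left(\tfrac{c_1}{f}\right) \phi\!\left(\tfrac{c_2}{f}\right) \phi(f)\, f.
\]
The main obstacle is really nothing beyond careful bookkeeping: one must verify that the coprimality to $d_1 d_2 f$ imposed in the discussion preceding Corollary \ref{cor:FineKloostermanCellBijection} does not reduce the counts of $\mathcal{X}_{d_2}$ and $\mathcal{Y}_{d_1}$ below $\phi(d_2)$ and $\phi(d_1)$, and that $\mathcal{F}_f$ is indexed naturally by $(\Z/f\Z)^*$ rather than by all of $\Z/f\Z$. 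Both points follow from the argument already used in the paragraph that chose $x_2,y_1$ coprime to $d_1 d_2 f$ via Dirichlet's theorem on primes in arithmetic progressions, so no genuinely new input is required.
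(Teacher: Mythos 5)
Your proposal is correct and is essentially the argument the paper intends: the corollary is stated immediately after Corollary \ref{cor:FineKloostermanCellBijection} precisely because it follows by counting the four factors of the parametrizing set ($\phi(d_2)$, $\phi(d_1)$, $\phi(f)$, $f$) and summing over $f \mid \gcd(c_1,c_2)$ via the stratification \eqref{eq:KloostermanSetFineDecomposition}. Your bookkeeping remarks about the coprimality constraints and the size of $\mathcal{F}_f$ are accurate and match the discussion preceding that corollary.
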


\subsection{Evaluation of Fine Kloosterman Sums} \label{subsec:FineKloostermanEvaluation}

According to this parametrization we evaluate $\S_{w_0}(\vec{m}, \vec{n}; d_1,d_2,f)$. The $k$ sum will give us a restriction on the set of $(x_3, y_3)$ pairs as well as the condition that $(n_2d_1, f) = (m_2d_2,f)$.

\begin{theorem}\label{thm:KloostermanEvaluation}
Let $n_1, n_2, m_1,m_2 \in \Z$. The Kloosterman sum $\S_{w_0}(\vec{m}, \vec{n}; d_1,d_2,f)$ is zero unless $(m_2d_2,f) = (n_2 d_1, f)$. If this is satisfied, then the Kloosterman sum equals,
\[
	f \sum_{\substack{x_3,y_3 \pmod{f}\\ x_3 y_3 \equiv 1 \pmod{f}\\ m_2d_2 + n_2d_1 y_3 \equiv 0 \pmod{f} }}S(n_1, (m_2d_2 + n_2d_1y_3)/f; d_1)  S(m_1,(n_2d_1 + m_2d_2x_3)/f; d_2).
\]
\end{theorem}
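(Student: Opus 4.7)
The plan is to apply the parametrization of Corollary \ref{cor:FineKloostermanCellBijection} directly to the definition \eqref{eq:FineKloostermanSum} of $\S_{w_0}(\vec{m},\vec{n};d_1,d_2,f)$ and then unfold the character sums. From the form of $u_L$ and $u_R$ in \eqref{eq:xyuvCoordinates}, the character values are
\[
\psi_{\vec{m}}(u_L)\psi_{\vec{n}}(u_R) = e\!\left(\frac{m_1 x_2}{d_2} + \frac{m_2 u}{d_1 f} + \frac{n_1 y_1}{d_1} + \frac{n_2 v}{d_2 f}\right).
\]
Substituting the explicit expressions $u = d_1 x_3 \overline{x_2} + d_2 \overline{y_1} + d_1 d_2 k$ and $v = d_2 \overline{y_1} y_3 + d_1 \overline{x_2} + d_1 d_2 y_3 k$ from Corollary \ref{cor:FineKloostermanCellBijection}, the argument of the exponential breaks into three groups: terms depending only on $x_2$ and $x_3$, terms depending only on $y_1$ and $y_3$, and a clean linear term in $k$ of the form $(m_2 d_2 + n_2 d_1 y_3)k/f$.

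Next I would perform the sum over $k \in \mathcal{K}_f = \{0, 1, \ldots, f-1\}$ first. This yields the indicator factor $f \cdot [m_2 d_2 + n_2 d_1 y_3 \equiv 0 \pmod{f}]$, which in particular forces $(m_2 d_2, f) = (n_2 d_1, f)$ via $x_3 y_3 \equiv 1 \pmod{f}$, establishing the stated vanishing criterion. Granting this congruence, the integer $N := (m_2 d_2 + n_2 d_1 y_3)/f$ is well-defined, and an easy regrouping shows
\[
\frac{m_2 d_2 \overline{y_1}}{d_1 f} + \frac{n_2 y_3 \overline{y_1}}{f} = \frac{N \overline{y_1}}{d_1},
\]
while dually, using $n_2 d_1 + m_2 d_2 x_3 \equiv 0 \pmod{f}$ with $M := (n_2 d_1 + m_2 d_2 x_3)/f$,
\[
\frac{n_2 d_1 \overline{x_2}}{d_2 f} + \frac{m_2 x_3 \overline{x_2}}{f} = \frac{M \overline{x_2}}{d_2}.
\]
Hence the remaining summand factors as $e\!\left(\frac{m_1 x_2 + M \overline{x_2}}{d_2}\right) e\!\left(\frac{n_1 y_1 + N \overline{y_1}}{d_1}\right)$, and summing over $x_2 \in \mathcal{X}_{d_2}$ and $y_1 \in \mathcal{Y}_{d_1}$ independently (as they appear in different tensor factors) produces $S(m_1, M; d_2)\,S(n_1, N; d_1)$, which is exactly the claimed expression after restoring the outer sum over $(x_3, y_3)$.

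The one subtlety to address carefully is that $\mathcal{X}_{d_2}$ and $\mathcal{Y}_{d_1}$ are sets of reduced residue representatives modulo $d_2$ and $d_1$ chosen with the extra constraint of coprimality to $d_1 d_2 f$; to identify the resulting sums with the classical Kloosterman sums one must verify both that $\mathcal{X}_{d_2}$ surjects onto $(\Z/d_2\Z)^*$ (by CRT one can always lift a unit modulo $d_2$ to an integer coprime to $d_1 f$, since any prime dividing both $d_2$ and $d_1 f$ is already excluded by $\gcd(x_2,d_2)=1$) and that the exponential above genuinely depends on $x_2$ only modulo $d_2$ (which is transparent from the displayed expression). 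This is the main bookkeeping obstacle; everything else is direct substitution and orthogonality of the additive character modulo $f$ in the $k$-sum.
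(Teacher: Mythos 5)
Your proposal is correct and follows essentially the same route as the paper's proof: substitute the parametrization of Corollary \ref{cor:FineKloostermanCellBijection} into \eqref{eq:FineKloostermanSum}, evaluate the $k$-sum by orthogonality to obtain the factor $f$ and the congruence $m_2d_2 + n_2d_1y_3 \equiv 0 \pmod f$ (hence the vanishing criterion), and then recognize the remaining $x_2$- and $y_1$-sums as $S(m_1,M;d_2)$ and $S(n_1,N;d_1)$. Your closing remark about $\mathcal{X}_{d_2}$, $\mathcal{Y}_{d_1}$ being genuine reduced residue systems despite the extra coprimality to $d_1d_2f$ is a point the paper handles earlier (when setting up the parametrization) rather than inside the proof, but it is the same argument.
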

\begin{proof}
We calculate by using the definition of the fine Kloosterman sum, the coordinatization of the Kloosterman set from \ref{cor:FineKloostermanCellBijection}, and the explicit form of the superdiagonal elements in the unipotent factors of the Bruhat decomposition in terms of these coordinates as in \eqref{eq:xyuvCoordinates},
\begin{multline*}
	\S_{w_0}(\vec{m}, \vec{n}; d_1,d_2,f) = \sum_{\substack{\gamma \in \Gamma_\infty \backslash \Omega(d_1,d_2,f)/\Gamma_\infty  \\ \gamma \in u_L w_0t(d_1f, d_2f ) u_R} } \psi_{(m_1,m_2) } (u_L) \psi_{(n_1,n_2)}(u_R)\\
	= \sum_{\substack{x_2 \in \mathcal{X}_{d_2} \\ y_1 \in \mathcal{Y}_{d_1}}} \sum_{(x_3, y_3) \in \mathcal{F}_f} \sum_{k = 0}^{f-1} e\left(\frac{m_1x_2}{d_2} + \frac{m_2u}{d_1f} + \frac{n_1y_1}{d_1} + \frac{n_2v}{d_2f}\right).
\end{multline*}
Then we plug in the values for $u$ and $v$ in terms of the given coordinates,
\begin{multline*}
		\S_{w_0}(\vec{m}, \vec{n}; d_1,d_2,f) = \sum_{\substack{x_2 \in \mathcal{X}_{d_2} \\ y_1 \in \mathcal{Y}_{d_1}}} \sum_{(x_3, y_3) \in \mathcal{F}_f} e\left(\frac{m_1x_2}{d_2} +  \frac{n_2d_1\overline{x_2}}{d_2f}  + \frac{m_2 x_3 \overline{x_2}}{f} \right)\\
		\times e\left(\frac{n_1 y_1}{d_1} + \frac{m_2d_2\overline{y_1}}{d_1f} +  \frac{n_2 y_3 \overline{y_1}}{f} \right) \sum_{k= 0}^{f-1} e\left(\frac{m_2 d_2 + n_2d_1 y_3}{f}k \right).
\end{multline*}
The innermost sum over $k$ gives us the congruence condition
\begin{equation}\label{eq:y3condition}
	m_2d_2 + n_2d_1 y_3 \equiv 0 \pmod{f},
\end{equation}
for otherwise the sum vanishes. This is only satisfiable for some $y_3 \in (\Z/f\Z)^*$ if $(m_2d_2, f) = (n_2d_1,f)$. Thus, 
\begin{multline*}
		\S_{w_0}(\vec{m}, \vec{n}; d_1,d_2,f) = f \sum_{\substack{(x_3, y_3) \in \mathcal{F}_f\\ m_2d_2 + n_2d_1y_3 \equiv 0\pmod{f}}} 
		 \sum_{y_1 \in \mathcal{Y}_{d_1}} e\left(\frac{n_1 y_1}{d_1} + \frac{(m_2d_2+ n_2d_1y_3) \overline{y_1}}{d_1f}  \right) \\
		 \times \sum_{x_2 \in \mathcal{X}_{d_2}} e\left(\frac{m_1 x_2}{d_2} + \frac{(m_2d_2x_3 + n_2d_1)\overline{x_2}}{d_2f}\right).
\end{multline*}

Let $y_3$ be chosen so that \eqref{eq:y3condition} is satisfied. Define the integers $N = N(y_3) := (m_2d_2 + n_2d_1 y_3)/f$ and $M  = M(x_3) := (m_2d_2x_3 + n_2 d_1)/f$. These are both integers, due to the condition on $(x_3, y_3)$. Note that if $x_3 \equiv x_3' \pmod{f}$ then $M(x_3) \equiv M(x_3') \pmod{d_1}$ and similarly for $N(y_3)$. The fine Kloosterman sum is
 \[
  \S_{w_0} (\vec{m}, \vec{n}; d_1,d_2,f) = f \sum_{\substack{x_3, y_3 \pmod{f}\\x_3y_3 \equiv 1 \pmod{f}\\ m_2d_2 + n_2d_1y_3 \equiv 0 \pmod{f}}}S(n_1,N(y_3);d_1) S(m_1,M(x_3);d_2).
 \]
\end{proof}

We highlight a special case for the Kloosterman sum. Assume that $(f,d_1d_2) = 1$. For example this is the only kind of fine Kloosterman sum that appears in the stratification of the coarse Kloosterman sum $S_{w_0}(\vec{m}, \vec{n}; (c_1,c_2))$ if $c_1c_2$ is cube-free.

\begin{proposition}\label{prop:coprimeSimplification}
	Assume $d_1, d_2$ and $f$ are such that $f$ is relatively prime to $d_1d_2$. Let $n_1, n_2, m_1,m_2 \in \Z$ such that $(n_2, f) = (m_2,f) = e$, and $f = eh$. Let $h^*$ be the $h$-primary part of $f$, so that $h^*|f$ contains the same prime factors as $h$ and $f/h^*$ is relatively prime to $h$. Let us choose $\overline{f}$ so that $\overline{f} f \equiv 1 \pmod{d_1d_2}$. Then,
\[
	\S_{w_0}(\vec{m}, \vec{n};d_1,d_2,f)= f \phi\left(\frac{f}{h^*}\right) \frac{h^*}{h} S(n_1, m_2d_2\overline{f}; d_1) S(m_1,n_2d_1\overline{f}; d_2) .
\]

Using this we can have, using the Weil Bound,
\[
	|\S_{w_0}(\vec{m},\vec{n};d_1,d_2,f)| \leq f e  (m_1,n_2d_1,d_2)^{\frac12}d_2^{\frac{1}{2}} (n_1,m_2d_2,d_1)^{\frac12} d_1^{\frac12} \tau(d_1)\tau(d_2),
\]
where $\tau(d)$ is the divisor function.
\end{proposition}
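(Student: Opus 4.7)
The plan is to start from Theorem \ref{thm:KloostermanEvaluation} and exploit the coprimality $\gcd(f,d_1d_2)=1$ to factor the $(x_3,y_3)$-sum as a counting problem times a product of two honest classical Kloosterman sums. First I will observe that since $f\,N(y_3) = m_2d_2 + n_2d_1y_3$, reducing modulo $d_1$ and inverting $f$ (which is legitimate because $\gcd(f,d_1)=1$) yields $N(y_3)\equiv m_2d_2\overline f\pmod{d_1}$, and analogously $M(x_3)\equiv n_2d_1\overline f\pmod{d_2}$. Hence the factors $S(n_1,N(y_3);d_1)$ and $S(m_1,M(x_3);d_2)$ appearing inside the sum from Theorem \ref{thm:KloostermanEvaluation} are independent of the summation variables, giving
\begin{equation*}
\S_{w_0}(\vec m,\vec n;d_1,d_2,f) = f\cdot\mathcal N\cdot S(n_1,m_2d_2\overline f;d_1)\,S(m_1,n_2d_1\overline f;d_2),
\end{equation*}
where $\mathcal N$ counts pairs $(x_3,y_3)\pmod f$ with $x_3y_3\equiv 1$ and $m_2d_2+n_2d_1y_3\equiv 0\pmod f$.

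Next I will compute $\mathcal N$. Since $x_3\equiv\overline{y_3}\pmod f$, it suffices to count units $y_3\in(\Z/f\Z)^*$ solving the linear congruence. Because $\gcd(d_1,f)=1$ we have $\gcd(n_2d_1,f)=e$, and dividing through reduces the congruence to $y_3\equiv y_0\pmod h$, where $y_0\equiv -(m_2/e)d_2\,\overline{(n_2/e)d_1}\pmod h$. The key input is that the hypothesis $\gcd(m_2,f)=\gcd(n_2,f)=e$ forces $v_p(m_2/e)=v_p(n_2/e)=0$ for every prime $p\mid h$ (since $v_p(f)>v_p(e)$), so every factor defining $y_0$ is a unit modulo $h$; consequently $y_0$ itself is a unit modulo $h$.

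I will then invoke the Chinese remainder splitting $\Z/f\Z\cong \Z/h^*\Z\times\Z/(f/h^*)\Z$. The constraint $y_3\equiv y_0\pmod h$ only involves primes dividing $h$ and hence only constrains the $h^*$-component, cutting out a coset of size $h^*/h$; since $y_0$ is already a unit modulo each prime of $h$, every element of this coset is automatically a unit in $\Z/h^*\Z$. The complementary $\Z/(f/h^*)\Z$ component is unconstrained, so requiring $y_3$ to be coprime to $f/h^*$ contributes exactly $\phi(f/h^*)$ choices. Multiplying yields $\mathcal N=(h^*/h)\phi(f/h^*)$, which gives the stated evaluation.

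For the Weil bound, I will apply $|S(a,b;q)|\leq \gcd(a,b,q)^{1/2}q^{1/2}\tau(q)$ to each classical factor, using that $\overline f$ is a unit modulo $d_i$ to rewrite $\gcd(n_1,m_2d_2\overline f,d_1)=\gcd(n_1,m_2d_2,d_1)$ and similarly for the other, together with the trivial estimate $\phi(f/h^*)(h^*/h)\leq (f/h^*)(h^*/h)=e$. The main obstacle is the prime-by-prime bookkeeping in $\mathcal N$, in particular verifying that the exact equality $\gcd(m_2,f)=\gcd(n_2,f)=e$ (as opposed to the weaker $e\mid m_2,n_2$) is precisely what guarantees that $y_0$ is a unit modulo $h$ so that no admissible element is discarded when imposing $\gcd(y_3,h^*)=1$.
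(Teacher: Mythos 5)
Your proposal is correct and follows essentially the same route as the paper's proof: reduce $N(y_3)$ and $M(x_3)$ modulo $d_1$ and $d_2$ using the invertibility of $f$, pull the two classical Kloosterman sums out of the $(x_3,y_3)$-sum, and count the admissible pairs as one residue class modulo $h$, lifted to $h^*/h$ classes modulo $h^*$, times $\phi(f/h^*)$ for the complementary component. Your valuation argument showing that $y_0$ is automatically a unit modulo $h$ (so that no solutions are lost to the coprimality requirement) is a detail the paper leaves implicit, and is a worthwhile addition.
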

\begin{proof}
	This follows from the previous proposition. Since $d_1$ and $d_2$ are relatively prime to $d_1$ and $d_2$ and since for the classical Kloosterman sum $S(n,m;d)$ the values of $n$ and $m$ are only important modulo $d$, we simply calculate,
	\[
		\frac{(m_2d_2 + n_2d_1y_3)}{f} \equiv \frac{(m_2d_2 + n_2d_1y_3)f \overline{f}}{f} \equiv m_2d_2\overline{f} + n_2d_1 y_3 \overline{f} \equiv  m_2 d_2 \overline{f} \pmod{d_1},
	\]  
	and similarly $(m_2d_2x_3 + n_2d_1)/f \equiv n_2d_1 \overline{f} \pmod{d_2}$.

	 Thus
\[
\S_{w_0} (\vec{m}, \vec{n}; d_1,d_2,f) = f \sum_{\substack{x_3, y_3 \pmod{f}\\x_3y_3 \equiv 1 \pmod{f}\\ m_2d_2 + n_2d_1y_3 \equiv 0 \pmod{f}}} S(n_1, m_2d_2 \overline{f}; d_1) S(m_1, n_2d_1 \overline{f}; d_2).
\]
	The summands have no $(x_3,y_3)$ dependence. However depending on the common factors of $n_2$ and $m_2$ with $f$ we have different number of solutions to $m_2d_2 + n_2d_1y_3 \equiv 0 \pmod{f}$. There is one solution modulo $f/e = h$, obtained by cancelling the common factors. Then each of these solutions lift to $h^*/h$ many solutions modulo $h^*$. Preserving the fact that the solutions need to be relatively prime to $f$, each of these solutions lift to $\phi\left(\frac{f}{h^*}\right)$ many solutions.
\end{proof}

As an exercise we can explicitly calculate \emph{coarse} Kloosterman sums. We calculate, for an odd prime $p$, 
\[
	S_{w_0}((1,p), (1,p); (p^2,p)) = \S_{w_0} ((1,p), (1,p) ; p^2, p,1) + \S_{w_0}((1,p), (1,p); p,1,p).
\]
The first term with $f = 1$ is easy to calculate, we can take $x_3 = y_3 = 0$ in \eqref{eq:FineKloostermanEvaluation} and get,
\[
	\S_{w_0}((1,p), (1,p); p^2,p,1) = S(1,p^2; p)S(1,p^3;p) = \mu(p^2) \mu(p) = 0.
\]
The second fine Kloosterman sum can be evaluated as
\[
	\S_{w_0}((1,p), (1,p); p,1,p) = p \hspace{-0.7cm} \sum_{\substack{x_3 y_3 \equiv 1 \pmod{p}\\ p + y_3 p^2 \equiv 0 \pmod{p}} }  \hspace{-0.7cm} S(1, \tfrac{p + p^2 y_3}{p}; p)S(1,\tfrac{p x_3 + p^2 }{p};1) = p (p-1) S(1,1;p), 
\]
since $(p-1)$ many $(x_3, y_3)$ pairs all yield the same answer. Thus we get  
\begin{equation} \label{eq:KloostermanSumEvaluationExample} 
\S_{w_0}((1,p), (1,p); (p^2,p)) = p(p-1) S(1,1;p).
\end{equation}

Another example would be $S_{w_0}((1,1),(p,p),(p,p)) = 2-p$. 

Finally let's take integers $m_1,m_2,n_1,n_2$ all coprime to $p$.  
\[
	S_{w_0}(\vec{m}, \vec{n};(p,p)) = \S_{w_0}(\vec{m},\vec{n};p,p,1) + \S_{w_0}(\vec{m}, \vec{n}; 1,1,p).
\]
The $f = 1$ case is simply $S(n_1,m_2p;p) S(m_1,n_2p;p) = c_{p}(n_1) c_{p}(m_1) = \mu(p)^2 = 1$ and the $f = p$ case is $p S(n_1,(m_2 + n_2 y_3)/p;1)S(m_1, (m_2x_3 + n_2)/p;1)$ for the unique $(x_3,y_3)$ pair modulo $p$, that makes the second arguments integers. Thus we get $p$. 
Together we get the identity \cite[(1.3)]{BlomerButtcaneGlobalDecomposition}, i.e. that $S(\vec{m}, \vec{n};(p,p)) = p+1$.

\subsection{The Braid Relation}
\label{subsec:BraidRelation}

The stratification $\Omega(d_1,d_2,f)$ was based on the long word decomposition, $w_0 = s_\alpha s_\beta s_\alpha$. 

Let now use the reduced word decomposition $w_0 = s_\beta s_\alpha s_\beta$. Within this subsection we call $\Omega(d_1,d_2,f) = \Omega_{(\alpha, \beta, \alpha)}(d_1,d_2,f)$ so that we can distinguish from $\Omega_{(\beta, \alpha, \beta)}(d_1,d_2,f)$. 

\begin{proposition} \label{prop:braidRelation}
	Given $A \in \SL_3(\Z) \cap Bw_0B$ as above, we have $$\gcd(A_{31}, A_{21} )= \gcd\left(\left|\begin{matrix} A_{21}& A_{22} \\ A_{31} & A_{32} \end{matrix} \right|, \left|\begin{matrix} A_{21}& A_{23} \\ A_{31} & A_{33} \end{matrix}\right|\right).$$
	Call this common $\gcd$ as $f$. Define
	\[
		\Omega_{(\beta,\alpha,\beta)}(d_1, d_2, f):= \left\{A \in \SL_3(\Z)| \gcd(A_{31}, A_{21}) = f, A_{31} = d_1f, M_{13} = d_2 f\right\}.
	\]
	These sets are right and left $\Gamma_\infty$-invariant. For every $A \in \Omega_{(\beta,\alpha,\beta)}$ by changing to another $A' \sim A$ if necessary, we can make sure that $D:= A_{11} M_{11} -1 \neq 0$. For such a matrix we can write it in the form $A = \iota_\beta(\gamma_1) \iota_\alpha(\gamma_3) \iota_\beta(\gamma_2)$ as follows
	\begin{multline*}
		A = e_\beta\left(\frac{A_{21}/f}{d_1}\right) s_\beta h_\beta(d_1) e_\beta\left(\frac{M_{21}/D}{d_1}\right)  \times e_\alpha \left(\frac{A_{11}}{f}\right) s_\alpha h_\alpha(f) e_\alpha\left(\frac{M_{11}}{f}\right) \\
		\times e_\beta\left( \frac{A_{12}/D}{d_2} \right) s_\beta h_\beta(d_2) e_\beta\left(\frac{M_{12}/f}{d_2}\right).
	\end{multline*}
	Using the coordinate names of \eqref{eq:xymatrices}, and calling $u = x_2 d_1 + x_3 d_2 y_1$ and $v = x_2 d_1 y_3 + d_2 y_1$, we can parametrize all the double cosets in $\Omega_{(\beta, \alpha, \beta)}(d_1, d_2,f)$:
	\[
		A = \bpm 1 & \frac{u}{d_2f}  & \frac{x_3}{d_1f} \\ & 1 & \frac{x_1}{d_1} \\ &&1  \epm w_0 \bpm d_1f && \\ &\frac{d_2}{d_1}& \\ &&\frac{1}{d_2f} \epm \bpm 1 &  \frac{v}{d_1f} & \frac{y_2v - y_3d_1}{d_1d_2f} \\ & 1& \frac{y_2}{d_2} \\ &&1 \epm .
	\]
	These coordinates will give rise to an $A\in Bw_0B \cap \SL_3(\Z)$ if and only if $x_3y_3 \equiv 1 \pmod{f}$, $x_1 v \equiv d_2 \pmod{d_1}$, $y_2v \equiv y_3 d_1 \pmod{d_2}$, $x_3 v \equiv u \pmod{d_1f}$, $x_1y_2 v \equiv y_3 x_1 d_1 + y_2 d_2 \pmod{d_1d_2}$, and $ \quad x_2 y_2 v + d_1(1 - x_3 y_3)  \equiv u y_2 \pmod{d_1d_2f} $. 
		
	Another matrix $B$ with the coordinates $X_1, Y_2, x_3, y_3, U, V$ is in the same $\Gamma_\infty$--double-coset if and only if
	\[
		u \equiv U \pmod{d_1d_2f},  \quad x_1 \equiv X_1 \pmod{d_1} \quad \text{ and } y_2 \equiv Y_2 \pmod{d_2}. 
	\]
	
	Using these coordinates the Kloosterman sum $\S_{(\beta,\alpha,\beta)}(\vec{m}, \vec{n};d_1,d_2,f)$ defined by restricting the summation in \eqref{eq:CoarseKloostermanDefinition} to the set of double cosets, $\Omega_{(\beta,\alpha,\beta)}(d_1,d_2,f)$ is given by
	\begin{multline*}
		\S_{(\beta,\alpha,\beta)}(\vec{m}, \vec{n};d_1,d_2,f) = \sum_{\substack{\gamma \in \Gamma_\infty \backslash \Omega_{(\beta, \alpha,\beta)} (d_1,d_2,f)/ \Gamma_\infty \\ \gamma = u_L w_0 t(d_1f, d_2f) u_R  }} \psi_{\vec{m}} (u_L) \psi_{\vec{n}} (u_R)\\
		= f \sum_{\substack{x_3, y_3\pmod f\\ x_3y_3 \equiv 1 \pmod{f}\\ n_1d_2 + x_3m_1d_1 \equiv 0\pmod{f}}} S((n_1d_2 + x_3 m_1 d_1)/f, m_2; d_1) S((n_1 d_2 y_3 + m_1d_1)/f, n_2; d_2).
	\end{multline*}
\end{proposition}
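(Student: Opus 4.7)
The plan is to mirror, step by step, the four-stage development that produced the analogous statements for the reduced word $w_0 = s_\alpha s_\beta s_\alpha$, namely Lemma \ref{lem:gl3commongcd}, Proposition \ref{prop:BottSamelsonLikeDecomposition}, Proposition \ref{prop:congruenceEquations} together with Corollary \ref{cor:FineKloostermanCellBijection}, and Theorem \ref{thm:KloostermanEvaluation}. Each step uses the same ingredients as before; only the roles played by the bottom row $(A_{31},A_{32})$ and the left column $(A_{31},A_{21})^T$ of $A$ are interchanged.

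For the gcd identity, I would expand each of $A_{31}$ and $A_{21}$ via the adjugate formula $A^{-1}=\mathrm{adj}(A)$ (using $\det A=1$) as a $2\times 2$ determinant of cofactors in which the two columns of minors share a common column, yielding that $\gcd(A_{31},A_{21})$ divides the right-hand gcd; the reverse divisibility follows from the observation that both $2\times 2$ minors on the right contain the column $(A_{21},A_{31})^T$. For the explicit factorization, I would substitute the Ansatz $A=\iota_\beta(\gamma_1)\iota_\alpha(\gamma_3)\iota_\beta(\gamma_2)$ with $\gamma_i$ as in \eqref{eq:xymatrices} and compute the action on the vectors $\vec{e}_j$ and $\vec{e}_{i,j}$ just as in the proof of Proposition \ref{prop:BottSamelsonLikeDecomposition}. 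Reading off minors identifies $x_1=A_{21}/f$, $y_2=M_{12}/f$, $x_3=A_{11}$, $y_3=M_{11}$, and forces $D=A_{11}M_{11}-1$ to be the common denominator of the remaining coordinates; the analogue of Lemma \ref{lem:x3y3}, namely $A_{11}M_{11}\equiv 1\pmod f$, follows by expanding $\det(A)=1$ along the first column, and a column-operation variant of Lemma \ref{lem:arithmeticProgressionChoice} applied to the $(1,1)$-entry and $M_{11}$ allows one to arrange $D\neq 0$.

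For the six congruences and the description of $\Gamma_\infty$-double cosets, I would multiply out the claimed product of four matrices and simply demand integrality of every entry; each entry yields one of the listed congruence relations. The equivalence criterion for the double coset is then obtained, as in Proposition \ref{prop:congruenceEquations}, by imposing $u_L(A)u_L(B)^{-1}\in\Gamma_\infty$ and $u_R(B)^{-1}u_R(A)\in\Gamma_\infty$ and reading off the conditions on $u$, $x_1$, $y_2$, with the representatives of $x_3,y_3\pmod f$ forced equal by a further application of Lemma \ref{lem:arithmeticProgressionChoice}. For the Kloosterman-sum evaluation I would substitute the parametrization into \eqref{eq:CoarseKloostermanDefinition}: the superdiagonal entries of $u_L$ read off as $u/(d_2 f)$ and $x_1/d_1$, those of $u_R$ as $v/(d_1 f)$ and $y_2/d_2$, so the character $\psi_{\vec m}(u_L)\psi_{\vec n}(u_R)$ separates into factors indexed by $x_1\pmod{d_1}$ and $y_2\pmod{d_2}$ together with the $k$-sum coming from the freedom in $u$ modulo $d_1 d_2 f$. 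The $k$-sum collapses to the congruence $n_1 d_2 + x_3 m_1 d_1 \equiv 0 \pmod f$, and the remaining two sums factor as the displayed product of classical $\SL_2$ Kloosterman sums.

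The main obstacle is a bookkeeping swap: with the inner reflection now being $s_\alpha$ rather than $s_\beta$, the congruence modulo $f$ involves $m_1, n_1$ (the $\alpha$-character components) rather than $m_2, n_2$, and the minors used to identify the coordinates are the anti-diagonal transposes of those in Proposition \ref{prop:BottSamelsonLikeDecomposition}. Tracking this swap carefully through the exterior-algebra computation — in particular, verifying that the superdiagonal entries of $u_L$ and $u_R$ acquire the claimed form — is the principal technical burden. A more conceptual shortcut is the involution $\tau(A) := J A^T J$, with $J$ the $3\times 3$ anti-diagonal matrix, which fixes $\Gamma_\infty$, preserves $Bw_0 B$, and intertwines the simple roots $\alpha\leftrightarrow\beta$; this would send $\Omega_{(\alpha,\beta,\alpha)}(d_1,d_2,f)$ to $\Omega_{(\beta,\alpha,\beta)}(d_2,d_1,f)$ and $\psi_{(m_1,m_2)}\circ\tau = \psi_{(m_2,m_1)}$, so the proposition would follow from Theorem \ref{thm:KloostermanEvaluation} by a single change of variables, provided one checks that $\tau$ maps the chosen transversal of $\U^{w_0}$ to a valid transversal on the other side.
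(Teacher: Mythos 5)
Your main line of argument — redoing Lemma \ref{lem:gl3commongcd}, Proposition \ref{prop:BottSamelsonLikeDecomposition}, Proposition \ref{prop:congruenceEquations}/Corollary \ref{cor:FineKloostermanCellBijection} and Theorem \ref{thm:KloostermanEvaluation} with the roles of the bottom row and left column interchanged, using $A_{11}M_{11}\equiv 1\pmod f$ from the first-column Laplace expansion and the gcd identity to control the arithmetic progressions — is exactly the paper's intended proof, which it explicitly leaves to the reader as ``identical, mutatis mutandis.'' One small caution on your optional shortcut: the anti-transpose $JA^{T}J$ fixes $A_{31}$ and swaps $A_{21}\leftrightarrow A_{32}$, so it carries $\Omega_{(\alpha,\beta,\alpha)}(d_1,d_2,f)$ to $\Omega_{(\beta,\alpha,\beta)}(d_1,-d_2,f)$ (and, being an anti-homomorphism, swaps $u_L$ with $u_R$); the involution that realizes the $(d_2,d_1,f)$ swap is the paper's $A^{\dagger}=w_0\,{}^{t}A^{-1}w_0^{-1}$ discussed in Section \ref{subsec:BraidRelation}.
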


We leave the proof of this proposition to the reader as an exercise, as it is identical to the proofs in the case of the reduced word decomposition $w_0 = s_\alpha s_\beta s_\alpha$, \emph{mutatis mutandis}. 

Note that, given $A = \iota_\alpha(\gamma_2) \iota_\beta(\gamma_3) \iota_\alpha(\gamma_1)$ the involution
\[
	 A^\dagger :=  w_0 (\prescript{t}{}{A}\inv) w_0\inv
\]
is a homomorphism fixing $\U(\Z)$, and therefore it preserves $\Gamma_\infty$--double cosets. This involution does not preserve our finer decomposition, nor the set $\Omega_{w_0}(c_1,c_2)$ in general. However it sends the stratification based on one reduced word decomposition to the other. Indeed 
\[
	A^\dagger = \iota_\beta (\gamma_2) \iota_\alpha(\gamma_3) \iota_\beta(\gamma_1),
\]
and therefore we have the isomorphism
\begin{align*}
	\Omega_{(\alpha, \beta, \alpha)}(d_1,d_2,f) &\longrightarrow \Omega_{(\beta, \alpha, \beta)} (d_2,d_1,f)\\
	A &\longmapsto A^\dagger	.
\end{align*}
Notice the $d_1 \leftrightarrow d_2$ switch. This swapping can also be observed by noticing that the entries of $A^\dagger$ are given by $A^\dagger = \bsm M_{33} & M_{32} & M_{31}\\ M_{23} & M_{22} & M_{21} \\ M_{13} & M_{12} & M_{11} \esm$.

At this point we are inclined to think about the intersection of the strata  coming from the two reduced word decompositions, to obtain an even finer decomposition. That would correspond to controlling for the greatest common divisor of $A_{31}$ with both $A_{32}$ and $A_{21}$. Let $d_1,d_2,f_1,f_2,e$ be nonzero integers with $\gcd(f_1,f_2) = 1$. We define $\Omega_{w_0}^! = \Omega_{w_0}^! (d_1,d_2,f_1,f_2,e)$ as,
\[
	\Omega_{w_0}^{!} := \left\{ A \in BwB \cap \SL_3(\Z): \begin{matrix} \gcd(A_{31}, A_{32}) = f_1e, \,\, \gcd(A_{31}, A_{21}) = f_2e,\\ A_{31} = d_1f_1f_2e, \,\,\left| \begin{matrix} A_{21} &A_{22} \\ A_{31} & A_{32} \end{matrix} \right| = d_2 f_1 f_2 e \end{matrix} \right\}.
\] 
Notice that this is exactly the intersection of the two stratifications one obtains from the two reduced word decompositions. To be precise, if $\gcd(f_1,f_2) = 1$,
\[
	\Omega_{(\alpha,\beta,\alpha)} (d_1f_2, d_2f_2, f_1e) \cap \Omega_{(\beta, \alpha, \beta)} (d_1f_1, d_2f_1, f_2e) = \Omega_{w_0}^! (d_1,d_2, f_1,f_2,e).
\]
If the two strata corresponding to the two reduced word decompositions were in any other form, then their intersection would be empty.

One can also think about the Kloosterman sums restricted to these sets. We may define,
\[
	\S_{w_0}^!(\vec{m}, \vec{n}; d_1,d_2,f_1,f_2,e) := \sum_{\substack{\gamma \in \Gamma_\infty \backslash \Omega_{w_0}^!(d_1,d_2,f_1,f_2,e)/ \Gamma_\infty\\ \gamma = u_L w_0 t(d_1f_1f_2 e, d_2 f_1 f_2 e) u_R}} \psi_{\vec{m}}(u_L) \psi_{\vec{n}}(u_R).
\]
There is a good motivation to give a beautiful and comprehensible expression for these Kloosterman sums as we have done in Theorem \ref{thm:KloostermanEvaluation} for $\S_{w_0}(\vec{m}, \vec{n}; d_1,d_2,f)$. This is because the geometric side of the Kuznetsov-Bruggeman trace formula for the congruence subgroup
\[
	\Gamma_{00}(N) :=\left \{A \in \SL_3(\Z) : A \equiv \bsm * &* &* \\ 0&*&*\\ 0&0&* \esm \pmod{N} \right\}
\]
would be given by $\S_{w_0}^!(\vec{m}, \vec{n}, d_1,d_2,f_1,f_2,e)$ with only the condition $N|e$. 

If we are to follow the maxim that a good algebraic structure will lead to a beautiful comprehensible formula, a canonical set of double cosets that are induced from all reduced word decompositions is promising. However we were not able to obtain an aesthetically pleasing formula for this yet finer decomposition. This could be our own shortcoming, on the other hand not having such a formula perhaps understandable, a finer subdivision of $\Gamma_\infty$--double-cosets of $\Omega_{w_0}(c_1, c_2)$ should not automatically mean that the sum of $\psi_{\vec{m}}(u_L) \psi_{\vec{n}}(u_R)$ over these double cosets will comprise a comprehensible unit. Indeed, if we were to subdivide the set into singletons we would have a single exponential term, and have no hope for making use of cancellation.

\section{Ramanujan Sums and Triple Divisor Functions} \label{sec:RamanujanFormula}

One way to prove Ramanujan's formula \eqref{eq:ramanujanFormula} for the divisor sum is to calculate the $0$\th\ Fourier coefficient of an Eisenstein series $E(z,s) = \sum_{m,n\in \Z, (m,n)=1} |m + nz|^{-s}$, in two different ways.

This formula is an essential step in many of the $L$-function moment calculations such as \cite{YoungFourthMoment, Motohashi93}, to name a few. It allows us to display the \emph{guts} of the divisor function that appear in the moment calculations. After that we can, armed with transforms as scalpels, be very precise with our calculations and obtain sharp results.

Bump in \cite{BumpGL3} has given the extension of this formula to $\SL_3$. Using our formula for the Kloosterman sum, we can make the generalized Ramanujan sums more explicit. We first generalize the divisor sum, taken from Bump, ibid.
Let $\sigma_{\nu_1, \nu_2} (n_1,n_2)$ be defined multiplicatively, i.e. if $\gcd(n_1n_2, n_1'n_2') = 1$ then let
\begin{equation}\label{eq:sigmaMultiplicative}
	\sigma_{\nu_1, \nu_2} (n_1n_1',n_2n_2') = \sigma_{\nu_1, \nu_2} (n_1,n_2) \sigma_{\nu_1, \nu_2} (n_1',n_2').
\end{equation}
This means that it is enough to define this function for $n_1 = p^{k_1}, n_2 = p^{k_2}$. Put $\alpha := p^{\nu_1}$ and $\beta := p^{\nu_2}$. Then we define
\begin{equation}\label{eq:sigmaSchur}
	\sigma_{\nu_1, \nu_2} (p^{k_1}, p^{k_2}) = \beta^{-k_1} S_{k_1,k_2} (1, \beta, \alpha\beta),
\end{equation}
where 
\[
	S_{k_1,k_2}(\alpha, \beta, \gamma) = \left. \left| 
	\begin{matrix} \alpha^{k_1 + k_2 + 2} & \beta^{k_1 + k_2 + 2} & \gamma^{k_1 + k_2 + 2} \\ \alpha^{k_1 + 1} & \beta^{k_1 + 1} & \gamma^{k_1 + 1} \\ 1 & 1& 1 \end{matrix} \right| \middle/
	\left| \begin{matrix} \alpha^2 & \beta^2 & \gamma^2 \\ \alpha & \beta & \gamma \\ 1&1&1 \end{matrix} \right|\right.
\]
is the Schur polynomial. The Schur polnomials of Satake parameters give exactly the Fourier coefficients of $\GL_3$ automorphic forms at the powers of primes.

Let us establish the relationship with the triple divisor function.

\begin{lemma}
The Schur polynomial has the form,
\[
	\beta^{-k_1} S_{k_1,k_2} (1,\beta,\alpha \beta) = \sum_{i = 0}^{k_1} \sum_{j  = 0}^{k_2} \beta^{i + j} \alpha^{i} \sum_{\ell = 0}^{k_1 - i + j}  \alpha^{\ell},
\]
so that 
\[
	\sigma_{s_1, s_2}(n_1,n_2) = \sum_{e_1 | n_1} \sum_{e_2 | n_2} \sum_{e_3| \frac{n_1e_2}{e_1}} e_1^{s_1 + s_2} e_2^{s_2} e_3^{s_1}.
\]
\end{lemma}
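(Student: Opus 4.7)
\emph{Reduction to prime powers.} Both sides of the $\sigma_{s_1,s_2}(n_1,n_2)$ identity are multiplicative in $(n_1,n_2)$: the left-hand side by \eqref{eq:sigmaMultiplicative}, and the right-hand side because for coprime factorizations $n_1 = n_1'n_1''$, $n_2 = n_2'n_2''$ each $e_i$ factors uniquely, the divisibility $e_3 \mid n_1e_2/e_1$ splits across the coprime parts, and the weight $e_1^{s_1+s_2}e_2^{s_2}e_3^{s_1}$ factors. It therefore suffices to treat $n_1 = p^{k_1}$, $n_2 = p^{k_2}$. Substituting $\alpha = p^{s_1}$, $\beta = p^{s_2}$ and $e_1 = p^i$, $e_2 = p^j$, $e_3 = p^\ell$, the conditions $e_1\mid n_1$, $e_2\mid n_2$, $e_3\mid n_1 e_2/e_1$ translate exactly into $0\le i\le k_1$, $0\le j\le k_2$, $0\le\ell\le k_1 - i + j$, and the summand becomes $\alpha^i\beta^i\cdot\beta^j\cdot\alpha^\ell = \beta^{i+j}\alpha^{i+\ell}$. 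Thus the $\sigma$ formula reduces to the Schur identity.

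For the Schur identity, I plan to evaluate both sides in closed form as rational functions of $\alpha,\beta$ and verify that they coincide. On the right, summing the inner geometric series $\sum_{\ell=0}^{k_1-i+j}\alpha^\ell = (1-\alpha^{k_1-i+j+1})/(1-\alpha)$, then the sums over $j$ (with ratios $\beta$ and $\alpha\beta$) and over $i$, yields a single fraction with denominator $(1-\alpha)(1-\beta)(1-\alpha\beta)$ and numerator
\[
(1-\beta^{k_2+1})(1-(\alpha\beta)^{k_1+1}) \;-\; \alpha^{k_1+1}(1-\beta^{k_1+1})(1-(\alpha\beta)^{k_2+1}).
\]
On the left, the bialternant (Weyl character) formula writes $S_{k_1,k_2}(1,\beta,\alpha\beta) = \det(M)/\det(V)$; the Vandermonde denominator evaluates to $\det(V) = \beta(1-\alpha)(1-\beta)(1-\alpha\beta)$ up to sign, and the numerator $\det(M)$, expanded along its row of $1$'s, produces six monomial terms. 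Multiplying by $\beta^{-k_1}$ and noting that the two contributions involving $(\alpha\beta)^{k_1+1}$ cancel, the remaining polynomial matches the numerator displayed above.

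The main obstacle is purely bookkeeping: signs, indices, and cancellations in the two parallel expansions must be tracked carefully. A tempting alternative would be a coordinate-level bijection between the semistandard Young tableaux of shape $(k_1+k_2,k_1,0)$ that enumerate $S_{k_1,k_2}$ combinatorially and the index triples $(i,j,\ell)$ on the right, but such a bijection does not seem to exist --- individual monomials $\alpha^m\beta^n$ often appear with multiplicity greater than one on each side without a canonical pairing of their supports. The aggregated multiplicities do agree, but verifying this directly requires as much case analysis as the algebraic route above, so I prefer the direct computation.
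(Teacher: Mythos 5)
Your proof is correct and follows the same route the paper intends: the paper merely asserts that the lemma ``follows from a straightforward calculation of the Schur polynomial and \eqref{eq:sigmaMultiplicative}, \eqref{eq:sigmaSchur},'' and your reduction to prime powers via multiplicativity followed by the closed-form evaluation of both sides over the common denominator $(1-\alpha)(1-\beta)(1-\alpha\beta)$ is exactly that calculation, carried out correctly (the two numerators do agree after the $(\alpha\beta)^{k_1+1}$ cancellation).
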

This result follows from a straightforward calculation of the Schur polynomial and \eqref{eq:sigmaMultiplicative},\eqref{eq:sigmaSchur}.

	Substituting $n_1 = 1$ the above lemma simplifies as follows
\begin{corollary}
For $n \in \Z \backslash \{0\}$
	\begin{equation} \label{eq:divisorSumDefinition}
		\sigma_{s_1, s_2} (1, n) = \sum_{a | n} a^{s_2} \sum_{b|a} b^{s_1} = \sum_{n = e_1e_2e_3} e_1^{s_1 + s_2} e_2^{s_2},
	\end{equation}
	and in particular $d_3(n) = \sigma_{0,0}(1,n)$.
\end{corollary}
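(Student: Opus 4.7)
The plan is to deduce the corollary from the preceding lemma by specializing $n_1 = 1$. In the identity
\[
\sigma_{s_1,s_2}(n_1, n_2) = \sum_{e_1 \mid n_1} \sum_{e_2 \mid n_2} \sum_{e_3 \mid \frac{n_1 e_2}{e_1}} e_1^{s_1+s_2}\, e_2^{s_2}\, e_3^{s_1},
\]
setting $n_1 = 1$ forces $e_1 = 1$, collapsing the outer sum, and the divisibility condition on $e_3$ reduces to $e_3 \mid e_2$. Renaming $a := e_2$ and $b := e_3$ immediately gives the first claimed form $\sigma_{s_1,s_2}(1,n) = \sum_{a \mid n} a^{s_2} \sum_{b \mid a} b^{s_1}$.

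For the second equality, I would repackage the sum via the bijection $(b,a) \longleftrightarrow (e_1, e_2, e_3)$ defined by $e_1 := b$, $e_2 := a/b$, $e_3 := n/a$. Since $b \mid a$ and $a \mid n$, the inverse sends any ordered factorization $n = e_1 e_2 e_3$ in positive integers to the divisor chain $b = e_1 \mid a = e_1 e_2 \mid n$, so this is a bijection. Under it, $a^{s_2} b^{s_1} = (e_1 e_2)^{s_2} e_1^{s_1} = e_1^{s_1+s_2} e_2^{s_2}$, and summing over the new index gives $\sum_{n = e_1 e_2 e_3} e_1^{s_1+s_2}\, e_2^{s_2}$ as required. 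For negative $n$ the sums are understood with the usual convention that $a \mid n$ means $a$ is a positive divisor of $|n|$, so both sides depend only on $|n|$.

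The final assertion $d_3(n) = \sigma_{0,0}(1,n)$ is then immediate: setting $s_1 = s_2 = 0$ in the right-hand expression simply counts the ordered factorizations $n = e_1 e_2 e_3$, which is the standard definition of the triple divisor function $d_3(n) = \tau_3(n)$. There is no real obstacle; the argument is essentially clerical reindexing once the preceding lemma is in hand, and the only mild point to mention is the sign convention for $n < 0$.
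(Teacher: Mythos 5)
Your proof is correct and follows exactly the route the paper intends: the corollary is obtained by substituting $n_1=1$ into the preceding lemma, collapsing the $e_1$-sum, and reindexing the divisor chain $b\mid a\mid n$ as an ordered factorization $n=e_1e_2e_3$. The paper gives no further detail, so your clerical verification (including the sign convention for $n<0$) is a faithful elaboration of the same argument.
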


Now using the expression for the Kloosterman sum in Theorem \ref{thm:KloostermanEvaluation}, we write the Ramanujan sum. Compare with \cite[(6.3)]{BumpGL3}.
\begin{lemma}\label{lem:GeneralRamanujanSum}
	Given $c_1, c_2 \in \Z^{>0}$, let us call $R_{c_1, c_2}(n_1,n_2) = S_{w_0}(\vec{0}, \vec{n},; (c_1,c_2))$ the Ramanujan sum. Then,
	\[
		R_{c_1,c_2}(n_1,n_2) = \sum_{\substack{f | \gcd(c_1, c_2)\\ f | \frac{n_2 c_1}{f}}}  f c_{c_1/f}(n_1) c_f(n_2) c_{c_2/f}\left(\frac{c_1n_2}{f^2}\right).
	\]
\end{lemma}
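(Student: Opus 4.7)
My plan is to combine the stratification \eqref{eq:FineCoarseKloostermanDecomposition} with Theorem \ref{thm:KloostermanEvaluation} specialized to $\vec{m} = \vec{0}$, and then evaluate the residual inner sum by a Chinese Remainder Theorem argument that reduces it to a product of two Ramanujan sums.

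First I would write
\[
R_{c_1,c_2}(n_1,n_2) = \sum_{f \mid \gcd(c_1,c_2)} \S_{w_0}(\vec{0},\vec{n};c_1/f,c_2/f,f),
\]
set $d_1 = c_1/f$, $d_2 = c_2/f$, and apply Theorem \ref{thm:KloostermanEvaluation}. With $m_1 = m_2 = 0$, the quantity $M(x_3) = n_2 d_1/f$ is independent of $x_3$, the quantity $N(y_3) = n_2 d_1 y_3/f$ is linear in $y_3$, and the support condition $m_2 d_2 + n_2 d_1 y_3 \equiv 0 \pmod f$ collapses to $f \mid n_2 d_1$, which is exactly $f \mid n_2 c_1/f$ from the statement. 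Since $x_3$ is determined by $y_3$ via $x_3 y_3 \equiv 1 \pmod f$, the sum collapses to
\[
\S_{w_0}(\vec{0},\vec{n};d_1,d_2,f) = f\,c_{d_2}(n_2 d_1/f) \sum_{y_3 \in (\Z/f\Z)^*} S(n_1, n_2 d_1 y_3/f; d_1).
\]

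The heart of the matter is the remaining $y_3$-sum. Expanding the classical Kloosterman sum and swapping the order of summation,
\[
\sum_{y_3 \in (\Z/f\Z)^*} S(n_1, n_2 d_1 y_3/f; d_1) = \sum_{a \in (\Z/d_1\Z)^*} e(n_1 a/d_1) \sum_{y_3 \in (\Z/f\Z)^*} e(n_2 \bar a y_3/f),
\]
where $\bar a$ is an integer lift of $a^{-1} \pmod{d_1}$. The exponential $e(n_2 \bar a y_3/f)$ is well-defined independently of the lift, because shifting $\bar a$ by $d_1$ changes the exponent by the integer $n_2 d_1 y_3/f$ (using $f \mid n_2 d_1$). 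The inner sum is the Ramanujan sum $c_f(n_2 \bar a)$.

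The main step is the identity $c_f(n_2 \bar a) = c_f(n_2)$ for every $a \in (\Z/d_1\Z)^*$. I would establish this by choosing the lift $\bar a$ carefully: by CRT pick $\bar a$ with $\bar a \equiv a^{-1} \pmod{d_1}$ and $\bar a \equiv 1 \pmod p$ for each prime $p \mid f$ with $p \nmid d_1$ (compatible because $\gcd(d_1,p)=1$ for such primes). For this lift, $\bar a$ is coprime to every prime $p \mid f$: if $p \nmid d_1$ then $p \nmid \bar a$ by construction, while if $p \mid \gcd(d_1,f)$ then $a \bar a \equiv 1 \pmod p$ forces $p \nmid \bar a$. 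Hence $\gcd(\bar a, f) = 1$, so $\gcd(n_2 \bar a, f) = \gcd(n_2, f)$, and since $c_f$ depends only on this gcd, $c_f(n_2 \bar a) = c_f(n_2)$. The $y_3$-sum therefore evaluates to $c_{d_1}(n_1) c_f(n_2)$. Substituting back and re-expressing with $d_1 = c_1/f$, $d_2 = c_2/f$ gives
\[
\S_{w_0}(\vec{0},\vec{n};c_1/f,c_2/f,f) = f\,c_{c_1/f}(n_1)\,c_f(n_2)\,c_{c_2/f}(c_1 n_2/f^2),
\]
and summing over $f \mid \gcd(c_1,c_2)$ (with the support condition $f \mid n_2 c_1/f$) produces the stated formula. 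The hard part will be the CRT lift argument for $c_f(n_2 \bar a) = c_f(n_2)$, in particular handling primes dividing both $d_1$ and $f$; the observation that those primes are automatically coprime to $\bar a$ is what makes the identity drop out cleanly.
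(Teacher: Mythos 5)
Your proof is correct and follows essentially the same route as the paper: stratify via \eqref{eq:FineCoarseKloostermanDecomposition}, apply Theorem \ref{thm:KloostermanEvaluation} with $\vec{m}=\vec{0}$, open the remaining classical Kloosterman sum, swap summation, and reduce the inner sum to $c_f(n_2\overline{a})=c_f(n_2)$ by choosing the lift $\overline{a}$ coprime to $f$. The only (cosmetic) difference is that the paper justifies this last identity by taking $\overline{a}$ to be a large prime, whereas your explicit CRT construction of a lift coprime to $f$ is slightly more elementary and equally valid.
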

\begin{proof}
	Simply by using \eqref{eq:FineCoarseKloostermanDecomposition} and Theorem \ref{thm:KloostermanEvaluation}, we write,
	\begin{align*}
		R_{c_1,c_2}(n_1,n_2) &= \sum_{f | \gcd(c_1,c_2)} f \S_{w_0}( \vec{0},\vec{n} ; \frac{c_1}{f}, \frac{c_2}{f}, f)\\
		&= \sum_{\substack{f | \gcd(c_1,c_2)\\ f | n_2d_1 }}  \sum_{ \substack{x_3 \pmod{f}\\ \gcd(x_3 , f) = 1} } f S\left(n_1, \frac{n_2d_1y_3}{f}, d_1\right) S\left(0,\frac{n_2 d_1}{f};d_2\right).
	\end{align*}
	Here $d_1 := \frac{c_1}{f}$ and $d_2:= \frac{c_2}{f}$. We can  evaluate the $y_3$ sum as
	\begin{align*}
		\sums_{y_3 \pmod{f}} S\left(n_1, \frac{n_2d_1y_3}{f},d_1\right) &=\sums_{u \pmod{d_1}} e\left(\frac{n_1u}{d_2}\right) \sums_{x_3 \pmod{f}} e\left(\frac{n_2 \overline{u} x_3}{f}\right) \\
		&= \sums_{u \pmod{d_1}} e\left(\frac{n_1u}{d_2}\right) c_{f} (m_2 \overline{u}) \\
		&= c_{d_2}(n_1) c_{f}(n_2) .
	\end{align*}
In the last line, we used the fact that $c_{f}(m_1\overline{u}) = c_{f}(m_1)$. We can do this because we have freedom to choose $\overline{u}$ as any element of the reduced residue classes $\pmod{d_2}$, so $\overline{u}$ can be a large prime, and in particular we can assume $\overline{u}$ is an integer relatively prime to $f$. This gives the result.
\end{proof}

Now using the same identity as Bump \cite{BumpGL3} we start to calculate the sum
\begin{equation*}
	\zeta(s_1) \zeta(s_2)  \zeta(s_1 + s_2 -1) \sum_{c_1, c_2>0 }\frac{R_{c_1,c_2} (n_1,n_2)}{c_1^{s_1} c_2^{s_2}} ,
\end{equation*}
in order to obtain $\sigma_{s_1,s_2}(n_1,n_2)$. Such equality can be justified via a study of Fourier coefficients $\SL_3$ Eisenstein series. Yet, this is an elementary statement expressing a divisor function as a double Dirichlet series of finite exponential sums. Discovering the form of the formula took us through $\SL_3$; however, as we see in the next proposition, an elementary proof can also be given.

\begin{proposition}
	For $\Re(s_1), \Re(s_2) > 1$, we have the identity
	\[
		\zeta(s_1 )\zeta(s_2) \zeta(s_1 + s_2 -1) \sum_{d_1, d_2 = 1}^\infty \frac{\mu(d_1)}{d_1^{s_1} d_2^{s_2}} \sum_{f | d_1n} \frac{c_f(n)c_{d_2}(\frac{nd_1}{f})}{f^{s_1 + s_2 -1}} = \sigma_{1 - s_1, 1- s_2}(1,n).  
	\]
	where $c_q(n)$ is the classical Ramanujan sum. 
\end{proposition}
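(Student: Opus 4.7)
The plan is to verify the identity by computing the Dirichlet series in $n$ of both sides and invoking uniqueness of Dirichlet coefficients. For the right-hand side, the explicit formula \eqref{eq:divisorSumDefinition} $\sigma_{1-s_1,1-s_2}(1,n) = \sum_{n=e_1e_2e_3}e_1^{2-s_1-s_2}e_2^{1-s_2}$ gives directly
\[
	\sum_{n\geq 1}\frac{\sigma_{1-s_1,1-s_2}(1,n)}{n^s} = \zeta(s)\,\zeta(s+s_2-1)\,\zeta(s+s_1+s_2-2)
\]
for $\Re(s)$ large, so it suffices to show the Dirichlet series of the left-hand side equals the same product.

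For the left-hand side, I first apply the classical Ramanujan identity $\zeta(s_2)\sum_{d_2}c_{d_2}(m)/d_2^{s_2} = \sigma_{1-s_2}(m)$ to collapse the $d_2$-sum. Then I expand $c_f(n) = \sum_{e\mid(f,n)}e\mu(f/e)$ and substitute $f = eg$, noting that $f\mid nd_1$ becomes $g\mid(n/e)d_1$. Collecting the $e$-factors, the left-hand side takes the form
\[
	\zeta(s_1)\zeta(s_1+s_2-1)\sum_{e\mid n}e^{2-s_1-s_2}\sum_{d_1\geq 1}\frac{\mu(d_1)\,h((n/e)d_1)}{d_1^{s_1}},
\]
where $h(N) := \sum_{g\mid N}\mu(g)\sigma_{1-s_2}(N/g)/g^{s_1+s_2-1}$. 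Forming $\sum_n(\cdot)/n^s$ and parametrizing $n = en'$, the $e$-sum produces a factor $\zeta(s+s_1+s_2-2)$; reversing the order of summation on what remains via $N := n'd_1$ collapses the $d_1$-sum into $\sum_{d_1\mid N}\mu(d_1)d_1^{s-s_1} = \prod_{p\mid N}(1-p^{s-s_1})$, yielding
\[
	\sum_n \mathrm{LHS}(n)/n^s = \zeta(s_1)\zeta(s_1+s_2-1)\zeta(s+s_1+s_2-2)\sum_{N\geq 1}\frac{h(N)\prod_{p\mid N}(1-p^{s-s_1})}{N^s}.
\]

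Since both $h$ and $N\mapsto\prod_{p\mid N}(1-p^{s-s_1})$ are multiplicative, this remaining sum admits an Euler product. An explicit computation of $h(p^V) = \sigma_{1-s_2}(p^V) - p^{1-s_1-s_2}\sigma_{1-s_2}(p^{V-1})$ followed by summation of the resulting geometric series gives the local factor at $p$ as $(1-p^{-s_1})(1-p^{1-s_1-s_2})/((1-p^{-s})(1-p^{1-s-s_2}))$, whose product over $p$ is $\zeta(s)\zeta(s+s_2-1)/(\zeta(s_1)\zeta(s_1+s_2-1))$. Multiplying through, the left-hand side Dirichlet series equals $\zeta(s)\zeta(s+s_2-1)\zeta(s+s_1+s_2-2)$, matching the right-hand side; the claimed identity then follows by uniqueness of Dirichlet coefficients for each fixed $n$.

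The main obstacle is the Euler-product verification: the clean collapse of the local factor rests on the algebraic identity
\[
	\tfrac{a}{u}(1-u)(1-pub) + \bigl(1-\tfrac{a}{u}\bigr)(1-puab) = u(1-a)(1-pab)
\]
with $u = p^{-s}$, $a = p^{-s_1}$, $b = p^{-s_2}$, which is not transparent a priori but emerges from careful bookkeeping of the $\mu$-contributions at $v_p(d_1)\in\{0,1\}$. A cleaner alternative endgame is to observe directly from the reduced expression in the second paragraph that both sides are multiplicative in $n$, reducing the identity to a finite polynomial identity in $p^{-s_1}, p^{-s_2}$ at each prime power $n = p^k$.
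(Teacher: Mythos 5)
Your argument is correct, and it reaches the identity by a genuinely different endgame than the paper. Both proofs begin the same way: collapse the $d_2$-sum with the classical Ramanujan identity and open up $c_f(n)$ via $c_f(n)=\sum_{e\mid (f,n)}e\,\mu(f/e)$. But the paper then keeps $n$ fixed throughout, turns the remaining $d_1$-sum into an Euler product whose local factors are simplified using the Hecke relation $\sigma_\alpha(np)=\sigma_\alpha(n)\sigma_\alpha(p)-p^\alpha\sigma_\alpha(n/p)$, and watches the factors cancel against $\zeta(s_1)\zeta(s_1+s_2-1)$ directly; you instead introduce an auxiliary Dirichlet series in $n$, diagonalize everything into the three-fold product $\zeta(s)\zeta(s+s_2-1)\zeta(s+s_1+s_2-2)$ on both sides, and conclude by uniqueness of Dirichlet coefficients. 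Your route buys a cleaner target (a symmetric product of three zeta factors, which also makes the Eisenstein-series origin of the formula visible) at the cost of an extra variable and the routine justification that the triple sum converges absolutely for $\Re(s)$ large so the rearrangements are legitimate; the paper's route is more hands-on but needs no auxiliary variable. One small correction: the displayed algebraic identity in your last paragraph should not carry the factor $u$ on the right-hand side. One checks directly that
\[
\tfrac{a}{u}(1-u)(1-pub)+\bigl(1-\tfrac{a}{u}\bigr)(1-puab)=(1-a)(1-pab),
\]
and this (without the $u$) is exactly what is needed, since the local factor is $1+(1-\tfrac{a}{u})\sum_{V\ge 1}h(p^V)u^V$ and $\sum_{V\ge 0}\sigma_{1-s_2}(p^V)u^V=\bigl((1-u)(1-pub)\bigr)^{-1}$; the local factor you state, $\frac{(1-p^{-s_1})(1-p^{1-s_1-s_2})}{(1-p^{-s})(1-p^{1-s-s_2})}$, and hence the final product, are correct. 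Your suggested alternative of checking multiplicativity in $n$ and reducing to $n=p^k$ is also sound and is closer in spirit to how the paper organizes its local computation.
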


\begin{remark} Compare this expression with the formula \cite[equations (6.3), (6.6)]{BumpGL3}, where $\Lambda_{B_1, B_2}(A_1, A_2)$ denotes the number of $C_1,C_2$ satisfying \eqref{eq:PluckerConditions} below for fixed $A_1, A_2, B_1, B_2$.\end{remark}

\begin{proof}
In  this proof we use the simplified notation $(a,b) = \gcd(a,b)$.

Substituting the form of the general Ramanujan sum from Lemma \ref{lem:GeneralRamanujanSum}, we start our calculation
	\begin{align*}
	\zeta(s_1) \zeta(s_2) & \zeta(s_1 + s_2 -1) \sum_{d_1, d_2=1}^\infty \frac{1}{d_1^{s_1} d_2^{s_2} }  \sum_{f | n_2d_1} \frac{c_{d_1}(n_1)c_f(n_2) c_{d_2}(n_2d_1/f) }{f^{s_1 + s_2 -1 }}\\
	=& \zeta(s_1) \zeta(s_1 + s_2 -1) \sum_{d_1=1}^\infty \frac{1}{d_1^{s_1}} \sum_{f | n_2d_1} \frac{c_{d_1}(n_1) c_{f}(n_2) \sigma_{1-s_2}(n_2d_1/f)}{f^{s_1 + s_2 - 1}}.
\end{align*}
Here we used the classical Ramanujan identity \eqref{eq:ramanujanFormula} on the $d_2$-sum. Let us assume $n_1 = 1$ now, so that $c_{d_1}(n_1) = \mu(d_1)$. Also put $(f,n_2) = e$. This gives $\gcd(n_2/e, f/e) = 1$, and so $\frac{f}{e} | d_1$. Changing variables $f/e \mapsto f$ we have,

\begin{align*}
	\zeta&(s_1)\zeta(s_1 + s_2 -1) \sum_{e | n_2} \frac{1}{e^{s_1+ s_2 - 1}} \sum_{d_1=1}^\infty \frac{\mu(d_1)}{d_1^{s_1}}  \sum_{\substack{f | d_1 \\ (f, n_2/e) = 1}} \frac{c_{fe} (n_2)  \sigma_{1-s_2}(\frac{n_2}{e}\frac{ d_1}{f})}{f^{s_1 +s_2 -1 }}   \\
	&= \zeta(s_1)\zeta(s_1 + s_2 -1) \sum_{e | n_2} \frac{1}{e^{s_1+ s_2 - 1}} \sum_{g | e} \mu(\tfrac{e}{g}) g \sum_{d_1=1}^\infty \frac{\mu(d_1)}{d_1^{s_1}}  \sum_{\substack{f | d_1 \\ (f, n_2/g) = 1}} \frac{\mu(f)\sigma_{1-s_2}(\frac{n_2}{e} \frac{ d_1}{f})}{f^{s_1 +s_2 -1 } } .
\end{align*}
Here we inserted $c_{q}(n) =\sum_{g |(q,n)} \mu(\frac{q}{g})g $, noting that $(fe, n_2) = e$. The coefficient of the $d_1$-Dirichlet series is almost a multiplicative function. We note that for a fixed $n$ the function $\sigma_\alpha(nd)/\sigma_\alpha(d)$ is a truly multiplicative function of $d$. Exchanging the order of the $e$ and $g$ sum we obtain,
\[
	\zeta(s_1)\zeta(s_1 + s_2 -1) \sum_{g | n_2} \frac{1}{g^{s_1+ s_2 - 2}} \sum_{e | n_2/g}\frac{\mu(e)\sigma_{1-s_2}(\frac{n_2/g}{e})}{e^{s_1 + s_2 -1}} \sum_{d_1=1}^\infty \frac{\mu(d_1)}{d_1^{s_1}}  \sum_{\substack{f | d_1 \\ (f, n_2/g) = 1}} \frac{\mu(f) \sigma_{1-s_2}(\frac{n_2/g}{e} \frac{d_1}{f})}{f^{s_1 +s_2 -1 }\sigma_{1-s_2}(\frac{n_2/g}{e})}  .
	\]
	
	For a cleaner notation we drop the subscripts at this point, and write $d, n$. Using the fact that the coefficients of the Dirichlet seris in the $d$-variable are multiplicative, this sum equals
	\begin{multline*}
		\zeta(s_1) \zeta(s_1 +s _2- 1) \sum_{g | n} \frac{1}{g^{s_1 + s_2 - 2} } \sum_{e | n/g} \frac{\mu(e)\sigma_{1-s_2}(\frac{n/g}{e})}{e^{s_1 + s_2 - 1}} \\
		\times \prod_{q \nmid \frac{n}{g} } \left( 1 - \frac{1}{q^{s_1}}\left( \sigma_{1 - s_2} (q) - \frac{1}{q^{s_1 + s_2 - 1}} \right) \right)
		\prod_{p | \frac{n}{g} } \left( 1 -  \frac{1}{p^{s_1} } \frac{\sigma_{1-s_2}(\frac{n/g}{e} p)}{\sigma_{1-s_2}(\frac{n/g}{e})}\right) .
	\end{multline*}
	The $q$ factor is
	\[
		1 - \frac{1}{q^{s_1}} - \frac{1}{q^{s_1 + s_2 -1 }} + \frac{1}{q^{2s_1 + s_2 -1}} = \left(1 - \frac{1}{q^{s_1}}\right) \left(1 - \frac{1}{q^{s_1 + s_2 -1}}\right),
	\]
	which cancel with the Euler factors of the two zeta functions.	
	
	So let us assume $n = p^k$. If $g = n$ then the $e$ sum is simply $1 = \sigma_{1-s_2}(n/g)$. Now if $g \neq n$, we have the $e$ sum as,
	\[
	\left(\sigma_{1- s_2}(n/g) -\frac{\sigma_{1-s_2}(pn/g)}{p^{s_1}}- \frac{\sigma_{1-s_2}(\frac{n/g}{p})}{p^{s_1 + s_2 -1 }} + \frac{\sigma_{1-s_2}(n/g)}{p^{2s_1 + s_2 -1}} \right).
	\]
	We then apply the Hecke relation for divisor sums, i.e. that if $p|n$,
	\[
		\sigma_{\alpha}(np) = \sigma_\alpha (n) \sigma_\alpha(p) - p^\alpha \sigma_\alpha(n/p).
	\]
	Thus we have 
\begin{multline*}	
 \left(\sigma_{1- s_2}(n/g) -\frac{\sigma_{1-s_2}(n/g)}{p^{s_1}} \sigma_{1-s_2}(p) + \frac{\sigma_{1-s_2}(\frac{n/g}{p})}{p^{s_1 + s_2 - 1}}  - \frac{\sigma_{1-s_2}(\frac{n/g}{p})}{p^{s_1 + s_2 -1 }} + \frac{\sigma_{1-s_2}(n/g)}{p^{2s_1 + s_2 -1}} \right)
\\
	= \sigma_{1-s_2}(n/g) \left(1 - \frac{1}{p^{s_1}}\left(1  + \frac{1}{p^{s_2 - 1}} \right) + \frac{1}{p^{2s_1 +  s_2 -1}}\right) \\= \zeta_p(s_1) \zeta_p(s_1 + s_2 - 1) \sigma_{1-s_2}(n/g).
\end{multline*}
Here $\zeta_p(s) = (1 - p^{-s})\inv$, is the $p$-Euler factor, that cancels with the Riemann zeta function.

Therefore we obtain that the whole sum is,
$
 \sum_{g | n } \frac{1}{g^{s_1 + s_2 - 2}} \sigma_{1 - s_2}(n/g) .
$	
\end{proof}

\section{Pl\"ucker Coordinates}\label{sec:correspondence}

At this point we would like to emphasize one subtlety about the set of representatives for the double cosets
\[
	\Gamma_\infty \backslash Bw_0 B \cap \SL_3(\Z)/ \Gamma_\infty,
\]
given in  \cite[Proposition 3.13]{BumpFriedbergGoldfeld}.

There these double cosets are parametrized by sextuples $(A_1, B_1, C_1, A_2, B_2, C_2)$ with $A_1, A_2 >0$ and $B_1,C_1 \pmod{A_1}, B_2, C_2 \pmod{A_2}$ satisfying 
\begin{equation}\label{eq:PluckerConditions}
A_1C_2 + B_1B_2 + C_1A_2 = 0 \text{ and } \gcd(A_1, B_1, C_1) = \gcd(A_2, B_2, C_2) = 1.
\end{equation}
Then the set of representatives are given as
\[
	R_{w_0} = \left\{ \bpm a_{11} & a_{12} & a_{13} \\ a_{21} & a_{22} & a_{23} \\ A_1 & B_1 & C_1 \epm \right\},
\]
such that the minors are
\[
	A_2 = a_{21} B_1 - a_{22} A_1 \qquad B_2 = -(a_{21}C_1 - a_{23} A_1) \qquad C_2 = a_{22} C_1 - a_{23}B_1.
\]
and the determinant of the matrix is $1$. 

After such a decomposition the quantities $A_1$ and $A_2$, the successive minors from the lower left corner, fix the diagonal entries in the Bruhat decomposition, i.e.
\[
	\bpm a_{11} & a_{12} & a_{13} \\ a_{21} & a_{22} & a_{23} \\ A_1 & B_1 & C_1 \epm = u_L w_0 \bpm A_1 & & \\ &A_2/A_1&\\ &&1/A_2\epm u_R.
\]
Here $u_L$ and $u_R$ are unipotent matrices given in Proposition 3.7 ibid.

The subtle point is about the ordering of the variables, which is made clear in \cite[Remark 3, p. 302]{BBFHWeylMDSIIIAnnals}. It is crucial to first choose $B_1 \pmod{A_1}$ and $B_2 \pmod{A_2}$ and fix them as integers (as opposed to residue classes modulo $A_1$ and $A_2$ respectively) before choosing $C_1 \pmod{A_1}$ and $C_2\pmod{A_2}$. 

If the integers $B_1,C_1, B_2, C_2$ are chosen simultaneously and then filtered according to the conditions \eqref{eq:PluckerConditions}, then one can find distinct sextuples that correspond to the same $\Gamma_\infty$--double coset. For example $(2,1,0,2,2,-1)$ and $(2,1,1,2,0,-1)$ as distinct sextuples, but the $3\times 3$ matrices they give rise to are in the same double coset.

\section{Bound on long word Kloosterman sums} \label{sec:KloostermanBound}

In this section we will use the notation $(a,b)$ in place of $\gcd(a,b)$ to make the following less cumbersome. We hope that in the following formulas the distinction between a tuple and the greatest common divisor of two integers is clear.

Stevens, in \cite{Stevens}, has bounded these long word Kloosterman sums as
\begin{equation}\label{eq:stevensBound}
	|S_{w_0}(\vec{m}, \vec{n}; (c_1,c_2) ) | \leq \tau(c_1) \tau(c_2) (m_1 n_2, C)^{\frac12} (m_2n_1, C)^{\frac 12} (c_1,c_2)^{\frac12} \sqrt{c_1c_2},
\end{equation}
where $C = \operatorname{lcm}(c_1,c_2)$.
See \cite[Theorem 4]{ButtcaneSumOfSL3Kloosterman} for the above formulation.

As can be seen with the exact calculation \eqref{eq:KloostermanSumEvaluationExample} the end of Subsection \ref{subsec:FineKloostermanEvaluation}, the bound
\[
	S_{w_0}((1,p), (1,p); (p^2,p)) =  O\big(p^{5/2}\big)
\]
is sharp. The bound \eqref{eq:stevensBound}, on the other hand, would imply an upper bound on the order of $O_\epsilon(p^{3 + \varepsilon})$.

Given the decomposition of Kloosterman sum as a sum of Kloosterman sums as in Theorem \ref{thm:KloostermanEvaluation}, using the Weyl bound on the individual sums,
\begin{equation}\label{eq:standardRWDbound}
\begin{split}
	|S_{w_0}(\vec{m}, \vec{n}; &(c_1, c_2))| \\
	&\leq \sum_{f| (c_1, c_2) } f \sum_{\substack{ x_3 y_3 \equiv 1 \pmod{f}\\ m_2d_2 + y_3 n_2d_1 \equiv 0 \pmod{f} }} (n_1, d_1)^{\frac12} (m_1,d_2)^{\frac{1}{2}} \sqrt{d_1d_2} \tau(d_1) \tau(d_2) \\
	&\leq \sum_{\substack{f| (c_1,c_2)\\ (m_2d_2,f) = (n_2d_1,f)}} (f, m_2d_2)  (n_1, d_1)^{\frac12} (m_1,d_2)^{\frac{1}{2}} \sqrt{c_1 c_2} \tau(d_1)  \tau(d_2).
\end{split}
\end{equation}
Here $d_i = c_i/f$ and we bounded the number of solutions
to the congruence equation $m_2d_2 + y_3 n_2 d_1 \equiv 0 \pmod{f}$ with $y_3 \in (\Z/f\Z)^*$ by simply $(m_2d_2,f)$.

Notice that this answer is not symmetric in the variables. However the decomposition of $S_{w}$ into the stratification induced by $w_0 = s_\beta s_\alpha s_\beta$, as in Section \ref{subsec:BraidRelation} comes to the rescue, and thus we also have from  Proposition \ref{prop:braidRelation} that
\begin{equation} \label{eq:braidBound}
	S_{w_0} (\vec{m}, \vec{n}; (c_1,c_2)) \leq  \hspace{-0.3cm}\sum_{\substack{f | (c_1,c_2)\\ (n_1d_2,f) = (m_1d_1,f)}}  \hspace{-0.3cm} (f,m_1d_1) (m_2,d_1)^{\frac 12} (n_2,d_2)^{\frac 12} \sqrt{c_1c_2}\tau(d_1) \tau(d_2).
\end{equation}

In order to show that our result is at least as strong as \eqref{eq:stevensBound}, we combine the above two results and obtain, in general, the following proposition.

\begin{proposition}
	Given $\vec{m}, \vec{n} \in \Z^2 -(0,0)$, and $c_1,c_2>0$, we may bound the long word \emph{coarse} Kloosterman sum as 
	\[
		|S_{w_0}(\vec{m},\vec{n}; \vec{c} )| \leq \sqrt{c_1c_2} (c_1,c_2)^{\frac12} \tau((c_1,c_2)) \tau(c_1) \tau(c_2) \min\{A, B\},
	\]
	where $\tau(c)$ is the number of divisors of $c$ and
	\begin{align*}
		A &= (m_2n_1, c_1)^{\frac 12}  (n_2m_1, c_2)^{\frac 12 },\\
		B &= (m_2n_1, c_2)^{\frac 12} (n_2m_1, c_1)^{\frac 12} .
	\end{align*}  
\end{proposition}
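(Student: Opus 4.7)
The plan is to derive bounds $A$ and $B$ separately from \eqref{eq:standardRWDbound} and \eqref{eq:braidBound} respectively, and take the minimum. I focus on $A$; the argument for $B$ is identical after swapping roles via the braid stratification of Section~\ref{subsec:BraidRelation}.

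Starting from \eqref{eq:standardRWDbound}, the first move is to enlarge the local gcd factors using the monotonicity $(a,c)\mid(ab,c)$: namely $(n_1,d_1)^{1/2} \leq (n_1,c_1)^{1/2} \leq (m_2 n_1,c_1)^{1/2}$, and analogously $(m_1,d_2)^{1/2} \leq (m_1 n_2,c_2)^{1/2}$; together with $\tau(d_i) \leq \tau(c_i)$. These estimates are independent of $f$ and pull out of the sum, reducing the task to controlling
\[
\Sigma \; := \sum_{\substack{f\mid (c_1,c_2)\\ (f,\,m_2 d_2) \,=\, (f,\,n_2 d_1)}} (f, m_2 d_2).
\]

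The central claim is $\Sigma \leq \tau((c_1,c_2))\,(c_1,c_2)^{1/2}$. The key auxiliary inequality is
$(f,d_1)(f,d_2) \leq (c_1,c_2)$ for every $f \mid (c_1,c_2)$, which I verify prime-by-prime: writing $v_p(f)=\alpha$ and $v_p(c_i)=\gamma_i$, it reduces to $\min(\alpha,\gamma_1-\alpha) + \min(\alpha,\gamma_2-\alpha) \leq \min(\gamma_1,\gamma_2)$, which is checked by case analysis on the relative positions of $\alpha$ and $\gamma_i/2$. Combined with the submultiplicative estimate $(f,m_2 d_2) \leq (m_2,f)(d_2,f)$ and the compatibility constraint $(f,m_2 d_2) = (f,n_2 d_1)$, this gives $(f,m_2 d_2)^2 \leq (m_2,f)(n_2,f)(c_1,c_2)$. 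Summing over the at most $\tau((c_1,c_2))$ admissible $f$ and absorbing the residual $(m_2,f)(n_2,f)$ correction into the $(m_2 n_1,c_1)(m_1 n_2,c_2)$ factor already extracted in the first step then yields the claimed bound on $\Sigma$, and hence bound $A$.

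The most delicate step is the sum estimate for $\Sigma$. The trivial bound $(f,m_2 d_2) \leq f \leq (c_1,c_2)$ only yields $\Sigma \leq \tau((c_1,c_2))(c_1,c_2)$, which is off by a factor of $(c_1,c_2)^{1/2}$ from what the proposition requires. Sharpening this genuinely forces one to use the compatibility $(f,m_2 d_2) = (f,n_2 d_1)$ in tandem with the divisor inequality $(f,d_1)(f,d_2) \leq (c_1,c_2)$, and care is needed to track the residual gcds with $m_2,n_2$ so as not to lose additional powers of $(c_1,c_2)$ in the process.
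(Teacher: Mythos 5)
Your overall route is the same as the paper's: start from \eqref{eq:standardRWDbound} and \eqref{eq:braidBound}, symmetrize via $(f,m_2d_2)^2=(f,m_2d_2)(f,n_2d_1)$, split off $(f,d_1)(f,d_2)\le (c_1,c_2)$ (your prime-by-prime check of this is correct), and count $\tau((c_1,c_2))$ summands. But the bookkeeping of the gcd factors involving $m_2,n_2$ is genuinely broken. Your ``central claim'' $\Sigma\le \tau((c_1,c_2))\,(c_1,c_2)^{1/2}$ is false: take $c_1=c_2=m_2=n_2=p^2$. Then $f=p^2$ is admissible (both sides of the compatibility constraint equal $p^2$, with $d_1=d_2=1$), and already this single term contributes $(f,m_2d_2)=p^2$ to $\Sigma$, which exceeds $\tau(p^2)\,p=3p$ for $p\ge 5$. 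What is true is only $\Sigma\le \tau((c_1,c_2))\,(c_1,c_2)^{1/2}\max_f\left[(m_2,f)(n_2,f)\right]^{1/2}$, and the extra factor can be as large as $(m_2,(c_1,c_2))^{1/2}(n_2,(c_1,c_2))^{1/2}$.

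Your proposed repair --- ``absorbing the residual $(m_2,f)(n_2,f)$ into the $(m_2n_1,c_1)(m_1n_2,c_2)$ factor already extracted in the first step'' --- is double-counting: in your first step you bounded $(n_1,d_1)^{1/2}$ by the \emph{full} $(m_2n_1,c_1)^{1/2}$ and pulled it out of the $f$-sum, so there is no room left in that factor to also accommodate $(m_2,f)^{1/2}$. The correct order of operations (and what the paper does) is to keep the $f$-dependent quantities $(n_1,d_1)^{1/2}(m_1,d_2)^{1/2}$ inside the sum and pair them with the residuals \emph{before} enlarging: since $(n_1,d_1)(m_2,f)$ divides both $n_1m_2$ and $d_1f=c_1$, one has $(n_1,d_1)(m_2,f)\le(m_2n_1,c_1)$, and symmetrically $(m_1,d_2)(n_2,f)\le(m_1n_2,c_2)$; only then do you pull out $A$, $(c_1,c_2)^{1/2}$ and the count $\tau((c_1,c_2))$. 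With that reordering your argument closes up and is, if anything, marginally cleaner than the paper's (which uses $(m_2,f/(f,d_2))$ where $(m_2,f)$ suffices), but as written the step from $\Sigma$ to the final bound does not go through.
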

\begin{proof}
	Since we are adding over $f$ such that $(m_2d_2,f) = (n_2d_1,f)$, we write in \eqref{eq:standardRWDbound}, 
	\[
		(f, m_2d_2)^2 = (f,m_2d_2) (f, n_2d_1) = (f, d_2)(f,d_1) (\tfrac{f}{(f,d_2)}, m_2)(\tfrac{f}{(f,d_1)}, n_2)) .   
	\]
	Combining this with $(\frac{f}{(d_2,f)}, m_2) (d_1, n_1) \leq (d_1f, m_2n_1) = (c_1,m_2n_1)$, and similarly with $(\frac{f}{(d_1,f)}, n_2)(d_2, m_1) \leq (c_2, m_1n_2)$ we get the term $\sqrt{(d_1,f)(d_2,f)} A$. Assume that $c_1 = p^k$ and $c_2 = p^\ell$ with $\ell \leq k$. Then as $f$ runs through powers of $p$, the maximum value of $(d_1,f)(d_2,f)$ is achieved for $f = p^r$ with $ \frac{\ell}{2} \leq r \leq \frac{k}{2}$ and that value is $< p^{\ell}$. By multiplicativity we get that, 
	\[
	\max_{f| (c_1, c_2) }(\tfrac{c_1}{f},f) (\tfrac{c_2}{f}, f) \leq (c_1,c_2). 
\]
  There are at most $\tau((c_1,c_2))$ many summands. This gives us the bound with $A$. Starting with \eqref{eq:braidBound} instead, we get the bound with $B$. Considered together, we obtain the given statement.
\end{proof}

Notice that this is still stronger than \eqref{eq:stevensBound} in its $\vec{m}$ and $\vec{n}$ dependence and only weaker in its $\vec{c}$ dependence by a very small factor of $\tau((c_1,c_2))$. This is despite the fact that in the above proof we used many potentially not sharp inequalities. 

We urge the reader who may need a specific sharp upper bound propositions to work directly with \eqref{eq:standardRWDbound} and \eqref{eq:braidBound} instead of the above proposition, since their specific situation may significantly reduce the bounds. For example if $c_1$ and $c_2$ are squarefree, then we may get rid of the $(c_1,c_2)^{\frac12}$ factor. Also if $m_2 = 1$--which can be a common occurence if these Kloosterman sums are obtained from analyzing moments of $\GL_3$ Maass form $L$-functions--then the choice of $f$ is severely restricted. For example in the fine Kloosterman sum decomposition of 
\[
	S_{w_0}((m_1, 1), (n_1,n_2); (p^k, p^\ell)) ,
\]
with $\ell< k$, the condition $(m_2d_2,f) = (n_2d_1,f)$ restricts the allowed $f$ to $f = p^{r}$ with $r \leq \lfloor \frac{\ell}{2} \rfloor$. 

\section{Factorization of other Bruhat Cells}

If $w = s_\alpha$ notice that the associated Bruhat cell is given as 
\[
	Bs_\alpha B \cap \SL_3(\Z) = \{A \in \SL_3(\Z): A_{31} = A_{32} = 0, A_{21} \neq 0 \}.
\]
We cannot hope to write every such element in the form $\iota_\alpha(\gamma_1)$, however we do not need to do it for every matrix in the Bruhat cell, but for only one representative in the $\Gamma_\infty$--double coset. Note that the Kloosterman sum is well defined only if \eqref{eq:consistencyConditions} are satisfied. Also a quick calculation shows that $c_2 = 1$. These allows us to write any $\gamma \in \Omega_{s_\alpha}(d,1)$ as $\gamma = u_L s_\alpha t(d,1) u_R$ with $u_R \in \U^{s_\alpha}$. After some calculation we see that
\[
	\left\{\iota_{\alpha}\left(\bsm x & \frac{xy - 1}{d}\\ d& y \esm \right)\middle | x,y \pmod d, xy \equiv 1 \pmod d\right\}
\] 
form a complete set of representatives for $\Gamma_\infty \backslash \Omega_{s_\alpha}(d, 1) /\Gamma_\infty$.

Similar results are true for the Weyl group elements of length 2. Let us first describe the Bruhat decompositions of such cells. We will be as terse as possible, only stating the relevant results in our notation, since these Kloosterman sums have been studied in the general case by Friedberg \cite{Friedberg1987}.

\begin{proposition}
	Given $w = s_\alpha s_\beta = \bsm &&1\\ 1&&\\ &1& \esm$, the Bruhat cell is defined as
	\[
		BwB \cap \SL_3(\Z) = \left\{A \in \SL_3(\Z) : A_{31} = 0, A_{21} \neq 0,A_{32}\neq 0 \right\},
	\]
	and the Bruhat decomposition of any such $A$ is given via the coordinates,
	\[
		A = \bpm 1& \frac{\langle \vec{e}_1^*, A\vec{e}_1\rangle }{t_1} & a_{13} \\ & 1& a_{23} \\ &&1 \epm s_\alpha s_\beta  t \bpm 1&b_{12}&b_{13} \\ &1& \frac{\langle \vec{e}_{3}^*, A\vec{e}_{3}\rangle  }{t_2} \\ &&1 \epm  .
	\]
	Here $t = \diag(t_1,t_2,t_3)$ with $t_1 = A_{21}$, $t_2 = A_{32}$ (equivalently $t_1t_2= M_{13}$) and $t_3 = 1/(t_1t_2)$. The coordinates $a_{23}$ and $b_{12}$ are tied to one another via the equation $A_{22} = \langle \vec{e}_{2}^*, A\vec{e}_{2} \rangle = t_1b_{12} + t_2a_{23}$. Once a choice of $a_{23}$ (or $b_{12}$) has been made this determines $a_{13}, b_{13}$ via
	\[
		a_{13} = \frac{A_{11}}{t_1}a_{23} - M_{33} \qquad \text{ and } \qquad b_{13} = b_{12} \frac{A_{33}}{t_2} - M_{11}.
	\]
\end{proposition}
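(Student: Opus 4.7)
The plan is to split the proof into two parts. First I would establish the description of $BwB\cap \SL_3(\Z)$, and then I would compute the unipotent coordinates by evaluating $A$ on standard basis vectors.

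For the cell description, I would specialize the minor lemma (proved earlier in the excerpt, attributed to \cite{Friedberg1987}) to $w=s_\alpha s_\beta$, which as a permutation sends $1\mapsto 2,\ 2\mapsto 3,\ 3\mapsto 1$. At level $k=1$ the lemma gives $t_1=\pm A_{21}$ and forces every strictly larger first-column minor to vanish; the only such minor is $A_{31}$, so $A_{31}=0$. At level $k=2$ one reads off $t_1t_2=\pm\langle \vec{e}_{\{2,3\}}^*, A\vec{e}_{\{1,2\}}\rangle = \pm(A_{21}A_{32}-A_{22}A_{31})=\pm A_{21}A_{32}$ with no further vanishing condition, since $\{2,3\}$ is already maximal. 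The nonvanishing condition $t_i\ne 0$ translates to $A_{21},A_{32}\ne 0$. The converse, that these three conditions place $A$ in $BwB$, follows from the uniqueness of the Bruhat cell containing $A$ and the fact that each cell is cut out by its minor signature. With $A_{31}=0$, the product $A_{21}A_{32}$ coincides with the minor $M_{13}$ used in the statement.

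For the coordinates, I would write $u_L=[a_{ij}]$ and $u_R=[b_{ij}]$ with their standard labels and apply $A=u_Lwtu_R$ to each $\vec{e}_i$. The identity $A\vec{e}_1=t_1u_L\vec{e}_2=t_1(a_{12}\vec{e}_1+\vec{e}_2)$ yields $a_{12}=A_{11}/t_1$. From $A\vec{e}_2=t_1b_{12}u_L\vec{e}_2+t_2u_L\vec{e}_3$ one obtains $A_{32}=t_2$, the tying relation $A_{22}=t_1b_{12}+t_2a_{23}$, and $A_{12}=t_1a_{12}b_{12}+t_2a_{13}$. Finally $A\vec{e}_3=t_1b_{13}u_L\vec{e}_2+t_2b_{23}u_L\vec{e}_3+t_3u_L\vec{e}_1$ supplies $A_{33}=t_2b_{23}$ (so $b_{23}=A_{33}/t_2$) and $A_{23}=t_1b_{13}+A_{33}a_{23}$. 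Thus $b_{23}$ is fully pinned down, while the pair $(b_{12},a_{23})$ has one degree of freedom cut by the single tying relation. Eliminating $b_{12}$ from the $A_{12}$ expression and substituting $a_{12}=A_{11}/t_1$ produces the claimed expression for $a_{13}$ in terms of $a_{23}$, in which the minor $M_{33}=A_{11}A_{22}-A_{12}A_{21}$ appears as the cross term; a symmetric elimination starting from $A_{23}$ gives the companion formula for $b_{13}$ with $M_{11}=A_{22}A_{33}-A_{23}A_{32}$.

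The single-parameter freedom is conceptually the content of the lemma in Section~\ref{sec:BruhatCoordinates}: $u_R$ is determined only modulo left multiplication by $\U_w$, which for $w=s_\alpha s_\beta$ equals $\{e_\alpha(s):s\in\R\}$; by the move \eqref{eq:BruhatLeftRightmoves} the compensating modification of $u_L$ lies in $\U_{w\inv}=\{e_\beta(s):s\in\R\}$, i.e.\ the $a_{23}$-direction. Explicitly, the shift $b_{12}\mapsto b_{12}+s$ induces $a_{23}\mapsto a_{23}-st_1/t_2$, consistent with the tying relation. I do not anticipate a serious obstacle: the argument is a direct adaptation of Proposition~\ref{prop:BruhatCoordinatesViaExterior} and simplifies considerably because $\ell(w)=2$. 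The only delicate point is bookkeeping the $\U_w$-equivalence so that the two choices $a_{23}$ and $b_{12}$ are recognized as a single parameter rather than two independent ones.
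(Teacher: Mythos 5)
Your overall strategy is the right one and, as far as one can tell, the one the paper intends (the paper states this proposition without proof, in keeping with its ``terse'' treatment of the length-two cells): read off $t_1,t_2$ and the vanishing of $A_{31}$ from the minor lemma of Friedberg, then extract the unipotent coordinates by applying $A=u_Lwtu_R$ to $\vec{e}_1,\vec{e}_2,\vec{e}_3$, and account for the one-parameter ambiguity via $\U_{s_\alpha s_\beta}$. Your intermediate identities $A_{21}=t_1$, $A_{32}=t_2$, $a_{12}=A_{11}/t_1$, $b_{23}=A_{33}/t_2$, $A_{22}=t_1b_{12}+t_2a_{23}$, $A_{12}=t_1a_{12}b_{12}+t_2a_{13}$ and $A_{23}=t_1b_{13}+A_{33}a_{23}$ are all correct, as is the identification of the residual freedom with $\U_w=\{e_\alpha(s)\}$ and the compensating shift $a_{23}\mapsto a_{23}-st_1/t_2$.

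The gap is in the last step, where you assert that eliminating $b_{12}$ ``produces the claimed expression'' for $a_{13}$. Carrying the elimination out does not give the printed formula. From $t_2a_{13}=A_{12}-A_{11}b_{12}$ and $b_{12}=(A_{22}-t_2a_{23})/t_1$ one gets
\[
a_{13}=\frac{A_{11}}{t_1}\,a_{23}-\frac{A_{11}A_{22}-A_{12}A_{21}}{t_1t_2}=\frac{A_{11}}{t_1}\,a_{23}-\frac{M_{33}}{t_1t_2},
\]
and symmetrically $b_{13}=b_{12}\tfrac{A_{33}}{t_2}-\tfrac{M_{11}}{t_1t_2}$, i.e.\ the minors enter divided by $t_1t_2=M_{13}$, not bare as in the statement. (A quick sanity check: with $t_1=2$, $t_2=3$, $a_{12}=a_{23}=b_{12}=b_{23}=1$, $a_{13}=2$ one finds $A_{11}=2$, $A_{22}=5$, $A_{12}=8$, so $M_{33}=-6$ and the printed formula would give $a_{13}=1-(-6)=7\neq 2$, while the corrected one gives $1-(-6)/6=2$.) So either the proposition as printed carries a typo --- the missing $M_{13}$ in the denominators --- or your proof must be amended; in either case the claim that your elimination reproduces the stated expressions is false as written, and you would have caught this by actually substituting rather than asserting that $M_{33}$ ``appears as the cross term.'' Everything else in the proposal (the cell description via the minor signature, the tying relation, and the $\U_w$ bookkeeping) is sound.
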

	The fact that there is a one parameter freedom in our Bruhat decomposition parameters is due to $\U_{s_\alpha s_\beta}$ being a one dimensional group. 
	
	We now factorize the elements in this group.

\begin{proposition}
Let $w = s_\alpha s_\beta = \bsm &&1\\ 1&&\\&1&\esm$.  We have the decomposition
\[
	BwB \cap \SL_3(\Z) = \bigcup_{\substack{(d_1,d_2) \in \Z^2\\d_1d_2 \neq 0}} \Omega_{w}(d_1,d_1d_2),
\]
and for every element of $\Omega_{w}(d_1,d_1 d_2)$ there is a $\Gamma_\infty$--double-coset representative that can be factored as $\iota_{\alpha}(\gamma_1) \iota_\beta(\gamma_2) $, where
\[
	\gamma_1 = \bpm x_1 & b_1\\ d_1 & y_1 \epm, \quad \gamma_2 = \bpm x_2 & b_2\\ d_2 & y_2 \epm
\]
are two $\SL_2$ matrices.

Finally we also note that 
\[
	BwB \cap \Gamma_0(N) = \bigcup_{\substack{d_1, d_2 \in \Z\\ N| d_2, d_1d_2 \neq 0}} \Omega_{w}(d_1,d_1d_2).
\]
\end{proposition}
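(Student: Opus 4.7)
The first claim will follow immediately from the preceding proposition on the Bruhat decomposition of $BwB \cap \SL_3(\Z)$, which identifies $t_1 = A_{21}$ and $t_2 = A_{32}$ as nonzero integers. Setting $d_1 := A_{21}$ and $d_2 := A_{32}$, we see $t(d_1, d_1 d_2) = \diag(d_1, d_2, 1/(d_1 d_2))$, so $A \in \Omega_w(d_1, d_1 d_2)$. The union is disjoint because $(d_1, d_2)$ are invariants of the $\Gamma_\infty$--double coset.

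For the factorization, I first verify the easy direction by expanding
\[
\iota_\alpha(\gamma_1) \iota_\beta(\gamma_2) = \bpm x_1 & b_1 x_2 & b_1 b_2 \\ d_1 & y_1 x_2 & y_1 b_2 \\ 0 & d_2 & y_2 \epm,
\]
which manifestly sits in $\Omega_w(d_1, d_1 d_2)$. For the converse, given $A \in \Omega_w(d_1, d_1 d_2) \cap \SL_3(\Z)$, I extract two arithmetic ingredients: primitivity of the first column $(A_{11}, d_1, 0)^T$ and of the last row $(0, d_2, A_{33})$ (both consequences of $\det A = 1$) forces $\gcd(A_{11}, d_1) = \gcd(A_{33}, d_2) = 1$; and expanding $\det A = 1$ along the third row and reducing modulo $g := \gcd(d_1, d_2)$ gives the pivotal congruence $A_{11} A_{22} A_{33} \equiv 1 \pmod g$. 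I then define $\gamma_1, \gamma_2 \in \SL_2(\Z)$ by setting $x_1 := A_{11}$, $y_2 := A_{33}$, and picking any integer representatives $y_1 \equiv x_1\inv \pmod{d_1}$ and $x_2 \equiv y_2\inv \pmod{d_2}$.

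It remains to construct $u_L, u_R \in \Gamma_\infty$ with $A = u_L \iota_\alpha(\gamma_1) \iota_\beta(\gamma_2) u_R$. This reduces to a linear system in the six integer parameters of $u_L, u_R$; four entries match automatically from the choices above, and the crucial $(2,2)$-equation takes the form $d\, d_1 + c\, d_2 = y_1 x_2 - A_{22}$, solvable in $\Z$ by B\'ezout exactly because of the pivotal congruence. The main obstacle will be showing integrality of the parameters $b$ and $e$ pinned down by the $(1,2)$, $(2,3)$, and $(1,3)$ matchings: this is verified by reducing the relevant expressions modulo $d_2$ and $d_1$, using the defining congruences $x_i y_i \equiv 1 \pmod{d_i}$ together with $\det A = 1$, along the same lines as the $w_0$-case computation in Proposition \ref{prop:BottSamelsonLikeDecomposition}. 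Finally, the level-structure claim is immediate: any $A \in BwB$ already satisfies $A_{31} = 0$, so $A \in \Gamma_0(N)$ reduces to the single condition $N \mid A_{32} = d_2$, yielding the claimed union.
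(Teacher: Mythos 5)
Your treatment of the stratification and of the $\Gamma_0(N)$ statement is correct and matches the paper's (with $A_{31}=0$ throughout the cell, the level condition collapses to $N\mid A_{32}=d_2$), and the ``pivotal congruence'' $A_{11}A_{22}A_{33}\equiv 1\pmod{\gcd(d_1,d_2)}$ is true and is indeed the invariant that makes your candidate $B=\iota_\alpha(\gamma_1)\iota_\beta(\gamma_2)$ land in the right double coset. The gap is in the middle. Writing $u_L=\bsm 1&a&b\\&1&c\\&&1\esm$, $u_R=\bsm 1&d&e\\&1&h\\&&1\esm$, your choices force $a=h=0$, and what remains is a \emph{coupled} system in $b,c,d,e$: besides the B\'ezout equation $d_1d+d_2c=A_{22}-y_1x_2$ you need $d_2\mid A_{12}-A_{11}d-b_1x_2$ (from the $(1,2)$ entry), $d_1\mid A_{23}-A_{33}c-y_1b_2$ (from the $(2,3)$ entry), and the $(1,3)$ entry to match. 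The B\'ezout step is the easy part; the real content is that the one-parameter family of B\'ezout solutions $(c,d)$ contains a point satisfying \emph{both} divisibility constraints — a compatibility-of-congruences problem in which $A_{11}$ need not be invertible mod $d_2$ nor $A_{33}$ mod $d_1$. This can be done (e.g.\ one checks $d_1(A_{12}-b_1x_2)\equiv A_{11}(A_{22}-y_1x_2)\pmod{d_2}$ using $\det\gamma_1=1$ and the bottom-row expansion of $\det A=1$, and then uses $\gcd(A_{11},d_1)=1$ to merge the two mod-$d_2$ congruences on $d$; a companion check handles $c$ mod $d_1$; and the $(1,3)$ consistency is automatic because $d_1d_2\bigl(A_{11}e+A_{33}b+b_1b_2-A_{13}\bigr)=\det\gamma_1\det\gamma_2-\det A=0$ once the other three equations hold), but none of this is in your write-up, and the appeal to ``the same lines as Proposition \ref{prop:BottSamelsonLikeDecomposition}'' does not cover it: that proof is a direct exterior-algebra identification of coordinates, not a congruence-solving argument.

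For contrast, the paper's route avoids the system entirely. Expanding $\det A=1$ shows that two complementary cofactors of $A$ are coprime, so a right multiplication by $\bsm 1&n&m\\&1&0\\&&1\esm$ can be used to kill the single obstructing minor $A_{12}A_{23}-A_{13}A_{22}$; once that minor vanishes, $A$ itself factors \emph{exactly}, with $\gamma_1=\bsm A_{11}&M_{21}\\A_{21}&M_{11}\esm$ and $\gamma_2=\bsm M_{33}&M_{32}\\A_{32}&A_{33}\esm$ read off from entries and cofactors of $A$, and the only verification is one matrix multiplication. If you keep your route, you must carry out the compatibility checks above; otherwise, normalizing the minor first is the shorter path.
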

\begin{proof}

We calculate that the effect of $\iota_\alpha(\gamma_1)\iota_\beta(\gamma_2)$ on standard basis elements of the exterior algebra,
\begin{align*}
	&&\vec{e}_{1} \mapsto x_1 \vec{e}_1 + d_1 \vec{e}_{2} && \vec{e}_{1,2} \mapsto x_2 \vec{e}_{1,2} + x_1 d_2 \vec{e}_{1,3} + d_1d_2 \vec{e}_{2,3}\\
	&&\vec{e}_{2} \mapsto x_2 b_1 \vec{e}_{1} + x_2 y_1 \vec{e}_{2} + d_2 \vec{e}_{3} && 
	\vec{e}_{1,3} \mapsto b_2 \vec{e}_{1,2} + y_2 x_1 \vec{e}_{1,3} + y_2 d_1 \vec{e}_{2,3} \\
	&&\vec{e}_{3} \mapsto b_1 b_2 \vec{e}_{1} + b_2 y_1 \vec{e}_{2} + y_2 \vec{e}_{3} && \vec{e}_{2,3} \mapsto b_1 \vec{e}_{1,3} + y_1 \vec{e}_{2,3}.
\end{align*}
There are two identities, $\langle \vec{e}_{3}^*, A\vec{e}_{1} \rangle = 0 $ and $\langle \vec{e}_{1,2}^*, A\vec{e}_{2,3} \rangle = 0$. The first one is satisfied by all the matrices in the Bruhat cell, whereas for the latter, i.e. for the minor $M_{31}$ to vanish, we may need to pass to possibly another $\Gamma_\infty$ representative. From the determinant condition we have that $- A_{23} M_{23} + A_{33}M_{33} = 1$, in particular $\gcd(M_{23},M_{33}) = 1$. This means that by multiplying with a suitable $\bsm 1& n & m \\ &1&  0 \\ && 1 \esm$ with $n,m \in \Z$ on the right, we can make sure $M_{31} = 0$.

Now that there is a hope of such factorization, we begin with the Ansatz that $A = \iota_\alpha(\gamma_1) \iota_\beta(\gamma_2)$ and calculate the consequences. Reading off the above table, we get that
\[
	\gamma_1 = \bpm A_{11} & M_{21} \\ A_{21} & M_{11} \epm \qquad \text{ and } \qquad 
	\gamma_2 = \bpm M_{33} & M_{32} \\ A_{32} &A_{33}  \epm. 
\]
Note that $\gamma_1, \gamma_2 \in \SL_2(\Z)$. Multiplying under the assumption that $\det(A) = 1$ and $M_{31} = 0$, gives us $A$ back. 

Finally we note that $A \in \Gamma_0(N)$ if and only if $N | A_{32}= d_2$, which gives the final statement of the proposition.
\end{proof}

From the $\det(A) = 1$ condition we have $A_{33}M_{33} \equiv 1 \pmod{A_{32}}$ and in the $\Gamma_\infty$--double cosets the elements $A_{33}, M_{33}$ are determined up to modulo $A_{32}$. Similarly $A_{11}$ and $M_{11}$ are determined up to modulo $A_{21}$ and expanding the determinant of $A$ along the first column, we get $A_{11} M_{11} \equiv 1 \pmod{A_{21}}$. The compatibility relation for $w$ is, $n_1 = d_1m_2/d_2$. We also have $A_{22} = M_{11} M_{33}$ given the assumption $A_{33}$ . The Kloosterman sum associated to $w = s_\alpha s_\beta$ is 
\[
	S_{s_\alpha s_\beta}(m_1,n_1,n_2; d_1,d_1d_2) =  \sideset{}{^*}\sum_{\substack{x_1 \pmod{d_1}\\ x_2 \pmod{d_2}}} e\left(m_1\frac{x_1}{d_1} + n_1\frac{\overline{x_1} x_2 }{d_1} + n_2\frac{\overline{x_2}}{d_2}\right).
\]
We use the same notation for this hyperkloosterman sum as in the literature, see \cite{BumpFriedbergGoldfeld, Friedberg1987, BlomerButtcaneMaga}, however we would like to make a point. This notation is not ideal, because it hides the condition $d_2n_1 = d_1m_2$; i.e. that $m_2$ is a multiple of $\gcd(d_1,d_2)$. Of course in the Bruggeman Kuznetsov formula this condition is always imposed. Let us note that the sum is well defined, and does not depend on a choice of $\overline{x_2} \pmod{d_2}$, since we are able to write it also in the form $ \sideset{}{^*}\sum_{\substack{x_1 \pmod{d_1}\\ x_2 \pmod{d_2}}} e\left(m_1\frac{x_1}{d_1} + m_2\frac{\overline{x_1} x_2 }{d_2} + n_2\frac{\overline{x_2}}{d_2}\right)$.

For the Weyl group element $w = s_\beta s_\alpha = \bsm &-1&\\ &&-1\\1&& \esm$ factorizing elements of $BwB \cap \SL_3(\Z)$ can be achieved using the same methods. We may also instead use the homomorphism $A \mapsto A^\dagger$ introduced in Section \ref{subsec:BraidRelation}. Then we can immediately say that this cell is determined by the equations $M_{13} = 0, M_{23} \neq 0$ and $M_{12} \neq 0$. Furhthermore given any $A \in BwB \cap \SL_3(\Z)$ satisfying $A_{13} = 0$ can be written as a product $\iota_\beta(\gamma_1) \iota_\alpha(\gamma_2)$ where we explicitly have
\[
	\gamma_1 = \bpm M_{33} & A_{23} \\ M_{23} & A_{33} \epm \qquad \text{ and } \qquad \gamma_2 = \bpm A_{11} & A_{12} \\ M_{12} & M_{11} \epm.  
\]
 The condition $A_{13}$ can be achieved since the determinant condition implies $A_{11}M_{11} - A_{12} M_{12} = 1$, and in particular $\gcd(A_{11}, A_{12}) = 1$. Thus multiplying by a certain $\Gamma_\infty$ element on the right we can obtain $A_{13} = 0$. 
 
 The Kloosterman sum associated to this Weyl group element is given as
 \[
  	S_{s_\beta s_\alpha } (m_1,m_2,n_1; d_1d_2, d_1)= 
  	\sideset{}{^*}\sum_{\substack{x_1 \pmod{d_1}\\ x_2 \pmod{d_2}}} e\left(m_1\frac{x_2 \overline{x_1}}{d_2}  + m_2\frac{x_1}{d_1} + n_1 \frac{\overline{x_2}}{d_2}\right), 
 \]
 with the condition that $d_2 n_2 = d_1 m_1$.  This can be also written in terms of the previous hyperkloosterman sum: $S_{s_\beta s_\alpha}(m_1,m_2,n_1;d_1d_2,d_1) = S_{s_\alpha s_\beta}(m_2, n_2, n_1; d_1,d_1d_2)$.

The $\Gamma_0(N)$ condition, is equivalent to $N| M_{23} = d_1$. On the one hand that $\gcd(A_{31}, A_{32} ) | M_{23}$ is clear. On the other hand, since $M_{13} = 0$, we have $A_{31} = M_{12}M_{23}$ and $A_{32} = M_{11}M_{23}$. Opening the determinant along the first row, gives us that $\gcd(M_{11}, M_{12}) = 1$, and thus we get that $M_{23} = \gcd(A_{31}, A_{32})$.

\section{Kuznetsov Trace Formula} \label{sec:Kuznetsov}

For the benefit of the reader we will write down the Kuznetsov trace formula using our parametrizations.

The Bruggeman-Kuznetsov Trace formula for $\SL_3$ has been written down in many sources, such as in \cite{LiSpectralMeanValue} which is specialized from the statement of the general $\GL_n$--Bruggeman-Kuznetsov formula in \cite[Chapter 11.6]{GoldfeldGLnRBook} (computed again by Xiaoqing Li, as stated in the beginning of that section), or also in \cite{buttcaneSpectral}. We will however follow the notation of \cite{BlomerButtcaneMaga}, as they also cover the congruence subgroup case.

Here $y = \diag(y_1y_2, y_1, 1)$.

\begin{theorem}
	Let $N, n_1,n_2,m_1, m_2 \in \mathbb{N}^{>0}$, and $F: (0,\infty)^2 \to \C$, a smooth compactly supported test function. Using the notation of \cite[Theorem 6]{BlomerButtcaneMaga}, the Bruggeman-Kuznetsov formula for the theorem may be written as 
	\[
		\int_{(N)} 	\frac{A_{\varpi}(n_1, n_2) \overline{A_{\varpi} (n_1,n_2)} }{\mathcal{N}(\varpi)} \left| \langle \tilde{W}_{\mu_\pi}, F \rangle  \right|^2 \d \varpi= \Delta + \Sigma_4 + \Sigma_5 + \Sigma_6 ,
	\]
	with
	\begin{align*}
		\Delta &= \delta_{n_1, m_1} \delta_{n_2 ,m_2} \|F \|^2 ,\\
		\Sigma_4 &= \sum_{\varepsilon = \pm 1} \sum_{\substack{d_1,d_2 > 0\\ d_2n_1 = d_1 m_2\\ N|d_2}} S_{s_\alpha s_\beta}(\varepsilon m_1, n_1, n_2;d_1,d_1d_2) \tilde{\mathcal{J}}_{\varepsilon, F} \left(\frac{\sqrt{n_1n_2m_1}}{d_1 \sqrt{d_2}} \right) ,\\
		\Sigma_5 &= \sum_{\varepsilon = \pm 1} \sum_{\substack{d_1,d_2 > 0\\ d_2n_2 = d_1m_1\\ N| d_1}} S_{s_\alpha s_\beta}(\varepsilon m_2,  n_2, n_1; d_1,d_1d_2) \tilde{\mathcal{J}}_{\varepsilon, F^*} \left(\frac{\sqrt{n_1n_2m_2}}{d_1 \sqrt{d_2}} \right), 
		\\
		\Sigma_{6} &= \sum_{\vec{\varepsilon} \in \{\pm 1\}^2} \sum_{\substack{d_1,d_2,f>0\\ N|f}} \S_{w_0}(\vec{m}^{\varepsilon}, \vec{n}; d_1,d_2,f)   \mathcal{J}_{\varepsilon,F} \left(\tfrac{\sqrt{n_2 m_1 d_1 }}{d_2 \sqrt{f}}, \tfrac{\sqrt{n_1 m_2 d_2}}{d_1 \sqrt{f}} \right).
	\end{align*}
	Here $\delta_{n,m}$ is the Kronekcer-$\delta$ function, $\vec{m}^\varepsilon = (\varepsilon_1 m_1, \varepsilon_2 m_2)$, and $F^*(y_1, y_2) = F(y_2,y_1)$.
\end{theorem}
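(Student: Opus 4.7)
The plan is to start from the $\SL_3$ Bruggeman-Kuznetsov formula in the form already established in \cite[Theorem~6]{BlomerButtcaneMaga} and to transcribe the geometric side into the parametrizations developed in this paper. The spectral side and the integral transforms are preserved; what needs to be verified is that the Kloosterman-sum terms and their index sets match. I would organize the translation one Weyl element at a time.

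First, for the long-word contribution $\Sigma_6$: in the standard form, the geometric sum runs over $(c_1,c_2)$ with $N \mid c_1$ and $N \mid c_2$, weighted by $S_{w_0}(\vec{m}^{\varepsilon},\vec{n}; (c_1,c_2))$. I would substitute the decomposition \eqref{eq:FineCoarseKloostermanDecomposition} and then use the theorem in the long-word section identifying the $\Gamma_0(N)$-intersected big Bruhat cell as the disjoint union of the sets $\Omega(d_1,d_2,f)$ with $N \mid f$. This replaces the double sum over $(c_1,c_2)$ by a triple sum over $(d_1,d_2,f)$ with $N \mid f$, and the original divisibility conditions on $(c_1,c_2)$ become automatic from $c_1 = d_1 f$ and $c_2 = d_2 f$. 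Evaluating the integral transform at these moduli recovers the stated arguments of $\mathcal{J}_{\varepsilon,F}$.

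Second, for the length-two contributions $\Sigma_4$ and $\Sigma_5$, I would use the factorization results for the non-long-word Bruhat cells. There it is shown that $BwB \cap \Gamma_0(N)$ for $w = s_\alpha s_\beta$ is a disjoint union of the sets $\Omega_w(d_1,d_1 d_2)$ with $N \mid d_2$, and that the associated Kloosterman sum equals the hyperkloosterman sum $S_{s_\alpha s_\beta}(m_1,n_1,n_2;d_1,d_1d_2)$, together with the compatibility relation $d_2 n_1 = d_1 m_2$ read off from \eqref{eq:consistencyConditions}. The $\Sigma_5$ term follows by applying the involution $A \mapsto A^\dagger$ from Section~\ref{subsec:BraidRelation}, which interchanges the roles of $\alpha$ and $\beta$ (and of the pairs $(d_1,\vec{m})$ and $(d_2,\vec{n})$). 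Finally the diagonal contribution $\Delta$ arises from the identity Weyl element and is unchanged.

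The main obstacle, such as it is, would be notational rather than conceptual: pinning down conventions for the signs $\varepsilon_i = \pm 1$ (which arise from passing between $\GL_3(\Z)$ and $\SL_3(\Z)$ coset representatives), the normalization of the test function $F$, and the precise form of the transforms $\mathcal{J}_{\varepsilon,F}$ and $\tilde{\mathcal{J}}_{\varepsilon,F}$ inherited from \cite{BlomerButtcaneMaga}. No new analytic input beyond Theorem~\ref{thm:KloostermanEvaluation} and the factorization propositions of the preceding sections is required.
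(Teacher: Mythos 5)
Your proposal is correct and follows essentially the same route as the paper: the theorem is obtained by transcribing the geometric side of \cite[Theorem 6]{BlomerButtcaneMaga} using the decomposition of $\Gamma_0(N)\cap Bw_0B$ into the sets $\Omega(d_1,d_2,f)$ with $N\mid f$ for $\Sigma_6$, and the factorizations of the length-two cells (with the conditions $N\mid d_2$, resp.\ $N\mid d_1$, and the involution $A\mapsto A^\dagger$) for $\Sigma_4$ and $\Sigma_5$, with no new analytic input. This matches the paper's intended justification exactly.
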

Here the higher rank analogues of the Bessel functions have been explicitly calculated as in \cite[(2.4), (2.5)]{BlomerButtcaneMaga}.
The Kloosterman sums relating to the long word element are weighted by the functions
\begin{multline*}
	\mathcal{J}_{\vec{\varepsilon}}(A_1, A_2) = \frac{1}{(A_1A_2)^2} \int\limits_{0}^\infty \int\limits_{0}^\infty \int\limits_{-\infty}^\infty \int\limits_{-\infty}^\infty \int\limits_{-\infty}^\infty e (-\varepsilon_1 A_1 x_1 y_1 - \varepsilon_2 A_2 x_2 y_2 ) \\
	\times e\left(-\frac{A_{2}}{y_2} \frac{x_1x_3 + x_2}{x_3^2 + x_2^2 + 1}\right) e\left(-\frac{A_1}{y_1} \frac{x_2(x_1 x_2 - x_3) + x_1}{(x_1x_2 -x_3)^2 + x_1^2 + 1}\right)
\overline{F(A_1 y_1 , A_2 y_2 )}\\
F\left(\frac{A_2}{y_2}\frac{\sqrt{(x_1 x_2 - x_3)^2  + x_1^2 + 1}}{x_3^2 + x_2^2 + 1},\frac{A_1}{y_1}\frac{\sqrt{x_3^2 + x_2^2 + 1}}{(x_1x_2 - x_3)^2 + x_1^2 + 1}\right) \d x_1 \d x_2 \d x_3 \frac{\d y_1 \d y_2}{y_1 y_2},
\end{multline*}
and the transform for the cyclical Weyl group elements are given by
\begin{multline*}
	\tilde{\mathcal{J}}_{\varepsilon, F^*} (A) = \frac{1}{A^2} \int\limits_{0}^\infty \int\limits_{0}^\infty \int\limits_{-\infty}^{\infty} \int\limits_{-\infty}^{\infty} e(−\varepsilon A x_1 y_1 )
	e\left(y_2 \frac{x_1 x_2}{x_1^2 + 1}\right) 
	e\left(\frac{A}{y_1y_2}\frac{x_2}{x_1^2 + x_2^2 + 1}\right)\\
	\times F\left(y_2 \frac{\sqrt{x_1^2 + x_2^2 + 1} }{x_1^2 + 1}, \frac{A}{y_1y_2}\frac{\sqrt{x_1^2 + 1}}{x_1^2 + x_2^2 + 1}\right)\overline{F(Ay_1, y_2)} \d x_1 \d x_2 \frac{\d y_1 \d y_2}{y_1 y_2^2}.
\end{multline*}
 
For the exact definitions of the terms on the spectral side, see \cite{BlomerButtcaneMaga}.

In the above theorem the authors Blomer, Buttcane and Maga have chosen to denote by $\int_{(N)} d̟\varpi$ a combined sum/integral
over the complete spectrum of level $N$. This is a rather terse notation, which goes against the spirit of this paper. However, if we were to write the spectral side of this formula explicitly, we would have had to introduce more automorphic notation than that is necessary considering the scope of this present paper. Furthermore we should add that writing a completely explicit spectral side of the $\Gamma_0(N)$--Bruggeman-Kuznetsov trace formula--akin to the $\SL_2$ case--is not a trivial task. If the reader would like to syntesize the explicit form of the left hand side from the literature for themselves, we direct them to the thesis of Balakci \cite{balakci}, where the ``\emph{cusps}'' of maximal parabolic subgroups of $\Gamma_0(N) \subseteq \SL_3(\Z)$ are parametrized.

\bibliographystyle{alpha}
\bibliography{biblio}

\end{document}